\newtheorem{defi}{Definition}
\newtheorem{theorem}[defi]{Theorem}
\newtheorem{remark}[defi]{Remark}
 \newtheorem{prop}[defi]{Proposition}
\newtheorem{lemma}[defi]{Lemma}
\newtheorem{cor}[defi]{Corollary}
\newcommand{\R}{\mathbb R}
\newcommand{\N}{\mathbb N}
\newcommand{\Z}{\mathbb Z}
\title[The Steklov spectrum and coarse discretizations]{The Steklov spectrum and coarse discretizations of manifolds with boundary}
\author{Bruno Colbois}
\address{Universit\'e de Neuch\^atel, Institut de Math\'ematiques, Rue
  Emile-Argand 11, CH-2000 Neuch\^atel, Switzerland}
\email{bruno.colbois@unine.ch}
\author{Alexandre Girouard}
\address{D\'e\-par\-te\-ment de math\'ematiques et de
sta\-tistique, Universit\'e Laval, Pavillon Alexandre-Vachon,
1045, av. de la M\'edecine,
Qu\'ebec Qc G1V 0A6, 
Canada }
\email{alexandre.girouard@mat.ulaval.ca}
\author{Binoy Raveendran}
\address{ Indian Institute of Technology, Kanpur, India.}
\email{binoy1729@gmail.com}
\date{\today}
\begin{document}

\begin{abstract}
Given $\kappa, r_0>0$ and $n\in\N$, we consider the class
$\mathcal{M}=\mathcal{M}(\kappa,r_0,n)$ of compact $n$-dimensional
Riemannian manifolds with cylindrical boundary, Ricci curvature
bounded below by $-(n-1)\kappa$ and injectivity radius bounded below
by $r_0$ away from the boundary. For a manifold $M\in\mathcal{M}$ we introduce a notion of
discretization, leading to a graph with boundary which is roughly
isometric to $M$, with constants depending only on $\kappa,r_0,n$. In
this context, we prove a uniform spectral comparison inequality
between the Steklov eigenvalues of a manifold $M\in\mathcal{M}$ and
those of its discretization. Some applications to the construction of
sequences of surfaces with boundary of fixed length and with
arbitraritly large
Steklov spectral gap $\sigma_2-\sigma_1$ are given. In particular, we obtain such a
sequence for surfaces with connected boundary. The applications are
based on the construction of graph-like surfaces which are obtained from
sequences of graphs with good expansion properties. 
\end{abstract}

\maketitle


\section{Introduction}
\label{section:introduction}
Let $M$ be a smooth compact Riemannian manifold of dimension $n\geq 2$, with
smooth boundary $\Sigma=\partial M$. 
The
\emph{Steklov problem} on $M$ is to find all $\sigma\in\mathbb{R}$ for
which there exists a non-zero function $u$ such that
\begin{equation*}
\label{stek}
\begin{cases}
  \Delta u=0& \mbox{ in } M,\\
  \partial_\nu u=\sigma \,u& \mbox{ on }\Sigma,
\end{cases}
\end{equation*}
where $\Delta$ is the Laplace-Beltrami operator acting on functions on
$M$, and $\partial_\nu$ is the outward normal derivative along the
boundary $\Sigma$. 
It is standard that the Steklov spectrum is discrete:
$$0=\sigma_1(M)\leq\sigma_2(M)\leq\cdots\nearrow\infty.$$

Our goal in this paper is to understand the interplay between the Steklov
eigenvalues $\sigma_j(M)$ and large scale geometric features of the space $M$. 
Discretization methods which are classical in the context of the Laplace operator are developed in the context of manifolds with boundary. 
In order to single out large scale
phenomena, we restrict our attention to a class of manifolds with
bounded geometry. Throughout the paper, we therefore assume the
existence of constants $\kappa\geq0$ and $r_0\in (0,1)$ such
that 
\begin{itemize}
\item[H1)] The boundary $\Sigma$ admits a neighbourhood
which is isometric to the cylinder $[0,1]\times\Sigma$, with the
boundary corresponding to $\{0\}\times\Sigma$;
\item[H2)] The Ricci curvature of $M$ is bounded below by $-(n-1)\kappa$;
\item[H3)] The Ricci curvature of $\Sigma$ is bounded below by $-(n-2)\kappa$;
\item[H4)] For each point $p\in M$ such that $d(p,\Sigma)>1$, $\mbox{inj}_M(p)>r_0;$
\item[H5)] For each point $p\in \Sigma$, $\mbox{inj}_\Sigma(p)>r_0.$
\end{itemize}
The class of compact $n$-dimensional Riemannian manifolds with smooth boundary satisfying these hypotheses
is denoted $\mathcal{M}=\mathcal{M}(\kappa, r_0,n).$ 

\begin{remark}\label{remark:naturalC}
  The conditions defining the class $\mathcal{M}$ are natural. Indeed we
  use methods from \cite{cha2} on the boundary $\Sigma$ of manifolds
  $M\in\mathcal{M}$ which require curvature and injectivity lower
  bounds. The product structure near the boundary is used to avoid
  situations where the Steklov eigenvalues become very small and could
  not be detected through a discretization. For instance, it is known
  that if two boundary components become close to each others, then
  each $\sigma_j$ tends to zero. This could happen for a
  very short cylinder (see Lemma 6.1 of \cite{ceg2}) or for a surface
  which has a "thin passage" as described by Figure 3 of \cite[Section
    4]{gpsurvey}).  
    It is also of interest to study the effect of drastic perturbations of the the Riemannian metric away from the boundary. This is the subject of a forthcoming paper \cite{ceg3}. The above conditions ensure that none of these situations will occur for manifolds in $\mathcal{M}$.
\end{remark}


\subsection{Discretization and spectral comparison}
Our aim is to study spectral properties of manifolds in the
class $\mathcal{M}$ up to rough isometries.
\begin{defi}\label{definition:roughisometry}
  A \emph{rough isometry} between two metric spaces $X$ and $Y$ is a
  map $\Phi:X\rightarrow  Y$ such that, there exist constants $a\geq
  1, b\geq 0, \tau \geq 0$ satisfying 
  \begin{gather}\label{equation:roughisometry}
    a^{-1}d(x_1,x_2) - b \leq d(\Phi(x_1), \Phi(x_2)) \leq a\,d(x_1,x_2) + b
  \end{gather}
  for every $x_1, x_2 \in X$ and which satisfies 
  $$\bigcup_{x\in X} B(\Phi(x), \tau) = Y.$$
\end{defi}
Given $\epsilon\in (0,r_0/4)$, we will define a discretization procedure
\begin{center}
  $\mathcal{M}(\kappa, r_0,n)\xrightarrow{\hspace{2cm}}$ Graphs with boundary
\end{center}
such that the $\epsilon$-discretization $\Gamma_M$ of a manifold $M$ is roughly
isometric to $M$ with constants $a,b,\tau$ controlled in terms of the
geometric constraints defining the class $\mathcal{M}(\kappa, r_0,n)$.
A graph with boundary is simply a graph $\Gamma=(V,E)$ with a
distinguished set of vertices $B\subset V$ that is treated as a
boundary. In Section \ref{section:roughlyisometricgraphs} we will
introduce a natural notion of Steklov spectrum on graphs with
boundary:
$$0=\sigma_1(\Gamma,B)\leq\sigma_2(\Gamma,B)\leq\cdots\leq\sigma_k(\Gamma,B),$$
where $k=|B|$ is the number of vertices in the boundary.

Our main goal is to establish a relation between the Steklov eigenvalues of
$M$ and the Steklov eigenvalues of the discretization of $M$.
\begin{theorem}\label{theorem:MainSpectralComparison}
  Given $\epsilon\in(0,r_0/4)$, there exist numbers $a,b>0$ depending on $\kappa,r_0,n$ and $\epsilon$ such that
  any $\epsilon$-discretization $(\Gamma_M,V_\Sigma)$ of a manifold 
  $M\in\mathcal{M}(\kappa,r_0,n)$ satisfies	
  \begin{gather}\label{inequality:main1}
  a<\frac{\sigma_2(M)}{\sigma_2(\Gamma,V_\Sigma)}<b.
  \end{gather}	
Moreover, if $\sigma_k(M)$ is small enough, a similar result holds for
for each $k\le |V_\Sigma|$: there exists a constant $C>0$ (which
depends at most on $\kappa,\epsilon,n$) such that, if $\sigma_k(M) \le
C/k$, then 
\begin{gather}\label{inequality:main2} 
   a<\frac{\sigma_k(M)}{\sigma_k(\Gamma,V_\Sigma)}<b.
\end{gather}
Without this last hypothesis, the following weaker estimate holds for each $k\le |V_\Sigma|$:
\begin{gather}\label{inequality:main3} 
   \frac{a}{k} <\frac{\sigma_k(M)}{\sigma_k(\Gamma,V_\Sigma)}<b.
 \end{gather}
\end{theorem}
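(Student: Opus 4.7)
The plan is to prove both directions of the comparison \eqref{inequality:main1}--\eqref{inequality:main3} via the Rayleigh--Ritz min--max characterization of Steklov eigenvalues, by transferring test functions back and forth between $M$ and $\Gamma_M$ along two quasi-isometric linear maps
\[ P\colon C^\infty(M)\to\R^{V(\Gamma_M)},\qquad E\colon\R^{V(\Gamma_M)}\to H^1(M), \]
whose construction relies on the bounded-geometry hypotheses H1--H5. The map $P$ is defined by averaging: $(Pu)(v)=\fint_{B(v,\epsilon)}u$ for interior vertices $v$ and $(Pu)(v)=\fint_{B_\Sigma(v,\epsilon)}u$ for boundary vertices $v\in V_\Sigma$. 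The extension $E$ is built from a partition of unity subordinate to the cover of $M$ by balls of radius $2\epsilon$ centred at the vertices, with the boundary partition of unity adapted to the cylindrical collar $[0,1]\times\Sigma$ from H1. I would first check that both maps distort norms and energies by factors bounded only in terms of $\kappa,r_0,n,\epsilon$; this uses Bishop--Gromov volume comparison, the Croke-type lower volume bound at small scales, and the Buser--Poincaré inequality on balls (the latter relying on H2, H4 in the interior and H3, H5 on the boundary, following \cite{cha2}).

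The core quantitative step is a local Poincaré inequality at scale $\epsilon$: for each vertex $v$ and each $u\in H^1(M)$,
\[ \int_{B(v,\epsilon)} |u-(Pu)(v)|^2 \,dV \le C\,\epsilon^2 \int_{B(v,2\epsilon)} |\nabla u|^2\,dV, \]
with an analogous estimate on the boundary. Combining this over all pairs of neighbouring vertices yields the two crucial comparisons
\[ \|P u\|_{E(\Gamma)}^2 \le C_1 \int_M |\nabla u|^2\,dV,\qquad \|P u\|_{\ell^2(V_\Sigma)}^2 \asymp \|u\|_{L^2(\Sigma)}^2, \]
where the second equivalence uses the collar H1 to average $u$ over boundary cylinders without significant loss. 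Symmetric statements hold for $E$. Feeding these into the min--max principle gives both inequalities of \eqref{inequality:main1} for $k=2$ immediately, since the first two eigenfunctions (constants and the first nonconstant mode) have $L^2$-behaviour that is automatically controlled.

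For general $k$, the delicate point is to pass from the Dirichlet--Neumann type comparison to a Rayleigh-quotient comparison that is tight. Writing $R_M(u)=\int_M|\nabla u|^2/\int_\Sigma u^2$ and its discrete analog $R_\Gamma(f)$, the map $P$ sends a test subspace $F\subset C^\infty(M)$ of dimension $k$ to $P(F)\subset\R^{V(\Gamma)}$; the key is that $P|_F$ is injective and that $\|Pu\|_{\ell^2(V_\Sigma)}^2\ge c\|u\|_{L^2(\Sigma)}^2$ uniformly on $F$. The standard obstacle, and this is where the $1/k$ loss of \eqref{inequality:main3} enters, is that high-frequency eigenfunctions oscillate on scales comparable to or finer than $\epsilon$, so the $L^2(\Sigma)$-norm can drop under averaging by a factor as bad as the dimension of the eigenspace. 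Under the assumption $\sigma_k(M)\le C/k$, however, a global $L^\infty$--$L^2$ bound on the boundary traces of the first $k$ Steklov eigenfunctions (obtained from a Moser--Nash iteration on $\Sigma$ and the trace estimate in the collar) shows that the family behaves tamely enough for $P$ to be quasi-isometric on the test subspace, yielding \eqref{inequality:main2}. Without this smallness, one still obtains \eqref{inequality:main3} by absorbing the $\sqrt k$-type loss through a pigeonhole argument on the eigenspaces of $\Gamma$.

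The hardest step, and the one I would spend the most effort on, is the fine $L^\infty$--$L^2$ control on eigenfunction traces that makes the threshold $\sigma_k(M)\le C/k$ the exact breakpoint between the sharp and weak estimates: this requires combining a boundary Sobolev-type inequality (available thanks to H3 and H5) with the identity $\int_\Sigma u\,\partial_\nu u = \sigma \int_\Sigma u^2$ for eigenfunctions, and carefully iterating so that the constant depends only on $\kappa,r_0,n,\epsilon$ and not on the manifold $M$ itself.
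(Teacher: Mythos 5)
Your overall framework---a discretization operator and a smoothing/extension operator with Rayleigh quotients transferred back and forth---is indeed the one the paper uses (its $D$ and $S$ are your $P$ and $E$). But there is a genuine gap in the claimed energy comparison for $P$, namely the estimate $\|Pu\|_{E(\Gamma)}^2\le C_1\int_M|\nabla u|^2$. The discrete Dirichlet energy of $Pu$ includes edges joining pairs of \emph{boundary} vertices, and since $P$ averages boundary vertices over balls in $\Sigma$, those edge-contributions are controlled by the tangential boundary energy $\int_\Sigma|\nabla^\Sigma u|^2$, not by $\int_M|\nabla u|^2$. For an arbitrary $u\in H^1(M)$ the ratio $\|\nabla^\Sigma u\|^2_\Sigma/\|\nabla u\|^2_M$ is unbounded (take $u$ harmonic with $u|_\Sigma$ a Laplace eigenfunction of $\Sigma$ with eigenvalue $\lambda$: then $\|\nabla u\|_M^2\sim\sqrt\lambda$ while $\|\nabla^\Sigma u\|_\Sigma^2\sim\lambda$), so no local Poincar\'e inequality can give a $k$-independent constant $C_1$ for general test functions. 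The paper circumvents exactly this by exploiting the cylindrical collar H1: Lemma~\ref{lemma:FourierStuff} uses the Fourier decomposition of Steklov eigenfunctions on $[0,1]\times\Sigma$ to prove $\|\nabla^\Sigma F\|_\Sigma^2\le\tfrac12\|\nabla F\|_M^2$ when $\sigma<1/4$, and Corollary~\ref{coro:linearcombiFourrierStuff} extends this to linear combinations of the first $k$ eigenfunctions at the cost of a factor $k$ (a Cauchy--Schwarz loss: distinct eigenfunctions are $L^2(\Sigma)$-orthogonal but their boundary gradients are not). This factor $k$, together with the injectivity requirement $k\sigma_k(M)<\text{const}$ in Lemma~\ref{lemma:discretizationInjective}, is the actual source of both the threshold $\sigma_k(M)\le C/k$ in~\eqref{inequality:main2} and the $a/k$ loss in~\eqref{inequality:main3}.

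Your proposed remedy---Moser--Nash iteration to obtain an $L^\infty$--$L^2$ bound on eigenfunction traces---targets a different and less central difficulty (whether the boundary $L^2$-norm survives the averaging $P$) and does not touch the tangential-gradient bottleneck above, nor does it obviously produce $C/k$ as the exact breakpoint. Your heuristic that the $1/k$ loss arises because high eigenfunctions oscillate at scales $\lesssim\epsilon$ is likewise not how the loss arises in the paper: it is a purely algebraic Cauchy--Schwarz loss in Corollary~\ref{coro:linearcombiFourrierStuff}, present even at a fixed mesh $\epsilon$. Relatedly, your stated equivalence $\|Pu\|^2_{\ell^2(V_\Sigma)}\asymp\|u\|^2_{L^2(\Sigma)}$ cannot hold unconditionally, as you yourself note two sentences later; the paper only establishes the lower bound on a $k$-dimensional test space under the smallness hypothesis. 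To repair your proposal you would need, at minimum, a lemma bounding the boundary Dirichlet energy of linear combinations of low Steklov eigenfunctions by the bulk Dirichlet energy, and the collar structure H1 is what makes such a lemma available.
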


\subsubsection*{Comments and discussion}

Coarse discretizations have been used for a long time in spectral
geometry of the Laplace operator on closed Riemannian manifolds. They
were used by Buser \cite{buser1} to construct compact
hyperbolic surfaces with large area and uniformly positive
$\lambda_2$. They were 
also used by Brooks \cite{brooks1} to study the first non-zero eigenvalue of
towers of covering.  For the eigenvalues of the Laplace operator, a
result similar to our Theorem \ref{theorem:MainSpectralComparison}
appeared in the work of Mantuano  \cite{mantuano}. Our proof will be
in the same spirit, but some serious technical difficulties occur
because of the important role 
played by the boundary. As in \cite{mantuano}, many of the tools that
we use are from Chavel's book \cite{cha2}, particularly from Section
VI.5.

It is important not to confuse this type of discretization with those
used in numerical analysis. Our goal is to dicretize in a coarse
sense, which is not sensitive to the local geometry. In particular,
the interesting applications of our method are performed using a fixed
value of the ``mesh parameter'' $\epsilon\in(0,r_0/4)$. It is worth observing that any
compact manifold is roughly isometric to a point. The emphasis is on
the control of the constants $a,b,\tau$. In fact, if we let
$\epsilon\rightarrow 0$, the control of the constants is lost: the
manifold is not approximated in a better way for smaller values of
$\epsilon$. Note also that in the present context any $\epsilon$-discretization is a finite graph.

\subsection{Applications and examples}
We will give three applications of our method to the construction of
sequences of surfaces with large Steklov eigenvalues. Each of these
are in the same spirit: a graph $G=(V,E)$ will describe a pattern to be
used in the construction of a corresponding surface
$\Omega_G$. Roughly speaking, a finite set $D_1,\cdots,D_m$ of
\emph{fundamental pieces} is given. These are used to build a surface
by associating a copy of one of the $D_i$'s to each vertex, and the
graph structure prescribes the pattern to follow for gluing the
various fundamental pieces together. It is often natural to expect the
geometric and spectral properties of the initial graph to be related
to those of its induced surface. We will consider a sequence of graphs
which displays some "spectral expansion" and show using our
discretization results (Theorem \ref{theorem:MainSpectralComparison}
and Proposition \ref{proposition:spectrumRoughlyIsomGraphs}) how to
transplant these expansion properties to the induced surfaces. This
method is classical. See \cite{cg1, colbmat, brooks1}. Related methods
were also introduced in \cite{post}.

\subsubsection*{Application 1} 
The following surprising fact follows from Theorem 1.3 of \cite{ceg2}:
let $\Omega_l \subset \mathbb R^n$ be a sequence of domains with
smooth boundary $\Sigma_l$, with $l\in\N$. If $n \ge 3$ and if the
isoperimetric ratio
$$I(\Omega_l):=\frac{\mbox{Vol}_{n-1}(\Sigma_l)}{\mbox{Vol}_n(\Omega_l)^{\frac{n-1}{n}}}$$
tends to $\infty$, then the normalized Steklov eigenvalues
$\sigma_k(\Omega_l)\mbox{Vol}_{n-1}(\Sigma_l)^{1/(n-1)}$ tend to $0$ as $l\nearrow\infty$.
We will prove that the condition $n\geq 3$ is necessary.

\begin{theorem}\label{theorem:planardomains} 
There exists a sequence of planar domains $\Omega_l \subset \mathbb R^2$ with
smooth boundary $\Sigma_l$, with $l\in\N$, such that
\begin{enumerate}
\item
The isoperimetric ratio $I(\Omega_l) \to \infty$ as $l \to \infty$;

\item
There exists a constant $C>0$ (independant of $l$), such that for each
$l$, $\sigma_2(\Omega_l)|\Sigma_l| \ge C$.
\end{enumerate}
\end{theorem}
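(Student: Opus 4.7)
I will construct $\Omega_l\subset\mathbb R^2$ as a ``Swiss-cheese'' domain: the disk of radius $5l$ centred at the origin, from which a square grid of $\Theta(l^2)$ smooth unit disks with mutual centre-distance~$10$ has been removed. Then $\Omega_l$ has one smooth outer boundary circle and $\Theta(l^2)$ smooth inner boundary circles; a direct computation gives $|\Omega_l|\asymp l^2$ and $|\Sigma_l|\asymp l^2$, hence
\begin{equation*}
I(\Omega_l)=\frac{|\Sigma_l|}{\sqrt{|\Omega_l|}}\asymp l\longrightarrow\infty,
\end{equation*}
establishing (1).

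For (2), the plan is to apply Theorem~\ref{theorem:MainSpectralComparison}. After a uniformly bi-Lipschitz modification of the Euclidean metric on a fixed-width collar of each boundary component (replacing the polar form $dr^2+r^2d\theta^2$ by the cylindrical product $dt^2+d\theta^2$), the resulting manifold $\tilde\Omega_l$ lies in $\mathcal M(\kappa,r_0,2)$ with $\kappa,r_0$ independent of $l$: hypotheses (H1)--(H5) hold because the ambient geometry is flat and the boundary components are either nearly straight (the outer circle of radius $5l$) or of fixed unit scale (the inner holes, which are all congruent so the patch is a single local model). The modification changes Steklov eigenvalues and boundary lengths by at most a uniform multiplicative factor, so it suffices to prove the spectral estimate on $\tilde\Omega_l$ and transfer back.

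The $\epsilon$-discretization $(\Gamma_l,V_\Sigma)$ is, up to uniform rough isometry, a planar graph with $\Theta(l^2)$ interior grid vertices of mesh $\sim\epsilon$ and $\Theta(l^2)$ boundary vertices organised into $\sim l^2$ small cycles of size $\sim 2\pi/\epsilon$, one per hole. I would establish $\sigma_2(\tilde\Omega_l)\asymp l^{-2}$ as follows. The upper bound comes from the test function $v(p)=x(p)-0$, suitably centred so that $\int_{\Sigma_l} v=0$: its Dirichlet energy is $\asymp l^2$, while the boundary integral $\int v^2$ picks up $\asymp l^4$ from the grid of holes, yielding Rayleigh quotient $\asymp l^{-2}$. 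The matching lower bound $\sigma_2\gtrsim l^{-2}$ follows from the Escobar--Jammes Cheeger-type inequality applied to the boundary measure, since every splitting of $\Omega_l$ into two pieces of comparable boundary mass $\asymp l^2$ must cross the ambient disk along an interior curve of length $\gtrsim l$. As $\sigma_2(\tilde\Omega_l)\le C/2$ for large~$l$, inequality (\ref{inequality:main2}) transfers the estimate through the discretization to give $\sigma_2(\Omega_l)\asymp l^{-2}$, and combined with $|\Sigma_l|\asymp l^2$ we obtain $\sigma_2(\Omega_l)\,|\Sigma_l|\geq C>0$ uniformly in~$l$, proving (2).

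The main obstacle is the quantitative Steklov Cheeger-type lower bound in this ``dense-boundary'' regime, where $|\Sigma_l|$ is comparable to $|\Omega_l|$: one must verify that cuts separating $\Theta(l^2)$ unit-scale boundary components into two balanced halves really cost an interior length $\gtrsim l$, and that the resulting Cheeger inequality matches on both the continuous and discrete sides so that Theorem~\ref{theorem:MainSpectralComparison} genuinely applies. A secondary technical point is the uniform bi-Lipschitz cylindrical modification of the metric near the $\Theta(l^2)$ boundary components; since all holes are congruent this reduces to a single local patch, but the smooth gluing to the Euclidean interior must be controlled independently of~$l$.
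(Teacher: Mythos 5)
Your construction (a disk of radius $5l$ with a $10$-spaced grid of unit holes removed) is close in spirit to the paper's puzzle domain built from cross-shaped fundamental pieces, and your verification of (1) is fine. But your argument for (2) has a genuine gap exactly where you flag ``the main obstacle'': the lower bound $\sigma_2(\tilde\Omega_l)\gtrsim l^{-2}$ is asserted via an Escobar--Jammes Cheeger-type inequality that is never verified. To make it work you would need \emph{both} the classical Cheeger constant $h(\tilde\Omega_l)$ and Jammes' isoperimetric constant $h_J(\tilde\Omega_l)=\inf_A |\partial A\cap\Omega|/|\bar A\cap\Sigma|$ to be $\gtrsim l^{-1}$, and then invoke $\sigma_2\geq \tfrac14 h_J\, h$; proving these two bounds for the Swiss-cheese geometry (ruling out, e.g., cuts that weave efficiently between many holes) is a nontrivial isoperimetric estimate that the proposal leaves entirely unaddressed. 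Moreover, your invocation of Theorem~\ref{theorem:MainSpectralComparison} is structurally confused: if you really had $\sigma_2(\tilde\Omega_l)\asymp l^{-2}$ by a continuous Cheeger argument, then only the quasi-isometry stability (Proposition~\ref{proposition:quasiisocontrol}) is needed to transfer it to $\Omega_l$, and the discretization theorem plays no role. Conversely, Theorem~\ref{theorem:MainSpectralComparison} is only useful if you can compute or estimate $\sigma_2(\Gamma_l,V_\Sigma)$ on the discrete side, which you never attempt.

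The paper's route avoids the Cheeger estimate altogether by a decisive choice you miss: the surface is built so that the discretization is roughly isometric to the finite square lattice graph $G_l$ with boundary set $B_l=V_l$, i.e.\ \emph{every} vertex is a boundary vertex. By Remark~\ref{remark:fullvertices}, the discrete Steklov eigenvalues then coincide with the combinatorial Laplacian eigenvalues, and $\lambda_2(G_l)=8\sin^2(\pi/l)\asymp l^{-2}$ is explicitly known. Proposition~\ref{proposition:spectrumRoughlyIsomGraphs} and Theorem~\ref{theorem:MainSpectralComparison} then transfer this to $\sigma_2(\Omega_l)\gtrsim l^{-2}$ with uniform constants. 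The geometric design of the fundamental pieces (every piece carries a portion of the free boundary) is what makes $B_l=V_l$ the right model graph; with your isolated round holes spaced $10$ apart, the natural comparison graph does \emph{not} have all its vertices on the boundary, and the discrete Steklov eigenvalue is not a plain Laplacian eigenvalue, so it is not obviously computable. If you want to salvage your approach, you should either carry out the Cheeger computation carefully (in which case drop the discretization machinery and use only Proposition~\ref{proposition:quasiisocontrol}), or redesign the domain so that the discretization graph has a known or easily bounded $\sigma_2$, as the paper does.
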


\subsubsection*{Application 2}
In Theorem 2 of \cite{ceg2}, it was shown that if $\Omega \subset
\mathbb R^n$ is a domain with smooth boundary $\Sigma$, then
\begin{gather}\label{inequality:ceg2}
  \sigma_k(\Omega) \mbox{Vol}_{n-1}(\Sigma)^{1/(n-1)} \le C(n) k^{2/n},
\end{gather}
where $C(n)$ is a positive constant depending only on the dimension $n$.
As the domains under consideration are Euclidean, it is a natural
question to decide wether or not a similar estimate holds for more
general flat Riemannian manifolds. The following Theorem shows that it
is not the case.
\begin{theorem}\label{theorem:LargeLambdaOneFlat}
  There exists a sequence $\{\Omega_l\}_{l\in\mathbb{N}}$ of compact
  flat Riemannian surfaces with boundary $\Sigma_l$
  and a constant $C>0$ (independant of $l$) such that for each $l\in\mathbb{N}$,
  $\mbox{genus}(\Omega_l)=1+l,$ and
  $$\sigma_2(\Omega_l)L(\Sigma_l)\geq Cl.$$
\end{theorem}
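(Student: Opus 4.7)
The plan is to construct $\Omega_l$ as a \emph{graph-like surface} modeled on a sequence of expander graphs and then transfer the combinatorial spectral gap to the manifold side using Theorem~\ref{theorem:MainSpectralComparison}.

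I would start with a sequence $\{G_l\}$ of $4$-regular expander graphs on $l$ vertices (for example, Ramanujan graphs). Form the augmented graph $\widehat G_l$ by attaching to each vertex $v$ of $G_l$ a single new pendant vertex $b_v$ joined to $v$ by one edge; this yields a graph with boundary $(V(\widehat G_l),B_l)$, where $B_l=\{b_v:v\in V(G_l)\}$ has size $l$. From the expansion of $G_l$, a standard discrete Cheeger-type argument for the Dirichlet-to-Neumann operator on $B_l$ gives $\sigma_2(\widehat G_l,B_l)\ge c$ uniformly in $l$.

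For the geometric model, fix once and for all a flat fundamental piece $D$: take a round Euclidean disk with four symmetrically placed smaller disks removed, and attach a flat cylindrical collar of length $1$ to each of the five resulting boundary circles (all chosen of the same length $\ell$), so H1 is satisfied along every boundary circle. Build $\Omega_l$ by assigning a copy $D_v$ of $D$ to each vertex $v\in V(G_l)$; for each edge $e=\{v,w\}\in E(G_l)$, isometrically glue a unit-length flat cylinder $C_e$ between one boundary collar of $D_v$ and one of $D_w$, using four of the five circles of each $D_v$ for incident edges and leaving the fifth free as a component of $\Sigma_l$. The surface is flat and smooth across the gluings, and an Euler-characteristic count gives
\[
\chi(\Omega_l)=-3l,\qquad \mathrm{genus}(\Omega_l)=1+l,\qquad L(\Sigma_l)=l\ell,
\]
with $\Omega_l\in\mathcal{M}(0,r_0,2)$ for some $r_0$ depending only on the fixed piece $D$.

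Now I would build a natural $\epsilon$-discretization of $\Omega_l$ that is roughly isometric, with constants independent of $l$, to the augmented graph $\widehat G_l$: place a bounded number of vertices in each $D_v$ and along each $C_e$, with boundary vertices on the free circles. Proposition~\ref{proposition:spectrumRoughlyIsomGraphs} then yields that $\sigma_2(\Gamma_{\Omega_l},V_{\Sigma_l})$ and $\sigma_2(\widehat G_l,B_l)$ are comparable with uniform constants; combining this with Theorem~\ref{theorem:MainSpectralComparison} gives $\sigma_2(\Omega_l)\ge c'>0$ uniformly in $l$, so $\sigma_2(\Omega_l)L(\Sigma_l)\ge c'\ell\,l=Cl$. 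The main obstacle is keeping all constants uniform in $l$: one must choose $D$ and the gluings so that H1--H5 hold with $\kappa,r_0$ independent of $l$, ensuring that the constants $a,b$ in Theorem~\ref{theorem:MainSpectralComparison} do not deteriorate, and that the rough-isometry parameters between $\Gamma_{\Omega_l}$ and $\widehat G_l$ are uniformly bounded. A secondary difficulty is the discrete Steklov lower bound $\sigma_2(\widehat G_l,B_l)\ge c$: one must check that classical expander estimates survive the transformation from the Laplacian on a closed graph to the Dirichlet-to-Neumann problem on a graph with pendant boundary.
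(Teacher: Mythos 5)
Your overall strategy — modelling the $\Omega_l$ on a $4$-regular expander family, discretizing, and transporting the spectral gap via Theorem~\ref{theorem:MainSpectralComparison} and Proposition~\ref{proposition:spectrumRoughlyIsomGraphs} — is the paper's strategy. But the construction of the building block $D$ cannot work as stated, and this is a genuine gap. You claim that $\Omega_l$ is simultaneously flat \emph{and} a member of $\mathcal{M}(0,r_0,2)$ with $\chi(\Omega_l)=-3l$. These are incompatible: condition H1 forces a collar isometric to $[0,1]\times\Sigma_l$, so every boundary component is a closed geodesic ($k_g\equiv 0$); if also $K\equiv 0$, Gauss--Bonnet gives
\[
2\pi\chi(\Omega_l)=\int_{\Omega_l}K\,dA+\int_{\Sigma_l}k_g\,ds=0,
\]
contradicting $\chi(\Omega_l)=-3l<0$. (Equivalently, a flat surface with nonempty geodesic boundary is an annulus.) Concretely, the flat cylindrical collars you attach to the round boundary circles of the Euclidean disk-with-four-holes create a jump in geodesic curvature ($0$ versus $1/r$) along each gluing circle, so the metric you describe is not even $C^2$ there, much less flat.

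The paper sidesteps this by decoupling the two requirements. The building block is the genuinely flat \emph{flat cross} (a planar domain as in Definition~\ref{defi:planfunpiece}); its straight $\Gamma$-edges are geodesics, so the gluings prescribed by the edges of $G_l$ produce a smooth flat surface $\Omega_l$, whose boundary $\Sigma_l$ is the union of the remaining $\gamma$-arcs. This $\Omega_l$ is flat but \emph{not} in $\mathcal{M}$; Lemma~\ref{lemma:quasiproductfundpiece} and Corollary~\ref{coro:puzzlequasi} then furnish a companion metric in $\mathcal{M}(\kappa,r_0,2)$ quasi-isometric to the flat one with constant independent of $l$, and Proposition~\ref{proposition:quasiisocontrol} transfers the spectral bound back to the flat metric. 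Your proof needs this quasi-isometry step inserted, and a flat building block with geodesic straight-segment gluings rather than circular ones; the Gauss--Bonnet obstruction above shows no fix that keeps your circular-collar piece can work.

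Two side remarks. The paper does not augment $G_l$ with pendant boundary vertices; it takes $B=V(G_l)$, so by Remark~\ref{remark:fullvertices} the discrete Steklov spectrum is the graph Laplacian spectrum and the expander bound applies directly. Your $\widehat G_l$ is a valid alternative — the discrete harmonic extension across pendant edges can be solved in closed form, giving $\sigma_k(\widehat G_l,B_l)=\lambda_k(G_l)/\bigl(1+\lambda_k(G_l)\bigr)$, so the ``secondary difficulty'' you flag is in fact an easy identity — but it is not what the paper does.
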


\begin{remark}
One could use the present method to give an alternative proof of
Theorem 1 from \cite{cg1}, or a version of Theorem
\ref{theorem:LargeLambdaOneFlat} for surfaces of constant curvature
$-1$.
\end{remark}

\subsubsection*{Application 3}

In \cite{cg1} two of the authors constructed surfaces modelled on
regular graphs and developped an ad hoc spectral comparison
inequality which allowed the construction of a sequence of surfaces
$\Omega_l$ with boundary $\Sigma_l$ such that
\begin{gather*} 
  \lim_{l\rightarrow\infty}\sigma_2(\Omega_l)L(\Sigma_l)=+\infty.
\end{gather*}
This sequence also satisfies
$\lim_{l\rightarrow\infty}\mbox{genus}(\Omega_l)=+\infty$, which is a
necessary condition since it is known \cite{kokarev1} that 
$$\sigma_2(\Omega)L(\Sigma_l)\leq 8\pi(\mbox{genus}(\Omega)+1).$$
In the construction proposed in \cite{cg1}, the number of boundary
components of the surface $\Omega_l$ is also proportionnal to $l$. It is natural
to ask if we can reduce the number of boundary components.
By studying this construction in the context of discretizations, we
will prove that it is possible to choose each $\Omega_l$ to have
exactly one boundary component. 
\begin{theorem}\label{theorem:LargeLambdaOneConnectedBdr}
  There exist a sequence $\{\Omega_l\}_{l\in\mathbb{N}}$ of compact
  surfaces with connected boundary and a constant $C>0$ such that
  for each $l\in\mathbb{N}$, $\mbox{genus}(\Omega_l)=1+l,$ and
  $$\sigma_2(\Omega_l)L(\partial\Omega_l)\geq Cl.$$
\end{theorem}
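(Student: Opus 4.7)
The plan is to adapt the graph-modelled construction of \cite{cg1} within the ribbon graph (fat graph) framework so as to produce surfaces with a single boundary component, and then to invoke Theorem \ref{theorem:MainSpectralComparison} to reduce the Steklov estimate to an expansion property of the underlying graphs.

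First I would take a sequence of $3$-regular expander graphs $G_l=(V_l,E_l)$ with $|V_l|$ of order $l$ and uniform spectral gap $\lambda_2(G_l)\geq \lambda_0>0$. On each $G_l$ I would choose a rotation system (cyclic ordering of the edges at each vertex) realizing a \emph{one-face embedding} in an orientable surface, which is possible by classical maximum-genus results. The corresponding ribbon surface, built by glueing a flat disk to each vertex and a flat rectangle of fixed width to each edge according to the rotation, has exactly one boundary circle and, by Euler's formula, genus $(|E_l|-|V_l|+1)/2$; adjusting $|V_l|$ one can arrange this to equal $1+l$. After smoothing the gluings and attaching a flat cylinder $[0,1]\times S^1$ along the unique boundary component (to ensure H1), the resulting surface $\Omega_l$ lies in $\mathcal{M}(\kappa,r_0,2)$ for constants $\kappa,r_0$ depending only on the chosen model pieces, with $L(\partial\Omega_l)$ of order $l$.

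Fix $\epsilon\in(0,r_0/4)$ and let $(\Gamma_l,V_\Sigma)$ be the corresponding $\epsilon$-discretization. Theorem \ref{theorem:MainSpectralComparison} then provides a constant $a>0$ independent of $l$ such that $\sigma_2(\Omega_l)\geq a\,\sigma_2(\Gamma_l,V_\Sigma)$. The theorem thus reduces to a uniform discrete lower bound $\sigma_2(\Gamma_l,V_\Sigma)\geq C_0>0$, which combined with $L(\partial\Omega_l)\geq c\,l$ yields the desired estimate $\sigma_2(\Omega_l)L(\partial\Omega_l)\geq Cl$.

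The main obstacle is this uniform discrete bound. The graph $\Gamma_l$ is, up to bounded combinatorial perturbation, a subdivision of $G_l$: each graph vertex is replaced by a blob of $O(1)$ vertices, each edge by a chain of $O(1)$ vertices, flanked by two rows of boundary vertices which close up into a single cycle along $V_\Sigma$. Given a Steklov eigenfunction $u$ on $\Gamma_l$ orthogonal to constants on $V_\Sigma$, I would produce a function $\bar u$ on $V_l$ by averaging $u$ over the vertices associated with each graph-vertex of $G_l$, and show that its combinatorial Dirichlet energy on $G_l$ is controlled by the Rayleigh quotient of $u$, while $\|\bar u\|_{\ell^2(V_l)}$ is controlled from below in terms of $\|u\|_{\ell^2(V_\Sigma)}$ using that the boundary cycle wraps uniformly over the edges of $G_l$. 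The uniform expansion $\lambda_2(G_l)\geq \lambda_0$ then forces a positive lower bound on the Rayleigh quotient of $u$. This discrete comparison plays the role of the ad hoc spectral inequality of \cite{cg1}, now performed entirely within the graph framework thanks to the discretization machinery.
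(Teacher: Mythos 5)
Your topological route to a connected boundary is genuinely different from the paper's. The paper works with a $4$-regular expander $G_l$ and four \emph{carved} fundamental pieces: it views $\Omega_l$ as a closed surface $S_l$ built from copies of the genus-$0$, four-holed piece $M_0$, embeds a maximal spanning tree $T\subset G_l$ in $S_l$, and removes a regular neighbourhood of $T$; since a tree is a topological disc, this leaves exactly one boundary curve, and using four fixed carved pieces (one per degree that a vertex can have inside $T$) keeps the local geometry uniform. You instead take a one-face (maximum-genus) embedding of a cubic expander via a rotation system and build the corresponding ribbon surface. Both strategies produce surfaces from $O(1)$ model pieces with a single boundary curve of length $\asymp l$, so both feed correctly into the machinery. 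Your version is arguably more transparent topologically, while the paper's spanning-tree trick has the advantage that it works for \emph{any} connected expander with bounded degree without appealing to extremal embedding results.

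That said, one step in your proposal needs attention: the claim that one can always choose a rotation system realizing a one-face embedding is not true for an arbitrary cubic graph. Upper embeddability is a nontrivial constraint (Xuong's characterization in terms of spanning trees whose co-tree has components with an even number of edges). You must ensure your expander family is upper embeddable; this is fixable, e.g., by using $4$-edge-connected expanders (which are always upper embeddable) or by invoking explicit families known to admit one-face embeddings, but as written it is a gap.

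For the final spectral step you propose an ad hoc averaging argument from $\Gamma_l$ down to $G_l$. This is precisely the kind of comparison the paper's Proposition~\ref{proposition:spectrumRoughlyIsomGraphs} encapsulates: the $\epsilon$-discretization $(\Gamma_l,V_\Sigma)$ is roughly isometric, with uniform constants, to the graph with boundary $(G_l,V(G_l))$, and $\sigma_2(G_l,V(G_l))=\lambda_2(G_l)\geq\lambda_0$. Invoking Proposition~\ref{proposition:spectrumRoughlyIsomGraphs} (as the paper does, following the template of Theorem~\ref{theorem:planardomains}) is cleaner than re-proving the averaging estimate by hand; in particular it spares you the delicate point that the averaged $\bar u$ must stay far from constant, which in your sketch is only asserted. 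I would replace the last paragraph with the rough-isometry argument and cite Proposition~\ref{proposition:spectrumRoughlyIsomGraphs} directly.
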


\begin{remark}
The surfaces that we construct in the
above three applications are not necessarily in the class
$\mathcal{M}$, but they are uniformly quasi-isometric to such
manifolds. 
For instance, if a sequence of manifolds $M_l$ is replaced
by manifolds $X_l$ which are quasi-isometric to $M_l$ with the same
constants $a,b$ for each $l\in\N$, then $\sigma_2(M_l)$ tends to 0 if and only if $\sigma_2(X_l)$ does. This will play a crucial role in Section
\ref{section:applications}.
\end{remark}

\subsection{Notations}
Some of the constants appearing in various results will have to be
reused later on. These are numbered successively as
$C_1,C_2,\cdots$. Each $C_j$ is used precisely once in the
paper. These constants 
can depend on the bounds $\kappa, r_0$ and on the dimension $n$ and
parameter $\epsilon>0$. This dependance will not be stated explicitely
each time. 
The symbol $\strokedint$ is used for the averaging operator on its
domain. Given a function $F\in C^\infty(M)$, we write 
$$\|F\|_\Sigma:=\|F\mid_\Sigma\|_{L^2(\Sigma)}$$
We will write $\nabla^\Sigma$ for the gradient operator on $C^\infty(\Sigma)$ and also
$$\|\nabla^{\Sigma}F\|_\Sigma:=\|\nabla^{\Sigma}(F\mid_\Sigma)\|_{L^2(\Sigma)}.$$
When the volume form is clear from the context, it will be omitted.
Given a graph $\Gamma=(V,E)$, the set of all real valued functions on
the vertices $V$ is written $\ell^2(V)$, understood with its natural $\ell^2$
inner product.

\subsection{Plan of the paper}
In Section \ref{section:discretization} we introduce a coarse
discretization of manifolds in the class $\mathcal{M}(\kappa,r_0,n)$
and study its basic properties. This is followed in Section
\ref{section:roughlyisometricgraphs} by the introduction of  Steklov
eigenvalues for graphs with boundary, and a spectral comparison for
roughly isometric graphs is proved in Proposition
\ref{proposition:spectrumRoughlyIsomGraphs}. In the next section
several tools are introduced: a comparison inequality for the
Dirichlet energy of a function and its restriction to the boundary
(Lemma \ref{lemma:FourierStuff}), a local Poincaré inequality on
cylinders (Lemma \ref{lemma:kanaiCylinder}). This is followed by the
introduction of discretization of smooth functions, and the smoothing
of discrete functions. In Section \ref{section:proofmaincomparison}
the proof of the main comparison inequality (Theorem
\ref{theorem:MainSpectralComparison}) is presented. Finally, in the
last section we present the three applications to surfaces with large
Steklov eigenvalue $\sigma_2$.

\subsection{Acknowledgements}
We would like to thank Iosif Polterovich for useful conversations during the preparation of this manuscript. The third author was supported by
\emph{Swiss National Science Foundation} (Proposal no. 200020\_149261). AG acknowledges the support of both \emph{National Sciences and Engineering Research Council of Canada} and \emph{Fonds de recherche du Québec- nature et technologies} for the duration of this project.

\section{Discretization of compact manifolds with boundary}
\label{section:discretization}

In this section, the geometric discretization of the manifold $M$ is introduced. Because we are considering the Steklov problem, the boundary plays a crucial role. It is therefore natural that the discretization will lead to a \emph{graph with boundary}.
\begin{defi}
  A graph with boundary is a pair $(\Gamma, B)$ where $\Gamma=(V,E)$
  is a graph and $B\subset V$ is a distinguised set of vertices. 
  The path-distance on $V$ is defined as follows: given
  $x,y\in V$, the distance $d_{\Gamma}(x,y)$ is the length of the
  shortest path between $x$ and $y$, where two adjacent vertices are
  at distance $1$. There is a natural graph structure on $B$ defined
  by $E_B \subset E$, where $e \in E_B$ iff the edge $e$ joins two vertices of
  $B$. However, on the graph $(B,E_B)$, we consider the extrinsic
  distance $d_{\Gamma}(x,y)$.
\end{defi}

Given $0<\epsilon<r_0/4$, let $V_\Sigma$ be a maximal
$\epsilon$-separated set in $\Sigma$. Let $V_\Sigma'$ be a copy of
$V_\Sigma$ located $4\epsilon$ away from the boundary:
\begin{gather*}
  V_\Sigma':=\{4\epsilon\}\times V_\Sigma\subset M.
\end{gather*}
Let $V_I$ be a maximal $\epsilon$-separated set in $M\setminus
[0,4\epsilon)\times \Sigma$ such that $V_\Sigma'\subset V_I.$ (See Figure~\ref{fig:discretization}.)
\begin{figure}[ht]
  \centering
  \includegraphics[width=6cm]{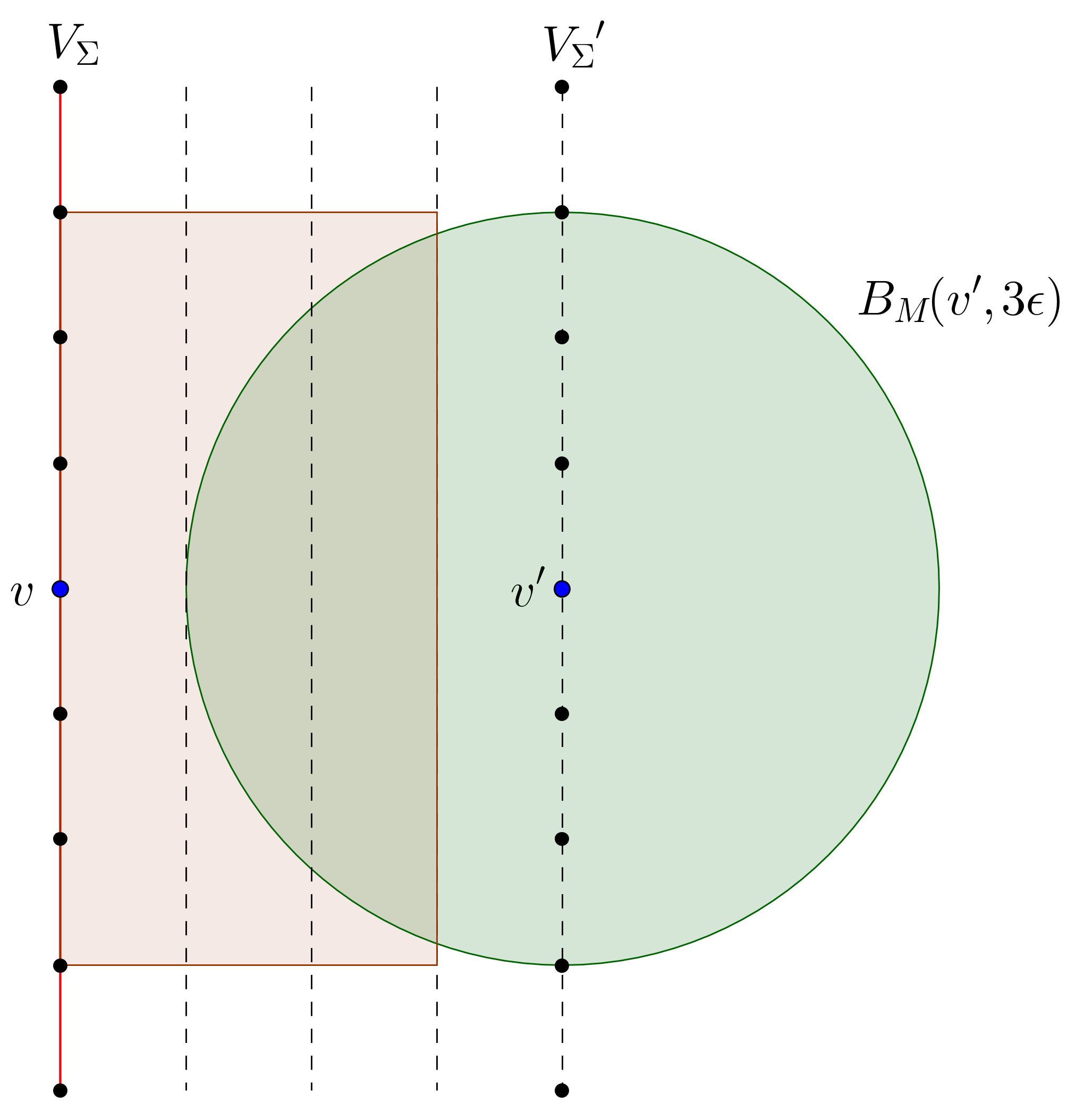}
  \caption{The discretization close to the boundary}
  \label{fig:discretization}
\end{figure}
The set $V=V_\Sigma\cup V_I$ is
given the structure of a graph $\Gamma$  by declaring
\begin{itemize}
\item any two $v,w\in V$  adjacent whenever
  $d_M(v,w)<3\epsilon$;
\item any $v\in V_\Sigma$ adjacent to $v'=(4\epsilon,v)\in
  V_\Sigma'\subset V_I$.
\end{itemize}
The graph $\Gamma=(V,E)$ together with boundary $B=V_\Sigma$ is called an \emph{$\epsilon$-discretization of $M$}. The path-distance on
$\Gamma$ is $d_\Gamma$.

\begin{remark}
  The
  key features of this discretization are:
  \begin{itemize}
  \item The regions $C_v:=[0,3\epsilon)\times B_\Sigma(v,3\epsilon)$ and $B_M(v,3\epsilon)$ (which are shaded in Figure~\ref{fig:discretization}) have a large
    enough intersection. This will be important in the proof of
    Lemma~\ref{lemma:discretizationenergybound}, which controls the
    energy of the discretization of a function.
  \item   Because the interior vertices are separated by a strip of
  width $4\epsilon$ from the boundary,
  $B_M(v,4\epsilon)\cap\Sigma=\emptyset$ for each $v\in V_I$. This
  will allow the use of Kanai's inequality for functions 
  $f\in C^\infty(B_M(v,4\epsilon))$ (See Lemma \ref{lemma:kanaiCylinder}).
  \item The interaction between the boundary of the graph and its
    interior is simple, and it reflects the product structure of the
    manifold $M$. This will lead to the existence of a suitable
    partition of unity, in Section~\ref{section:smoothing}.
  \end{itemize}
\end{remark}

\begin{remark}
  It follows from the Bishop--Gromov theorem that the degree
  of each vertex   $v\in V$ is bounded above in terms of
  $\kappa$ and $\epsilon$. The total number of vertices
  is also be controlled. See \cite[p. 147]{cha2}. 
\end{remark}

\begin{lemma}\label{lemma:discretizationsRoughlyIso}
  For any $0<\epsilon<r_0/4$, and any $\epsilon$-discretization $(\Gamma,V_\Sigma)$ of $M$, the natural inclusion $V\subset M$ is a rough isometry. Indeed, the following stronger estimate holds:
  $$\frac{\epsilon}{4}d_\Gamma(x,y)-10\leq d_M(x,y)\leq 4\epsilon d_\Gamma(x,y).$$
\end{lemma}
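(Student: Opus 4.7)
The upper bound is the easier direction. Every edge of $\Gamma$ connects a pair of vertices at $M$-distance at most $4\epsilon$: direct edges satisfy $d_M(v,w)<3\epsilon$ by construction, while each boundary edge joining $v\in V_\Sigma$ with $v'=(4\epsilon,v)\in V_\Sigma'$ satisfies $d_M(v,v')=4\epsilon$ via the vertical path in the product collar. Concatenating along a shortest graph path between $x$ and $y$ produces a curve in $M$ of length at most $4\epsilon\,d_\Gamma(x,y)$.

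For the lower bound I would first reduce to the case $x,y\in V_I$. If $x\in V_\Sigma$ then $x^*:=(4\epsilon,x)\in V_\Sigma'\subset V_I$ satisfies $d_\Gamma(x,x^*)=1$ and $d_M(x,x^*)=4\epsilon$; performing this replacement at both endpoints costs at most two edges in $\Gamma$ and at most $8\epsilon$ in $M$-distance, so it suffices to bound $d_\Gamma(x^*,y^*)$ for endpoints lying in $V_I$ (i.e.\ at $M$-distance at least $4\epsilon$ from $\Sigma$).

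The geometric cornerstone is the following claim: a minimizing curve $\gamma:[0,L]\to M$ between two points at distance at least $4\epsilon$ from $\Sigma$ cannot enter the open collar $(0,4\epsilon)\times\Sigma$. Indeed, any such incursion would yield a subarc of $\gamma$ lying entirely in the cylindrical neighborhood $[0,1]\times\Sigma$ with both endpoints on $\{4\epsilon\}\times\Sigma$, say at $(4\epsilon,\sigma_a)$ and $(4\epsilon,\sigma_b)$. In the product metric $dt^2+g_\Sigma$, the distance between these two points equals $d_\Sigma(\sigma_a,\sigma_b)$, realized uniquely by the horizontal $\Sigma$-geodesic at $t=4\epsilon$. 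Any path that dips strictly below $t=4\epsilon$ has strictly larger product length than $d_\Sigma(\sigma_a,\sigma_b)$, contradicting minimality of $\gamma$ since the horizontal curve is an admissible competitor in $M$.

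Granted this fact, $\gamma$ lies entirely in $M\setminus[0,4\epsilon)\times\Sigma$, where $V_I$ is $\epsilon$-dense by construction. Sampling $\gamma$ at uniform step $\epsilon/4$, we get points $p_0=x^*,p_1,\ldots,p_N=y^*$ with $N\le\lceil 4L/\epsilon\rceil$ and $d_M(p_i,p_{i+1})\le\epsilon/4$. Choose $v_i\in V_I$ nearest to $p_i$ (with $v_0=x^*$, $v_N=y^*$); then $d_M(v_i,v_{i+1})\le\epsilon+\epsilon/4+\epsilon=9\epsilon/4<3\epsilon$, so successive vertices are adjacent in $\Gamma$. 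This gives $d_\Gamma(x^*,y^*)\le 4\,d_M(x^*,y^*)/\epsilon+1$, and unwinding the reduction yields $d_\Gamma(x,y)\le 4\,d_M(x,y)/\epsilon+35$. Since $\epsilon<r_0/4<1/4$, the slack $35\epsilon/4$ is bounded by $10$, and rearrangement produces the announced $\frac{\epsilon}{4}d_\Gamma(x,y)-10\le d_M(x,y)$. The principal obstacle is the geometric fact that minimizing geodesics between high-altitude points avoid the open collar; everything else is a routine sampling argument.
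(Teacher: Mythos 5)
Your proof is correct and follows essentially the same route as the paper's: concatenation of short paths for the upper bound, and for the lower bound, reduction to endpoints in $V_I$, the observation that a minimizing curve between such points avoids the open collar $(0,4\epsilon)\times\Sigma$, and a sampling argument using the $\epsilon$-density of $V_I$ in $M\setminus[0,4\epsilon)\times\Sigma$. Your reflection argument for collar avoidance (replacing a dipping subarc by the horizontal $\Sigma$-geodesic at height $4\epsilon$) is more explicit than the paper's terse appeal to a geodesic being ``trapped,'' and working with a minimizing curve rather than presupposing a smooth geodesic is cleaner on a manifold with boundary. Your step $\epsilon/4$ is smaller than the paper's (which samples at step roughly $\epsilon$) and therefore gives a worse linear coefficient ($4/\epsilon$ instead of $1/\epsilon$), but the additive constant $35\epsilon/4<10$ still fits the stated inequality, so the conclusion follows; the smaller step also gives the strict inequality $9\epsilon/4<3\epsilon$ needed for adjacency without invoking the strictness of the $\epsilon$-density, which is a small robustness gain. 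The one thing you omit is the coarse-density requirement $\bigcup_{v\in V}B(v,\tau)=M$ from Definition~\ref{definition:roughisometry}: the two-sided distance estimate alone does not make the inclusion $V\hookrightarrow M$ a rough isometry. The paper closes its proof by verifying this with $\tau=4\epsilon$, using the maximality of $V_I$ in $M\setminus[0,4\epsilon)\times\Sigma$ together with the placement of $V_\Sigma$ and $V_\Sigma'$ to cover the collar; you should add this short check to complete the claim.
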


\begin{proof}
Let $x,y\in V$ with $d_{\Gamma}(x,y)=k$. This means that there exists a sequence of vertices
$$x=x_0,x_1,...,x_k=y \in V$$ 
with $d_{\Gamma}(x_i,x_{i+1})=1$ which represents a shortest path between $x$ and $y$.
By construction of the discretization, we have $d_M(x_i,x_{i+1})\le 4\epsilon$, and by compactness, there exists a path on $M$ between $x_i$ and $x_{i+1}$ of length $\le 4\epsilon$. By concatenation of these paths, we find a path joining $x$ and $y$ on $M$ whose length $\le 4\epsilon k$. This implies
$$
d(x,y)\le 4 \epsilon k=4\epsilon d_{\Gamma}(x,y),
$$
which completes the proof of the second inequality.

To prove the left-hand-side inequality, consider $x,y\in V$ with $d(x,y)=\alpha$. There are three cases to consider.

\medskip
\noindent
\textbf{Case 1}: $x,y \in V_I$, that is $x$ and $y$ are not points of the boundary $B=V_{\Sigma}$ of the graph $\Gamma$.
Because of the product structure near the boundary, there exists a geodesic parametrised by arc length
$$
\gamma:[0,\alpha] \rightarrow M
$$ 
between $x$ and $y$ on $M$ whose length is $\alpha$. It follows from convexity that 
$$Im(\gamma)\subset M\setminus [0,4\epsilon)\times \Sigma:$$
a geodesic entering $[0,4\epsilon)\times \Sigma$ would be trapped.

Let $k\in\N$ be such that $k-1 < \frac{\alpha}{\epsilon} \le k$ and define $t_i=\frac{i\alpha}{k}$, $i=0,...,k$.
and $x_i=\gamma(t_i) \in M$. Note that $x_0,x_k \in V$, but that this is in general not the case for the other points $x_i$. However, by construction, $\bigcup_{x\in V_I} B(x, \epsilon) = M\setminus
[0,4\epsilon)\times \Sigma$ so that there exist $y_0=x_0,y_1,...,y_{k-1},y_k=y \in V$ with $d(y_i,x_i) \le \epsilon$ and
$$
d(y_{i+1},y_i) \le d(y_{i+1},x_{i+1})+d(x_{i+1},x_{i})+d(y_{i},x_{i}) \le 3 \epsilon
$$
so that $y_i$ and $y_{i+1}$ are connected in $\Gamma$ and $d_{\Gamma}(y_i,y_{i+1})\le 1$.
It follows that
$$
d_{\Gamma}(x,y)\le k\le (\frac{\alpha}{\epsilon}+1) =\frac{1}{\epsilon} d(x,y)+1.
$$

\medskip
\noindent
\textbf{Case 2}: $x \in B=V_{\Sigma}$ and $y\not \in B$: this means that there is exactly one point $x_1 \in V_{\Sigma}'$ which is connected to $x_0=x$ and $d_{\Gamma}(x_0,x_1)=1$.
Now, $x_1$ and $y$ are as in the first step, so that we have
\begin{align*}
d_{\Gamma}(x,y)&\le d_{\Gamma}(x,x_1)+d_{\Gamma}(x_1,y)\\
&\le 1+\frac{1}{\epsilon} d(x_1,y)+1\\
&\le 2+\frac{1}{\epsilon}(d(x_1,x)+d(x,y))=2+\frac{1}{\epsilon}(4\epsilon+d(x,y))=6+\frac{1}{\epsilon}d(x,y).
\end{align*}

\medskip
\noindent
\textbf{Case 3}: $x,y \in B=V_{\Sigma}$: we do as in the second step and get 
$$
d_{\Gamma}(x,y) \le 10+\frac{1}{\epsilon}d(x,y).
$$

\smallskip
\noindent
End of the proof: we have to show that $\bigcup_{x\in V} B(x, 4\epsilon) = M$. We already know that this is true for points in $M\setminus [0,4\epsilon)\times \Sigma$ because $V_I$ is a maximal separated set in this space. 

If $x\in [0,4\epsilon)\times \Sigma$, there is $y \in \Sigma$ or $y \in \{4\epsilon\} \times \Sigma$ with $d(x,y)\le 2\epsilon$. By construction, there is a point $z\in V_{\Sigma}$ in the first case, or a point $z\in V_{\Sigma}'$ in the second case such that $d(y,z)\le \epsilon$, and we deduce that $d(z,x) \le 4 \epsilon$.
   
\end{proof}

\section{Spectrum of roughly isometric graphs}
\label{section:roughlyisometricgraphs}

Let $(\Gamma,B)$ be a graph with boundary. The \emph{Dirichlet energy}
of a function 
$f:V\rightarrow\mathbb{R}$ is  
$$q(f):=\sum_{v\sim w}\bigl(f(v)-f(w)\bigr)^2.$$
To simplify notations, we will write $\|f\|_B$ for $\|f\mid_B\|_{\ell^2(B)}$. Similarly, 
$$q_B(f):=\sum_{v\sim w, v,w\in B}\bigl(f(v)-f(w)\bigr)^2.$$

\begin{defi}\label{defi:steklovgraph}
For each $j=1,\cdots, |B|$, the $j$-th \emph{Steklov eigenvalue} of
$(\Gamma,B)$ is defined by
\begin{gather}\label{discretesteklov:variational}
  \sigma_j(\Gamma,B)=\min_E\max_{f\in E}
  \frac{q(f)}{\|f\|_{B}^2}.
\end{gather}
where the minimum is over all $j$-dimensional linear subspaces $E$ of
$\ell^2(V)$.
\end{defi}
In particular, $\sigma_1(\Gamma,B)=0$ is realized by locally constant functions on $V$.

\begin{remark}\label{remark:fullvertices}
  Note that for $B=V$, $\sigma_j=\lambda_j(L)$ is the $j$-th
  eigenvalue of the graph Laplacian $L$. See \cite{mantuano}.
\end{remark}

The notion of rough isometry between metric spaces was presented in Definition \ref{definition:roughisometry}. A specialized version will be useful.
\begin{defi}
  A rough isometry $\Phi$ between two graphs with boundary
  $(\Gamma_1,B_1)$ and $(\Gamma_2,B_2)$ is a rough isometry of the
  underlying graphs which sends $B_1$ to $B_2$. In other words, the restriction of $\Phi$ to $B_1$ is a rough isometry $B_1\rightarrow B_2$ when considering extrinsic distances on $B_1$ and $B_2$.
\end{defi}

The easiest situation in which spectral comparison occur is for rough
isometries between graphs. This will be useful for applications in Section~\ref{section:applications}.
\begin{prop}\label{proposition:spectrumRoughlyIsomGraphs}
  Given $a\geq 1$ and $b,\tau\geq 0$, 
  there exist constants $A,B$ depending only on $a,b,\tau$ and on the maximal degree of vertices, such that
  any two graphs with boundary $(\Gamma_1,B_1)$ and $(\Gamma_2,B_2)$
  which are roughly isometric (through $\Phi$) with constants $a,b,\tau$ satisfies
  $$
  A \leq \frac{\sigma_k(\Gamma_1,B_1)}{\sigma_k(\Gamma_2,B_2)} \leq B
  $$
  for each $k \leq \min\{|B_1|,|B_2|\}$.
\end{prop}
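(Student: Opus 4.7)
The plan is a transplantation argument in the spirit of Kanai, Brooks and Mantuano, adapted to the Steklov setting. I fix a rough isometry $\Phi:(\Gamma_1,B_1)\to(\Gamma_2,B_2)$ with constants $a,b,\tau$ and pull back test functions by $\Phi^*f := f\circ\Phi \in \ell^2(V_1)$. The whole argument reduces to two comparison estimates, with constants depending only on $a,b,\tau$ and the maximal degree $\Delta$:
\begin{gather*}
q_{\Gamma_1}(\Phi^*f) \le C_1\, q_{\Gamma_2}(f), \\
\|f\|_{B_2}^2 \le C_2\,\|\Phi^*f\|_{B_1}^2 + C_3\, q_{\Gamma_2}(f).
\end{gather*}

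To establish the first inequality: if $v\sim w$ in $\Gamma_1$ then $d_{\Gamma_2}(\Phi(v),\Phi(w))\le a+b$, so I fix a $\Gamma_2$-path $\Phi(v)=y_0,\dots,y_L=\Phi(w)$ of length $L\le a+b$, and Cauchy--Schwarz gives $(\Phi^*f(v)-\Phi^*f(w))^2 \le L\sum_i (f(y_i)-f(y_{i+1}))^2$. Summing over edges of $\Gamma_1$, each $\Gamma_2$-edge is reused only boundedly often, because the lower bi-Lipschitz estimate together with bounded degree forces the contributing vertices $v$ into a $\Gamma_1$-ball whose radius depends only on $a,b,\tau$. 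For the second inequality, density provides for every $y\in B_2$ a companion $x(y)\in B_1$ with $d_{\Gamma_2}(y,\Phi(x(y)))\le\tau$; joining $y$ and $\Phi(x(y))$ by a $\Gamma_2$-path of length $\le\tau$ and applying Cauchy--Schwarz yields $f(y)^2 \le 2\Phi^*f(x(y))^2 + 2\tau\sum_i(f(y_i)-f(y_{i+1}))^2$, and summing over $y\in B_2$ with the analogous multiplicity bound gives the claimed estimate.

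I then apply the min-max principle. Let $E_2\subset\ell^2(V_2)$ be the span of the first $k$ Steklov eigenfunctions of $(\Gamma_2,B_2)$, so every $f\in E_2$ satisfies $q_{\Gamma_2}(f)\le\sigma_k(\Gamma_2)\|f\|_{B_2}^2$. If $\sigma_k(\Gamma_2) < 1/(2C_3)$ the boundary estimate absorbs the error: $\|\Phi^*f\|_{B_1}^2 \ge \|f\|_{B_2}^2/(2C_2)$. In particular $\Phi^*|_{E_2}$ is injective, so $\Phi^*E_2$ is $k$-dimensional, and the energy estimate gives $\sigma_k(\Gamma_1) \le 2C_1C_2\, \sigma_k(\Gamma_2)$. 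In the complementary regime $\sigma_k(\Gamma_2)\ge 1/(2C_3)$ I invoke the universal upper bound $\sigma_k(\Gamma,B) \le 3\Delta$: extending the boundary trace $g=u|_B$ of a Steklov eigenfunction $u$ by zero off $B$ yields a test function with energy at most $q_B(g)+\sum_{v\in B}(\deg v)\,g(v)^2 \le 3\Delta\|g\|_B^2$, and Dirichlet minimality closes the bound; the resulting ratio is then at most $6C_3\Delta$. The reverse inequality $\sigma_k(\Gamma_1)/\sigma_k(\Gamma_2) \ge A$ will follow by symmetry, applied to a quasi-inverse $\Psi:(\Gamma_2,B_2)\to(\Gamma_1,B_1)$ of $\Phi$ built vertex-by-vertex with $\Psi(B_2)\subset B_1$; this $\Psi$ is itself a rough isometry of graphs with boundary, with constants depending only on $a,b,\tau$.

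The main obstacle will be the additive Dirichlet-energy error $C_3\,q_{\Gamma_2}(f)$ in the boundary norm comparison: since $\Phi(B_1)$ need not cover $B_2$, values of $f$ off the image can only be recovered via $\Gamma_2$-paths, which introduces an unavoidable contribution from $q_{\Gamma_2}(f)$. This is precisely what forces the dichotomy above, and it is what makes the universal bound $\sigma_k\le 3\Delta$ necessary in the large-eigenvalue regime. Tracking the various multiplicities so that all constants remain uniform in the graph sizes is the bulk of the routine bookkeeping.
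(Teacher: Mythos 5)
Your proposal is correct and follows essentially the same strategy as the paper's proof: it establishes the same two transplantation estimates (energy pullback via path-and-Cauchy--Schwarz, and the boundary-norm comparison $\|f\|_{B_2}^2\le C_2\|\Phi^*f\|_{B_1}^2+C_3\,q(f)$, which is the paper's Lemma~\ref{lemma:chavelabord}), then runs the identical dichotomy on $\sigma_k(\Gamma_2)$ against the threshold set by the additive error constant, invoking the uniform degree bound $\sigma_k\le 3\Delta$ in the large-eigenvalue regime and symmetry via a quasi-inverse for the reverse inequality. The one minor deviation is that you obtain the boundary-norm estimate directly from the $\tau$-density of $\Phi(B_1)$ in $B_2$ (built into the definition of a rough isometry of graphs with boundary), whereas the paper composes $\Phi$ with a rough inverse $\Psi$ and bounds $d(y,\Phi\circ\Psi(y))$; both routes give the same inequality with the same dependence of constants on $a,b,\tau$ and the maximal degree.
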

This proposition is in the same spirit as Theorem 2.1 of
\cite{mantuano}. Nevertheless, the presence of a boundary brings many
new difficulties. For instance, the following inequality for
$f:V(\Gamma_2)\rightarrow\R$ was proved in \cite[Lemma VI.5.4]{cha2}
\begin{equation*}
    \| f\|^2 _{B_2} \leq \| f\|_{\Gamma_2} ^2 \leq Cq(f) + C'\|\Phi^*f \|_{\Gamma_1} ^2,
\end{equation*}
for constants $C$ and $C'$ which
only depend on $a,b,\tau$ and the maximal degree of both graphs. Here and elsewhere in the paper, $\Phi^*f=f\circ\Phi$ is the pullback of $f$ by $\Phi$.
It is used in \cite{mantuano} to obtain a lower bound on $\|\Phi^*f \|_{\Gamma_1}^2$.
Because we consider Steklov eigenvalues, a lower bound on  $\|\Phi^*f\|_{B_1}^2$ is needed.
\begin{lemma}\label{lemma:chavelabord}
  Given a rough isometry $\Phi:(\Gamma_1,B_1)\rightarrow (\Gamma_2,B_2)$
  between two graphs with boundary,  there exist constants $\Cl{chavelbord1},\Cl{chavelbord2}$
  which depend only on the constants $a,b,\tau$ of the quasi-isometry
  and on the maximal degree of the graphs, such that any function
  $f:V(\Gamma_2)\rightarrow\R$ satisfies
  \begin{equation}\label{eq5}
    \| f\|^2 _{B_2} \leq \Cr{chavelbord1}q(f) + \Cr{chavelbord2}\|\Phi^*f \|_{B_1}^2.
  \end{equation}
\end{lemma}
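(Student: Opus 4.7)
The plan is to adapt Chavel's argument (Lemma VI.5.4 of \cite{cha2}), but since the desired bound involves $\|\Phi^*f\|_{B_1}^2$ instead of $\|\Phi^*f\|_{\Gamma_1}^2$, I must transport each boundary value of $f$ on $B_2$ all the way to a boundary vertex in $B_1$. The key structural fact that enables this is built into Definition 3.3: the restriction $\Phi|_{B_1}:B_1\to B_2$ is itself a rough isometry with respect to the extrinsic distances, with constants depending only on $a,b,\tau$. In particular, there is a constant $T=T(a,b,\tau)$ such that every $v\in B_2$ lies within $d_{\Gamma_2}$-distance $T$ of some $\Phi(w)$ with $w\in B_1$.

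Concretely, I would first assign to each $v\in B_2$ a choice of $w_v\in B_1$ with $d_{\Gamma_2}(v,\Phi(w_v))\leq T$, together with a shortest path $\gamma_v$ in $\Gamma_2$ from $v$ to $\Phi(w_v)$, of length at most $T$. Writing
$$f(v)^2\leq 2f(\Phi(w_v))^2+2\bigl(f(v)-f(\Phi(w_v))\bigr)^2,$$
I apply Cauchy--Schwarz to the telescoping sum along $\gamma_v$ to get
$$\bigl(f(v)-f(\Phi(w_v))\bigr)^2\leq T\sum_{\{x,y\}\in\gamma_v}\bigl(f(x)-f(y)\bigr)^2,$$
and then sum over $v\in B_2$.

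The remaining task is purely combinatorial, and is where the bounded-degree hypothesis is used. For the ``energy'' term, I swap the order of summation and bound the multiplicity $N(x,y):=\#\{v\in B_2:\{x,y\}\in\gamma_v\}$: any such $v$ satisfies $d_{\Gamma_2}(v,x)\leq T$, so $N(x,y)$ is bounded by the size of a ball of radius $T$ in $\Gamma_2$, hence by $D^T$ where $D$ is the maximal degree. For the ``transported'' term, I observe that if $w_v=w_{v'}=w$ then $d_{\Gamma_2}(v,v')\leq 2T$, so the map $v\mapsto w_v$ is at most $D^{2T}$-to-one and
$$\sum_{v\in B_2}f(\Phi(w_v))^2\leq D^{2T}\|\Phi^*f\|_{B_1}^2.$$
Combining these yields \eqref{eq5} with $\Cr{chavelbord1}=2TD^T$ and $\Cr{chavelbord2}=2D^{2T}$, both depending only on $a,b,\tau$ and the maximal degrees. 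The main obstacle is really just the bookkeeping in these two multiplicity estimates; there is no analytical subtlety beyond Chavel's original idea, the only novelty being that the rough-isometry property on the boundary must be extracted from the graph-with-boundary definition and used in the selection of $w_v$.
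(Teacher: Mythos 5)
Your proposal is correct and follows essentially the same route as the paper's proof. The paper invokes the abstract ``rough inverse'' $\Psi:(\Gamma_2,B_2)\to(\Gamma_1,B_1)$ with $\Psi(B_2)\subset B_1$ and $d(y,\Phi\circ\Psi(y))\le K$, which is exactly your assignment $v\mapsto w_v$ (with $K=T$), and both arguments then telescope along a path of controlled length, apply Cauchy--Schwarz, split the square via $f(v)^2\le 2(f(v)-f(\Phi(w_v)))^2+2f(\Phi(w_v))^2$, and close with the same two bounded-degree multiplicity estimates; your explicit bookkeeping of the constants as powers of $D$ is slightly rough (a ball of radius $T$ has size at most $1+D+D(D-1)+\cdots$, not literally $D^T$), but this is cosmetic and does not affect validity.
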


\begin{proof}[Proof of Lemma \ref{lemma:chavelabord}]
We have to adapt Lemma VI.5.4 of \cite{cha2} to graphs with boundary. The
rough isometry $\Phi$ has a rough inverse (See the introduction of
\cite{kanai} for a discussion.) That is a rough isometry $\Psi$  from
$(\Gamma_2,B_2)$ to $(\Gamma_1,B_1)$, with $\Psi(B_2)\subset B_1$ such
that any $x\in V_1$ and $y\in V_2$ satisfy
$$
d(x,\Psi \circ \Phi(x))\le K;\quad d(y,\Phi \circ \Psi (y))\le K
$$
where $K$ is a constant depending on the constants $a,b,\tau$ of the rough isometry $\Phi$.
For $y\in B_2$, 
\begin{gather}\label{equation:fcarre}
f^2(y)\le 2(f(y)-(\Phi \circ \Psi)^*f(y))^2+ 2(\Phi \circ \Psi)^*f(y)^2.
\end{gather}
In order to bound $\sum_{y\in B_2} f(y)^2$, the two terms on the right-hand side of this inequality will be estimated.
As $d(y,\Phi \circ \Psi(y))\le K$, there is a path $y_1=y,y_2,...,y_n=\Phi \circ \Psi(y)$ of length at most $K$, and it follows from triangle and Cauchy-Schwartz Inequality that
$$
( (f(y)-(\Phi \circ \Psi)^*f(y))^2 \le K^2\sum_{i=1}^{n-1}(f(y_{i+1})-f(y_i))^2.
$$
This sum only involves points that are at a distance at most $K$ from $y\in B_2$. As the number of points in a ball of radius $K$ is bounded above in terms of the maximal degree, there exists a constant $\Cr{chavelbord1}$ (depending only on $K$ and on the maximal degree) such that 
$$2\sum_{y\in B_2}(f(y)-(\Phi \circ \Psi)^*f(y))^2\leq C(K)q(f).$$

We also have to estimate $\sum_{y\in {B_2}} ((\Phi \circ \Psi)^*f)^2(y)$. Observe that
$$
\sum_{y\in {B_2}} ((\Phi \circ \Psi)^*f)^2(y)=\sum_{y\in {B_2}} (\Phi^*f)^2(\Psi(y)).
$$
But $\Psi(y) \in B_1$, which may be the image of at most a finite controlled number of vertices $y \in B_2$. Therefore there exist another constant $\Cr{chavelbord2}$ such that
$$
2\sum_{y\in B_2} ((\Phi \circ \Psi)^*f)^2(y)\le \Cr{chavelbord2} \sum_{z\in B_1} (\Phi^*f)^2(z).
$$
The results follows by substitution of the previous two inequalities in \eqref{equation:fcarre}.
\end{proof}

The previous Lemma will be the main tool used in the following proof.
\begin{proof}[Proof of
  Proposition~\ref{proposition:spectrumRoughlyIsomGraphs}]
 Let $\Phi$ be a rough isometry between $(\Gamma_1,B_1)$ and
  $(\Gamma_2,B_2)$, with constants $a,b,\tau$.
  Given a function $f: V_2 \rightarrow\R$, define $\Phi^*f : V_1\rightarrow\R$ by $\Phi^*f(x)=f(\Phi(x))$. 
  The following inequality follow from Lemma VI.5.2 of \cite{cha2}:
  \begin{equation}\label{eq3}
    q(\Phi^*f) \leq \Cl{roughisometry1} q(f),
  \end{equation}
	where $\Cr{roughisometry1}$ depend on the maximal degree and on $a,b,\tau$.
	
	\medskip
  We will treat separately the situations where $\sigma_k(\Gamma_2)$ is smaller or larger than $\frac{1}{2\Cr{chavelbord1}}$.

\noindent
\textbf{Situation 1}:  Suppose that $\sigma_k(\Gamma_2) \leq \frac{1}{2\Cr{chavelbord1}}$.

  Let $f_1, \cdots , f_k$ be eigenfunctions corresponding to
  $\sigma_1(\Gamma_2,B_2), \cdots, \sigma_k(\Gamma_2,B_2)$
  respectively. 
  The space $E_k:=\mbox{span}(\Phi^*f_1,\cdots,\Phi^*f_k)$ will be
  used in the variational characterization of $\sigma_k(\Gamma_1,B_1).$ So, if $f\in E_k$ and $g= \Phi^{*}f$, we have in particular to show that the restriction $g$ to the boundary $B_1$ of $\Gamma_1$ is large enough. More precisely, we have by (\ref{eq5}),
 $$
  \|g\|_{B_1}^2 
  =   \|\Phi^*f \|_{B_1}^2 \geq 
  \Cr{chavelbord2}^{-1}\left(\| f\|^2 _{B_2} - \Cr{chavelbord1}q(f)  \right) 
  \geq \Cr{chavelbord2}^{-1}\|f\|^2 _{B_2}\left( 1 - \Cr{chavelbord1}\sigma_k(\Gamma_2)  \right) \geq \frac{1}{2\Cr{chavelbord2}}\|f\|^2 _{B_2} > 0.
  $$
  Using \eqref{eq3} and the above inequality leads to $\frac{\sigma_k(\Gamma_1)}{\sigma_k(\Gamma_2)} \leq 2\Cr{roughisometry1}\Cr{chavelbord2}$. 
  
	\medskip
	\noindent
	\textbf{Situation 2}:  Suppose that $\sigma_k(\Gamma_2) \geq \frac{1}{2\Cr{chavelbord1}}$. 
	
	In this case, 
  $\sigma_k(\Gamma_1) \leq
  2\,\Cr{chavelbord1}\sigma_k(\Gamma_1)\sigma_k(\Gamma_2)$ and
  $$\frac{\sigma_k(\Gamma_1)}{\sigma_k(\Gamma_2)} \le 2\Cr{chavelbord1} \sigma_k(\Gamma_1).$$ 
  The conclusion now follows from  the fact that $\sigma_k(\Gamma_1)$ is bounded above uniformly in terms of the maxiamal degree of the graph.
\end{proof}

\begin{remark}
This proof is typical. In fact, the proof of Theorem \ref{theorem:MainSpectralComparison} has a similar structure to that of Proposition \ref{proposition:spectrumRoughlyIsomGraphs}. Nevertheless, the techniques are much more involved in this latter case.
\end{remark}

\section{Preliminary  results}
\label{section:preliminary}

In this section some results which will be used in the proof of
Theorem~\ref{theorem:MainSpectralComparison} are presented.

\subsection{The Dirichlet energy on a manifold and on its boundary}
On a graph with boundary $(\Gamma,B)$ one immediately sees that
$$q_B(f):=\sum_{\stackrel{v\sim w}{ v,w\in B}}\bigl(f(v)-f(w)\bigr)^2\leq q(f).$$
That is, restricting a function to the boundary reduces its energy.
There is no such simple formula for compact manifolds with
boundary. Nevertheless, under more restrictive hypothesis, some
control can still be granted. 
\begin{lemma}\label{lemma:FourierStuff}
  For each Steklov eigenfunction $F\in C^\infty(M)$ corresponding to $\sigma<1/4$, the following holds:
  \begin{gather}
    \|\nabla^\Sigma F\|_{\Sigma}^2 \leq \frac{1}{2}\|\nabla F\|_{M}^2.  
  \end{gather}
\end{lemma}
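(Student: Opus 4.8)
The plan is to reduce everything to the product collar $N:=[0,1]\times\Sigma\subset M$ furnished by hypothesis H1, and to analyze $F$ there by separation of variables. Let $\{\phi_k\}_{k\ge 1}$ be an $L^2(\Sigma)$-orthonormal basis with $\Delta_\Sigma\phi_k=\mu_k\phi_k$, $0=\mu_1\le\mu_2\le\cdots$, and write $F(t,x)=\sum_k f_k(t)\phi_k(x)$ on $N$. Since $\Delta F=0$ and the collar metric is a product, $f_k''=\mu_k f_k$ on $[0,1]$; and since the outward unit normal along $\Sigma=\{0\}\times\Sigma$ is $-\partial_t$, the Steklov condition $\partial_\nu F=\sigma F$ becomes the Robin condition $f_k'(0)=-\sigma f_k(0)$. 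Introduce the tangential energy profile $h(t):=\|\nabla^\Sigma F(t,\cdot)\|_{L^2(\Sigma)}^2=\sum_k\mu_k f_k(t)^2$ and the normal profile $g(t):=\|\partial_t F(t,\cdot)\|_{L^2(\Sigma)}^2=\sum_k f_k'(t)^2$, so that $\|\nabla^\Sigma F\|_\Sigma^2=h(0)$ and $\|\nabla F\|_M^2\ge\int_N|\nabla F|^2=\int_0^1\bigl(g(t)+h(t)\bigr)\,dt$. Thus the statement is equivalent to bounding $h(0)$ by half the right-hand side; one also records the global identity $\|\nabla F\|_M^2=\int_M\mathrm{div}(F\nabla F)=\sigma\|F\|_\Sigma^2$, obtained from $\Delta F=0$ and the Steklov condition.

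The estimate is driven by two elementary facts. First, $h$ is convex: $h''(t)=2\sum_k\mu_k\bigl(f_k'(t)^2+\mu_k f_k(t)^2\bigr)\ge 0$; and the Robin condition gives $h'(0)=2\sum_k\mu_k f_k(0)f_k'(0)=-2\sigma h(0)$, so by convexity $h(t)\ge h(0)(1-2\sigma t)$ on $[0,1]$ — and this lower bound is genuinely positive precisely because $\sigma<\tfrac14<\tfrac12$, which is exactly where the hypothesis is used. Second, there is a mode-wise conservation (Rellich-type) identity: $\frac{d}{dt}\bigl(f_k'^2-\mu_k f_k^2\bigr)=0$, hence $g(t)-h(t)\equiv g(0)-h(0)=\sigma^2\|F\|_\Sigma^2-h(0)$, which lets one express the normal energy density through the tangential one and yields $\int_0^1(g+h)=2\int_0^1 h+\sigma^2\|F\|_\Sigma^2-h(0)$. (Equivalently, writing $c_k:=f_k(0)^2$, everything boils down to showing that the tangential Rayleigh quotient of the trace, $\sum_k\mu_k c_k\big/\sum_k c_k$, is at most $\sigma/2$.)

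The remaining step is to put these together and chase the constant down to exactly $\tfrac12$: estimate $\int_0^1 h$ from below by the tangent-line bound — together with the complementary bound $h(t)\ge h(0)-\sigma^2\|F\|_\Sigma^2$ coming from $g\ge0$, used wherever it is sharper — and feed in the energy identity $\|\nabla F\|_M^2=\sigma\|F\|_\Sigma^2$. \textbf{This bookkeeping is the main obstacle.} The naive convexity inequality on the unit-length collar by itself only produces a constant of order $4/3$, so one genuinely has to trade the tangential energy $\int_0^1 h$ against the normal energy $\int_0^1 g$, use the collar length $1$ quantitatively, and exploit the smallness $\sigma<\tfrac14$ at each comparison; one expects to also need the bounded-geometry hypotheses H2–H5 to control the part of $M$ outside the collar (equivalently, its Dirichlet-to-Neumann operator on $\{1\}\times\Sigma$), which is what forces a Steklov eigenfunction with eigenvalue $<\tfrac14$ to be concentrated at low tangential frequencies and thereby makes the clean constant $\tfrac12$ attainable.
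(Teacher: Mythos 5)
Your setup is the same as the paper's first half: Fourier decomposition of $F$ on the product collar, the mode ODE $f_k''=\mu_k f_k$, the Robin condition $f_k'(0)=-\sigma f_k(0)$, and your convexity and Rellich-type identities are all correct. But the argument is not finished: your last paragraph leaves the decisive step open (``this bookkeeping is the main obstacle'') and replaces it by the guess that one must invoke H2--H5 and control the Dirichlet-to-Neumann operator of the part of $M$ beyond the collar. That is the genuine gap, and the guessed route is not the paper's. The paper closes the estimate entirely inside the collar: the Robin condition determines each mode up to the single constant $a_k(0)$, namely $a_k(r)=a_k(0)\bigl[\cosh(\sqrt{\lambda_k}\,r)-\tfrac{\sigma}{\sqrt{\lambda_k}}\sinh(\sqrt{\lambda_k}\,r)\bigr]$, so the collar energy $\int_0^1\bigl((a_k')^2+\lambda_k a_k^2\bigr)\,dr$ is computed in closed form; with $x=\sqrt{\lambda_k}$ it equals $a_k(0)^2x^2$ times an explicit hyperbolic expression, which is bounded below by $\tfrac12$ using only $\tanh x\le x$ and $\sigma<\tfrac14$. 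No global identity such as $\|\nabla F\|_M^2=\sigma\|F\|_\Sigma^2$, no information on $M$ outside the collar, and none of H2--H5 enter; the missing step in your writeup is simply this explicit per-mode energy evaluation.

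You should also be aware that the constant you were straining to reach is not the one the paper's computation delivers. The per-mode lower bound $\ge\tfrac12$ yields $\|\nabla F\|_M^2\ge\tfrac12\|\nabla^\Sigma F\|_\Sigma^2$, i.e.\ $\|\nabla^\Sigma F\|_\Sigma^2\le 2\|\nabla F\|_M^2$: the factor lands on the other side of the inequality than in the printed statement. Your own identities already recover (indeed slightly improve) this: from $h(t)\ge h(0)(1-2\sigma t)$ you get $\|\nabla F\|_M^2\ge\int_0^1 h\ge(1-\sigma)h(0)\ge\tfrac34 h(0)$, so the elementary convexity argument is not the bottleneck you thought. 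Moreover the sharp form you aimed at, namely your reformulation $\sum_k\mu_k c_k\le\tfrac{\sigma}{2}\sum_k c_k$, cannot follow from any collar-only analysis: the single-mode profile $f(t)=\cosh\bigl(\sqrt{\mu}\,(1-t)\bigr)$ satisfies the Robin condition with $\sigma=\sqrt{\mu}\tanh\sqrt{\mu}<\tfrac14$ for small $\mu$, while its trace Rayleigh quotient is $\mu>\tfrac{\sigma}{2}$. So the correct target of the paper's argument is the inequality with the factor on the energy side (a constant $2$, or your $4/3$), and chasing $\tfrac12$ by importing global hypotheses was a wrong turn.
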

\begin{proof}
  Let $(f_k)\subset L^2(\Sigma)$ be an orthonormal basis corresponding
  to the eigenvalues $\lambda_k$ of the Laplacian on $\Sigma$.
  Let $F\in C^\infty(M)$ be a Steklov eigenfunction corresponding to
  $\sigma$. On the cylindrical neighborhood $\Sigma\times [0, 1]$ the
  Fourier decomposition of $F$ is
  $$F = \sum_{k = 0}^{\infty} a_k(r)f_k(\theta)\qquad r\in
  [0,1],\quad \theta\in\Sigma.$$ 
  As the function $F$ is harmonic on $M$, and hence on the cylinder,
  the following holds 
  $$
  \Delta F(r,\theta) = \sum_{k=0}^\infty[ -a_k''(r)f_k(\theta) + a_k(r)\lambda_kf_k(\theta)] = 0
  $$
  which implies that $a_k''(r) = \lambda_k a_k(r)$ and hence 
  $$
  a_k(r) = a_k(0)\cosh(\sqrt{\lambda_k}r) + \frac{1}{\sqrt{\lambda_k}}a_k'(0)\sinh(\sqrt{\lambda_k}r).
  $$
  Moreover, on the boundary $\Sigma$,
  $$0= \sigma F(\theta)-\frac{\partial F}{\partial n}(\theta)= \sum_k
  \bigl(\sigma  a_k(0) + a_k'(0)\bigr)f_k(\theta),$$
  which implies $\sigma a_k(0)+a_k'(0)=0$ whence by substitution
  leads to
  $$
  a_k(r) = a_k(0)[\cosh(\sqrt{\lambda_k}r) - \frac{ \sigma}{\sqrt{\lambda_k}}\sinh(\sqrt{\lambda_k}r)]
  $$
  and 
  $$
  a_k'(r) = a_k(0)\sqrt{\lambda_k}[\sinh(\sqrt{\lambda_k}r) - \frac{ \sigma}{\sqrt{\lambda_k}}\cosh(\sqrt{\lambda_k}r)]
  $$
  The Dirichlet energy on the boundary and on the cylinder are
  expressed by
  $$
  \|\nabla^{\Sigma}F\|_{\Sigma}^2 = \sum_k a_k^2(0)\lambda_k
  $$
  and 
  \begin{gather}\label{e1}
    \|\nabla F\|_{\Sigma \times (0, 1)}^2 = \sum_k \int_0^1 [(a_k')^2 + a_k^2 \lambda_k]dr.
  \end{gather}
  Now using $x=\sqrt{\lambda_k}$ leads to
  $$
  a_k^2(r) = a_k^2(0)\left[\cosh^2(xr) + \frac{ \sigma^2}{x^2}\sinh^2(xr) 
    - \frac{ \sigma}{x}\sinh(2xr)\right]
  $$
  and 
  $$
  (a_k^{'})^2(r) = a_k^2(0)x^2\left[\sinh^2(xr) + \frac{ \sigma^2}{x^2}\cosh^2(xr)
    - \frac{ \sigma}{x}\sinh(2xr)\right]
  $$
  Substitution in equation~\eqref{e1} and evaluation of the integrals
  give 
 \begin{eqnarray*}
    \|\nabla F\|_{\Sigma \times (0, 1)}^2 & = & \sum_k a_k^2(0)x^2 \int_0^1 \left[\left(1 + \frac{ \sigma^2}{x^2}\right)
      \cosh(2xr) - \frac{2 \sigma}{x}\sinh(2xr)\right]dr \\
    & = &  \sum_k a_k^2(0)x^2 \left[\left(1 + \frac{ \sigma^2}{x^2}\right) \frac{\sinh(2x)}{2x}
 - \frac{\sigma}{x^2}\left(\cosh(2x) - 1\right)\right].
\end{eqnarray*}
Moreover,
for $\sigma<1/4$, it follows from $\tanh(x)\leq x\leq x/4\sigma$ that $\cosh(x)-\frac{2\sigma}{x}\sinh(x)\geq \frac{1}{2}\cosh(x),$
whence
\begin{align*}
  \left(1 + \frac{ \sigma^2}{x^2}\right) \frac{\sinh(2x)}{2x}
  - \frac{\sigma}{x^2}\left(\cosh(2x) - 1\right)
  &=\frac{\sinh(x)}{x}
  \left(
    \cosh(x)-2\frac{\sigma}{x}\sinh(x)+\frac{\sigma^2}{x^2}\cosh(x)
  \right)\\
  &\geq\frac{\sinh(x)}{x}\left(\frac{1}{2}\cosh(x)+\frac{\sigma^2}{x^2}\right)\\
  &=\left(\frac{\sinh(x)\cosh(x)}{2x}+\frac{\sigma^2}{x^2}\frac{\sinh(x)}{x}\right)\geq\frac{1}{2}.
\end{align*}
\end{proof}

In order to use the above Lemma in estimations of higher eigenvalues, one needs to consider linear combinations of eigenfunctions.

\begin{cor}\label{coro:linearcombiFourrierStuff}
Let $F\in C^\infty(M)$ be a linear combination of the first $k$ Steklov eigenfunctions $F_1,\cdots,F_k$. If $\sigma_k \le \frac{1}{4}$ then
$F=a_1 F_1+\cdots+ a_k F_k$
satisfy the following:
$$\|\nabla^{\Sigma}F\|_{\Sigma}^2 \le \frac{k}{8}\|\nabla F\|_{M}^2.$$
\end{cor}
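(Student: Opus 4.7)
The plan is to reduce the bound for the linear combination $F$ to Lemma~\ref{lemma:FourierStuff} applied to each eigenfunction $F_i$ separately, and then to recombine the pieces using the Steklov orthogonality. Replacing the $F_i$ if necessary by an orthonormal basis of each eigenspace, one may assume $\int_\Sigma F_iF_j=\delta_{ij}$. Green's identity, combined with harmonicity of the $F_i$ and the Steklov boundary condition, then gives
\begin{equation*}
\int_M \nabla F_i\cdot\nabla F_j=\int_\Sigma F_i\,\partial_\nu F_j=\sigma_j\,\delta_{ij},
\end{equation*}
so the bulk gradients $\nabla F_1,\dots,\nabla F_k$ are mutually orthogonal in $L^2(M)$. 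Writing $F=\sum_{i=1}^k a_i F_i$, this yields
\begin{equation*}
\|\nabla F\|_M^2=\sum_{i=1}^k a_i^2\,\sigma_i.
\end{equation*}

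The tangential gradients $\nabla^\Sigma F_i$ have no such orthogonality on $\Sigma$, so I would estimate the left-hand side by the triangle inequality in $L^2(\Sigma)$ followed by Cauchy--Schwarz:
\begin{equation*}
\|\nabla^\Sigma F\|_\Sigma^2\leq\Bigl(\sum_{i=1}^k|a_i|\,\|\nabla^\Sigma F_i\|_\Sigma\Bigr)^{\!2}\leq k\sum_{i=1}^k a_i^2\,\|\nabla^\Sigma F_i\|_\Sigma^2.
\end{equation*}
The hypothesis $\sigma_k\leq 1/4$ forces $\sigma_i\leq 1/4$ for every $i\leq k$, so Lemma~\ref{lemma:FourierStuff} applies to each $F_i$ individually and controls $\|\nabla^\Sigma F_i\|_\Sigma^2$ by a fixed multiple of $\|\nabla F_i\|_M^2=\sigma_i$. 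Substituting these bounds and comparing with the expression above for $\|\nabla F\|_M^2$ delivers a bound of the required form $C\,k\,\|\nabla F\|_M^2$.

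The principal obstacle is precisely the lack of orthogonality on the boundary: since the $\nabla^\Sigma F_i$ can in principle be nearly aligned, one cannot avoid a dimension-dependent loss, which is what introduces the factor $k$. Obtaining the precise constant $1/8$ in the statement (rather than the cruder $1/2$ that a naive chaining of the two inequalities above produces) should amount to keeping careful track of the constants in Lemma~\ref{lemma:FourierStuff}, which is in fact noticeably lossy for $\sigma$ strictly smaller than $1/4$, and to distributing the weights $\sqrt{\sigma_i}$ in the Cauchy--Schwarz step so as to exactly match the weighted sum $\sum a_i^2\sigma_i=\|\nabla F\|_M^2$.
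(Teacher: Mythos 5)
Your proof is correct and follows the same route as the paper's: $L^2(\Sigma)$-orthonormality of the eigenfunctions, Green's identity to obtain $\int_M\nabla F_i\cdot\nabla F_j=\sigma_i\delta_{ij}$ and hence $\|\nabla F\|_M^2=\sum_i a_i^2\sigma_i$, Lemma~\ref{lemma:FourierStuff} applied to each $F_i$, and a final Cauchy--Schwarz step producing the factor $k$. The only cosmetic difference is that the paper expands $\|\nabla^\Sigma F\|_\Sigma^2$ as the double sum $\sum_{i,j}a_ia_j\langle\nabla^\Sigma F_i,\nabla^\Sigma F_j\rangle_\Sigma$ and applies Cauchy--Schwarz term by term, while you apply the triangle inequality in $L^2(\Sigma)$ first; these yield the same chain of estimates.

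Your caution about the constant is in fact well placed. The argument you sketch --- and the paper's own --- gives
\[
\|\nabla^\Sigma F\|_\Sigma^2 \le \tfrac{k}{2}\,\|\nabla F\|_M^2,
\]
not $\tfrac{k}{8}$. The paper's final displayed inequality reads $\tfrac{k}{2}\sum a_i^2\sigma_i \le \tfrac{k}{8}\sum a_i^2\sigma_i$, which cannot hold for any nonzero $F$; the hypothesis $\sigma_i\le 1/4$ has already been consumed in applying Lemma~\ref{lemma:FourierStuff} and cannot be invoked again to shrink the factor $\tfrac{1}{2}$. You should not torture the Cauchy--Schwarz step trying to recover $\tfrac{1}{8}$: that constant appears to be a misprint, and the value $\tfrac{k}{2}$ serves equally well in every downstream use (Lemmas~\ref{lemma:discretizationenergybound} and~\ref{lemma:discretizationInjective} only require a bound of the form $C\,k\,\|\nabla F\|_M^2$, with the specific constant absorbed).
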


\begin{proof}
 It follows from Green's formula that
$$
\int_M\langle \nabla F_i,\nabla F_j\rangle =\int_M \Delta F_iF_j+\int_{\Sigma} \partial_{\nu}F_iF_j=
\int_{\Sigma} \sigma_i F_i F_j,
= \sigma_i \delta_{ij}.
$$
It follows from the Cauchy-Schwarz inequality that
\begin{align*}
\|\nabla^{\Sigma}F\|_{\Sigma}^2&=\vert \int_{\Sigma} \langle \nabla^{\Sigma}F,\nabla^{\Sigma}F \rangle\vert\\
&=
\vert \sum_{i,j=1}^k a_ia_j\int_{\Sigma} \langle \nabla^{\Sigma}F_i,\nabla^{\Sigma}F_j\rangle\vert\\
&\le
\vert \sum_{i,j=1}^k a_ia_j\left(\int_{\Sigma} \langle \nabla^{\Sigma}F_i,\nabla^{\Sigma}F_i\rangle\right)^{1/2}
\left(\int_{\Sigma} \langle \nabla^{\Sigma}F_j,\nabla^{\Sigma}F_j\rangle\right)^{1/2}\vert\\
&\le
\sum_{i,j=1}^k \vert a_i\vert \vert a_j \vert \|\nabla^{\Sigma}F_i\|_{\Sigma}\|\nabla^{\Sigma}F_j\|_{\Sigma}.
\end{align*}
It follows from Lemma \ref{lemma:FourierStuff} that 
$$
\|\nabla^{\Sigma}F\|_{\Sigma}^2\le \frac{1}{2}  \sum_{i,j=1}^k \vert a_i\vert \vert a_j \vert \|\nabla F_i\|_{M}\|\nabla F_j\|_{M} =
 \frac{1}{2}  \sum_{i,j=1}^k \vert a_i\vert \vert a_j \vert\sqrt{\sigma_i}\sqrt{\sigma_j}.
$$
We also have
$$
\|\nabla F\|_{M}^2=\sum_{i,j=1}^k a_ia_j \langle \nabla F_i,\nabla F_j\rangle=\sum_{i=1}^ka_i^2\sigma_i.
$$
Now, in general, for $\alpha_i, \alpha_j\geq 0$, it follows from the Cauchy-Schwarz inequality that
$$
\sum_{i,j=1}^k \alpha_i \alpha_j=(\sum_i\alpha_i)^2\leq k\sum_i\alpha_i^2.$$
Setting $\alpha_i=\vert a_i \vert \sqrt{\sigma_i}$, and using $\sigma_i \le \frac{1}{4}$ it follows that
$$
\|\nabla^{\Sigma}F\|_{\Sigma}^2 \le \frac{k}{2} \sum_{i=1}^k a_i^2 \sigma_i \le \frac{k}{8}\sum_{i=1}^ka_i^2\sigma_i=\frac{k}{8} \|\nabla F\|_{M}^2.
$$
\end{proof}

\subsection{Local Poincaré-type inequality on products}
The following Lemma is similar to Lemma 8 of \cite{kanai1986}. 
See also Lemma Vi.5.5 in \cite[p. 177]{cha2}.
\begin{lemma}\label{lemma:kanaiCylinder}
  For each $\delta>0$, there exists a constant
    $\Cl{kanaicyl}=\Cr{kanaicyl}(n,\kappa,\delta)$  with the following properties:
    Given $p\in\Sigma$, let
  $C=B_\Sigma(p,\delta)\times[0,\delta]$.
  Then any smooth function $F\in C^\infty(\overline{C})$ satisfies
  $$\int_{C} |F - F_C| \leq \Cr{kanaicyl}\int_C|\nabla F|,$$
  where $F_C=\strokedint_CF$ is the average of $F$ on $C$.
\end{lemma}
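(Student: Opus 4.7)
The plan is to exploit the product structure of $C=B_\Sigma(p,\delta)\times[0,\delta]$ by a Fubini-style slicing argument, reducing the inequality on $C$ to two lower-dimensional Poincaré inequalities. For $r\in[0,\delta]$, set
\begin{equation*}
g(r):=\strokedint_{B_\Sigma(p,\delta)}F(\cdot,r)\,dA,
\end{equation*}
the horizontal-slice average, so that by Fubini $F_C=\strokedint_0^\delta g(r)\,dr$. The triangle inequality splits
\begin{equation*}
\int_C |F-F_C|\;\leq\;\int_C|F(\cdot,r)-g(r)|\,dA\,dr\;+\;\int_C|g(r)-F_C|\,dA\,dr,
\end{equation*}
and I would estimate the two contributions separately.

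For the first term I would fix $r$ and apply an $L^1$-Poincaré inequality on the geodesic ball $B_\Sigma(p,\delta)$ of the boundary $\Sigma$. Under hypothesis H3, the Ricci curvature of $\Sigma$ is bounded below by $-(n-2)\kappa$, and this is exactly the setting in which Kanai's Lemma 8 of \cite{kanai1986} (or Chavel's Lemma VI.5.5 in \cite{cha2}) supplies a constant $\Cr{kanaicyl}'=\Cr{kanaicyl}'(n,\kappa,\delta)$ such that
\begin{equation*}
\int_{B_\Sigma(p,\delta)}|F(\cdot,r)-g(r)|\,dA\;\leq\;\Cr{kanaicyl}'\int_{B_\Sigma(p,\delta)}|\nabla^\Sigma F(\cdot,r)|\,dA.
\end{equation*}
Integrating over $r\in[0,\delta]$ and using that $|\nabla^\Sigma F|\leq|\nabla F|$ in the product metric bounds this contribution by $\Cr{kanaicyl}'\int_C|\nabla F|$.

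For the second term I would use the one-dimensional $L^1$-Poincaré inequality on $[0,\delta]$: for any $r$, $|g(r)-F_C|=|\strokedint_0^\delta(g(r)-g(s))\,ds|\leq\int_0^\delta|g'(t)|\,dt$, so $\int_0^\delta|g(r)-F_C|\,dr\leq\delta\int_0^\delta|g'(t)|\,dt$. Differentiating under the integral sign gives $g'(r)=\strokedint_{B_\Sigma(p,\delta)}\partial_rF(\cdot,r)\,dA$, so multiplying by the volume of the slice and integrating in $r$ yields
\begin{equation*}
\int_C|g(r)-F_C|\,dA\,dr\;\leq\;\delta\int_C|\partial_rF|\;\leq\;\delta\int_C|\nabla F|.
\end{equation*}
The slice volume cancels on both sides, so no lower bound on $\mbox{Vol}(B_\Sigma(p,\delta))$ is required. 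Adding the two contributions would give the lemma with $\Cr{kanaicyl}=\Cr{kanaicyl}'+\delta$.

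The main obstacle is to obtain the constant $\Cr{kanaicyl}'$ on $B_\Sigma(p,\delta)$ that is uniform in the base point $p$, depending only on $n$, $\kappa$ and $\delta$. This uniformity relies on the Ricci lower bound from H3 being independent of $p$ and is the content of Kanai's or Buser's $L^1$-Poincaré inequality on geodesic balls; no control on the injectivity radius of $\Sigma$ is needed for this step, since the inequality is local and integral in nature. Everything else is a direct consequence of Fubini, the triangle inequality, and the elementary one-variable mean-value bound.
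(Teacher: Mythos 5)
Your proof is correct and uses essentially the same strategy as the paper's: a triangle-inequality split through an intermediate conditional average, followed by a lower-dimensional $L^1$-Poincaré inequality on each factor of the product. The only difference is the order of conditioning: the paper introduces $G(x)=\strokedint_0^\delta F(x,s)\,ds$ (average in the cylinder direction, then Kanai on $B_\Sigma(p,\delta)$), while you introduce $g(r)=\strokedint_{B_\Sigma(p,\delta)}F(\cdot,r)\,dA$ (average over the boundary slice, then the elementary one-dimensional bound in $r$). These are mirror-image decompositions relying on the same two ingredients, so the arguments are equivalent in substance, and your observation that the slice volume cancels in the second term is a correct and welcome explicit justification of a point the paper leaves implicit.
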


\begin{proof}[Proof of Lemma~\ref{lemma:kanaiCylinder}]
Using $(x,t)\in C$ as coordinates, the integral is split
\begin{align}\label{ineq:proofkanaicyl}
  \int_{C}|F - F_C| &\leq 
  \int_C\left|F(x,t)-\strokedint_0^{\delta}F(x,s)\,ds\right|+
  \int_C\left|\strokedint_0^{\delta}F(x,s)\,ds -
    F_C\right|\\\nonumber
  &=
  \int_B\int_0^\delta\left|F(x,t)-\strokedint_0^{\delta}F(x,s)\,ds\right|+
  \int_0^\delta\int_B\left|\strokedint_0^{\delta}F(x,s)\,ds -
      F_C\right|.
  \end{align}
  We will estimate the two terms in the right-hand side of this inequality separately.
  It follows from Kanai's inequality (Lemma 8 in \cite{kanai1986}) that
  $$\int_0^\delta\left|F(x,t)-\strokedint_0^{\delta}F(x,s)\,ds\right|
  \leq \Cr{kanaicyl}\int_0^\delta\left|\partial_tF\right|.$$
  Moreover, the average of the function $G:B\rightarrow\R$ defined by
  $$G(x)=\strokedint_0^{\delta}F(x,s)\,ds$$
  is $F_C$. Therefore, it follows (again from Kanai's inequality) that
  $$\int_B\left|\strokedint_0^{\delta}F(x,s)\,ds - F_C\right|
  =\int_B\left|G - F_C\right|\leq \Cr{kanaicyl}\int_B|\nabla^\Sigma G|=\Cr{kanaicyl}\int_B\left|\strokedint_0^{\delta}\nabla^\Sigma F\right|.$$ 
  Substitution in \eqref{ineq:proofkanaicyl} now leads to
  \begin{align*}
    \int_{C}|F - F_C|\leq\Cr{kanaicyl}\int_C\left|\partial_tF\right|+\left|\nabla^\Sigma F\right|.
\end{align*}
\end{proof}

\subsection{Discretization of smooth functions}
\label{section:DiscretizationSmoothing}


Let $M\in\mathcal{M}(\kappa, r_0,n).$ Given
$\epsilon\in (0,r_0/2)$ let $\Gamma$ be an $\epsilon$-discretization
of $M$ with boundary $V_\Sigma$. 
The \emph{discretization} $f=DF:V\rightarrow\mathbb{R}$
of a smooth function $F\in C^\infty(M)$ is defined as
\begin{gather}
  f(v) = 
  \begin{cases}
    \strokedint_{B_{\Sigma}(v,3\epsilon)}F & \text{if} \ \  v \in V_\Sigma,\\ 
    \strokedint_{B_{M}(v,3\epsilon)}F & \text{if} \ \ v\in V_I.
  \end{cases}
\end{gather}
The symbol $\strokedint$ is used for the averaging operator on its
domain.
\begin{remark}
  Throughout, we follow Chavel's convention from~\cite{cha2} that
  functions on $M$ are denoted with upper case $F$, while functions on
  (vertices of) the graph $\Gamma$ are denoted with lower case $f$. 
\end{remark}
\begin{lemma}\label{lemma:discretizationenergybound}
  There exists a constant $\Cl{discretizationenergy}$ which only depends on $\kappa,r_0,n$ with the following property. 
  Let $F\in C^{\infty}(M)$ be a linear combination of the first $k$
  Steklov eigenfunctions $F_1,\cdots,F_k$. If $\sigma_k(M)<1/4$, then the discretization $f=DF$ satisfy
  $$q(f) \leq \Cr{discretizationenergy}k\|\nabla F\|_M^2.$$
\end{lemma}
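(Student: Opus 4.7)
The plan is to decompose the edge set of the discretization $\Gamma$ into three disjoint families based on which part of the manifold their endpoints live in, and to estimate the contribution to $q(f)=\sum_{v\sim w}(f(v)-f(w))^2$ of each family separately. Specifically, let $E_I$ be the interior edges (both endpoints in $V_I$), $E_\Sigma$ the boundary edges (both endpoints in $V_\Sigma$), and $E_T$ the transversal edges joining $v\in V_\Sigma$ to its partner $v'=(4\epsilon,v)\in V_\Sigma'$. For each edge $(v,w)$, the goal is to bound $(f(v)-f(w))^2$ by a local Dirichlet integral of $F$, and then to sum using the finite-multiplicity of the resulting local covers, a standard consequence of Bishop--Gromov under the bounded-geometry hypotheses H2--H5.

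For an interior edge both balls $B_M(v,3\epsilon)$ and $B_M(w,3\epsilon)$ sit inside $U=B_M(v,6\epsilon)$, so an $L^2$ Poincaré inequality on $U$ combined with the volume comparisons (comparing $\strokedint_{B_M(v,3\epsilon)}|F-F_U|^2$ to $\strokedint_U|F-F_U|^2$) yields
$$(f(v)-f(w))^2\le C\,\epsilon^2\strokedint_U|\nabla F|^2,$$
and summation over $E_I$ gives a total bound of the form $C\|\nabla F\|_M^2$, with no factor of $k$. For a transversal edge I enlarge to a cylinder $\widetilde C_v=B_\Sigma(v,5\epsilon)\times[0,8\epsilon]$ which contains both $B_\Sigma(v,3\epsilon)\subset\{0\}\times\Sigma$ (the averaging domain for $f(v)$) and $B_M(v',3\epsilon)$ (the averaging domain for $f(v')$). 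Applying Lemma \ref{lemma:kanaiCylinder} to $F$ on $\widetilde C_v$, combined with the volume comparisons and Cauchy--Schwarz to pass from $L^1$ to $L^2$, produces $(f(v)-f(v'))^2\le C\int_{\widetilde C_v}|\nabla F|^2$; summation over $E_T$ contributes another $C\|\nabla F\|_M^2$, again without any $k$.

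For a boundary edge the same Poincaré argument carried out intrinsically on $\Sigma$ (legitimate because of H3 and H5) gives
$$\sum_{(v,w)\in E_\Sigma}(f(v)-f(w))^2\le C\|\nabla^\Sigma F\|_\Sigma^2.$$
It is here, and only here, that the factor $k$ enters: invoking Corollary \ref{coro:linearcombiFourrierStuff} under the standing hypothesis $\sigma_k(M)<1/4$ gives $\|\nabla^\Sigma F\|_\Sigma^2\le\frac{k}{8}\|\nabla F\|_M^2$, so the boundary edges contribute $Ck\|\nabla F\|_M^2$. Adding the three contributions and absorbing the dimension-free constants completes the proof.

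The main technical obstacle is the transversal step: one must compare an average over a boundary disk to an average over a manifold ball sitting at distance $4\epsilon$ from the boundary, two objects of very different dimensions embedded in the Riemannian collar. This is precisely what forced the introduction of the cylindrical Poincaré estimate of Lemma \ref{lemma:kanaiCylinder}, and it explains the two design choices in the discretization, namely the strip of width $4\epsilon$ separating $V_\Sigma$ from $V_I$ and the explicit use of the product structure near $\Sigma$, since without these features no single cylinder would contain both averaging domains.
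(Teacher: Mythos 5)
Your decomposition of $q(f)$ into interior, boundary and transversal edges is exactly the paper's, and your treatment of the interior and boundary sums (local Poincar\'e plus Bishop--Gromov for $E_I$; the intrinsic analogue on $\Sigma$ plus Corollary~\ref{coro:linearcombiFourrierStuff} for $E_\Sigma$, which is where the factor $k$ rightly enters) matches the paper's use of the arguments in~\cite[p.~178]{cha2}.

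The gap is in the transversal step, and it is precisely at the difficulty you yourself flag in your last paragraph. You claim that Lemma~\ref{lemma:kanaiCylinder} applied to an enlarged cylinder $\widetilde C_v$, ``combined with volume comparisons and Cauchy--Schwarz,'' delivers $(f(v)-f(v'))^2\le C\int_{\widetilde C_v}|\nabla F|^2$. But Lemma~\ref{lemma:kanaiCylinder} only controls $\int_{\widetilde C_v}|F-F_{\widetilde C_v}|$, i.e.\ the deviation of $F$ from its own $n$-dimensional average. To turn this into a bound on $|f(v')-F_{\widetilde C_v}|$ you do a volume comparison over the full-dimensional set $B_M(v',3\epsilon)$, and that works. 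There is, however, no such step available for $|f(v)-F_{\widetilde C_v}|$: the quantity $f(v)=\strokedint_{B_\Sigma(v,3\epsilon)}F$ is an average over a hypersurface of $n$-dimensional measure zero, so no volume comparison inside $\widetilde C_v$ can bound it by $\strokedint_{\widetilde C_v}|F-F_{\widetilde C_v}|$. What is needed is a trace estimate, and this is what the paper supplies and you omit: writing
$$f(v)-\hat f(v)=\strokedint_{C_v}\bigl[F(x,0)-F(x,t)\bigr]\,dA(x)\,dt$$
and applying the fundamental theorem of calculus in the collar direction gives $|f(v)-\hat f(v)|\le \frac{1}{|B_\Sigma(v,\delta)|}\int_{C_v}|\partial_r F|$, exploiting the product structure near $\Sigma$. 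Only after this step does the Kanai-type estimate (together with Croke's lower volume bound for the overlap $\beta=|B_M(v',3\epsilon)\cap C_v|$) close the argument. Without the trace/FTC step, your transversal bound does not go through.
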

\begin{proof}
  Given a boundary vertex $v\in
  V_\Sigma$, write $v'=(4\epsilon,v)\in V_I$ for the corresponding
  interior vertex. 
  The energy $q(f)$ is the sum of the following three quantities
\begin{gather*}
  E_1= \sum_{\substack{v \sim w \\ v, w \in V_I}} |f(v) - f(w)|^2, \\
  E_2=\sum_{\substack{v \sim w \\ v, w \in V_\Sigma}} |f(v) - f(w)|^2,\\
  E_3= \sum_{v \in V_\Sigma} |f(v) - f(v')|^2.
\end{gather*}

The first term $E_1$ is bounded using the same argument as in
\cite[$\mathcal{D}$:iii,p. 178]{cha2}: there exists a constant $A$ such that
$$E_1\leq A\|\nabla F\|_M^2.$$ 
To bound $E_2$ one also uses \cite[$\mathcal{D}$:iii,p. 178]{cha2} to obtain a
bound in terms of $\|\nabla^\Sigma F\|$ and then in terms of the
interior Dirichlet energy using Corollary \ref{coro:linearcombiFourrierStuff}: there exists a constant $B$ such that
$$E_2\leq Bk\|\nabla F\|_M^2.$$
Let us bound $E_3$. Let $\delta=3\epsilon$. Given $v\in V_\Sigma$,
define $\hat{f}(v)$ to be the average of $F$ on
the cylinder $C_v:=[0,\delta)\times B_\Sigma(v,\delta)\subset M:$
$$\hat{f}(v):=\strokedint_{C_v}F.$$
 It follows that
\begin{align*}
  (f(v) - f(v'))^2 & \leq 2\left((f(v) - \hat{f}(v))^2+(f(v')) - \hat{f}(v))^2\right).
\end{align*}
The fundamental theorem of calculus leads to
\begin{align*}
  |f(v) - \hat{f}(v)|
  & = \left|\strokedint_{C_v}[F(x,0)- F(x,t)] dA(x)dt\right|\\
  & \leq  \strokedint_{C_v}(\int_0^{\delta}
  \left|\partial_rF(x,r)\right|dr)\,dA(x)dt\\
 & = \strokedint_{B_{\Sigma}(v,\delta)} \int_0^{\delta}
 \left|\partial_rF(x,r)\right|dr\,dA(x)
=\frac{1}{|B(p,\delta)|}\int_{C_v}|\partial_rF|.
\end{align*}
The argument used to bound $|f(v')-\hat{f}(v)|$ is similar to that of
$\mathcal{D}$:i,~\cite[p. 178 ]{cha2}. Let
$\beta=|B_M(v',3\epsilon)\cap C_v|$ and observe that
\begin{align*}
  |f(v') - \hat{f}(v)| 
  & = \strokedint_{B_M(v',3\epsilon)\cap C_v}|f(v') - \hat{f}(v)| \\
  & \leq \strokedint_{B_\Sigma(w,3\epsilon)\cap C_v}
  |F-f(v')|+|F - \hat{f}(v)| \\
  &\leq \frac{1}{\beta}\left(\int_{C_v}|F - \hat{f}(v)|+\int_{B_\Sigma(w,2\epsilon)}|F-f(w)| \right).
\end{align*}
These two terms are bounded using Lemma \ref{lemma:kanaiCylinder} and
Kanai's inequality (\cite[p.177]{cha2}), so that
\begin{align*}
  |f(w) - \hat{f}(v)| <
  \frac{1}{\beta}(\Cr{kanaicyl}+\Cl{kanaioriginal})\int_C|\nabla F|.
\end{align*}
The crucial point is that $\beta$ is bounded below in terms of the
geometry. Indeed, let $p=(\{2\epsilon\},v)$. The ball
$B_M(p,\epsilon)\subset B_M(v',3\epsilon)\cap C_v$ and it follows from Croke's inequality [Croke1980] (See also Proposition V.2.3, \cite[136]{cha2}) that
$$\beta\geq |B_M(p,\epsilon)|\geq C'\epsilon^n,$$
where $C'$ is a constant which depends on only on the dimension $n$.
It follows that
$$E_3\leq C\|\nabla F\|_M^2.$$
The bounds on $E_1,E_2$ and $E_3$ lead to
$$q(f)\leq(A+Bk+C)\|\nabla F\|_M^2\leq(A+B+C)k\|\nabla F\|_M^2.$$
The proof is complete, with $\Cr{discretizationenergy}=A+B+C$.
\end{proof}

\subsection{Smoothing of discrete functions}
\label{section:smoothing}

The balls $B(v,3\epsilon)$ for $v \in V$ form an open cover of the manifold $M$. Indeed, it follows from the fact that $V_I$ is a maximal $\epsilon$-separated set in $M\setminus [0,4\epsilon)\times \Sigma$ that 
$$M\setminus [0,4\epsilon)\times \Sigma\subset\bigcup_{x\in V_I} B(x, 3\epsilon).$$
Now, if $x\in [0,4\epsilon)\times \Sigma$, there is $y \in \Sigma$ or $y \in \{4\epsilon\} \times \Sigma$ with $d(x,y)\le 2\epsilon$. But, by construction, there is a point $z\in V_{\Sigma}$ in the first case, or a point $z\in V_{\Sigma}'$ in the second case such that $d(y,z)\le \epsilon$, and we deduce that $d(z,x) \le 3 \epsilon$.

The existence of a partition of unity with controlled energy is a standard tool in geometric analysis. Nevertheless, we could not locate a construction completely adapted to our present context. For the sake of completeness, we therefore proved the following result.
\begin{lemma}
There exists a smooth partition of unity $\{\phi_v\}_{v\in V}\subset C^\infty(M)$ subbordinate to the cover $B(v,4s\epsilon)$ which satisfy the pointwise bound
$$|\nabla\phi_v|\leq A/\epsilon$$
where $A=A(\kappa_M)$ is a constant which depends only on the lower bounds on Ricci curvature.
\end{lemma}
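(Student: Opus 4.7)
The plan is to build bump functions $\psi_v$ adapted to each vertex, show they have bounded multiplicity and uniform positive lower bound on the sum, then normalize. First I would fix a smooth cutoff $\chi\in C^\infty(\R)$ with $\chi\equiv 1$ on $(-\infty,3]$, $\chi\equiv 0$ on $[4,\infty)$, $0\le \chi\le 1$, and $|\chi'|\le 2$, and tentatively define $\psi_v(x)=\chi(d_M(x,v)/\epsilon)$. Because $d_M(\cdot,v)$ is only Lipschitz, a preliminary technical step is to replace $d_M(\cdot,v)$ by a smooth approximation $\rho_v$ with $|\rho_v-d_M(\cdot,v)|<\epsilon/100$ and $|\nabla \rho_v|\le 2$ on $M$; on the cylindrical collar this can be done explicitly using the product structure, and away from the boundary (where $\mathrm{inj}_M>r_0$) one can mollify in normal coordinates at scale much smaller than $\epsilon$. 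Setting $\psi_v:=\chi(\rho_v/\epsilon)$ then gives a smooth function supported in $B(v,4\epsilon)$, identically $1$ on $B(v,(3-1/50)\epsilon)$, with the pointwise bound $|\nabla \psi_v|\le 2|\chi'|/\epsilon\le 4/\epsilon$.

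Next I would verify the covering and multiplicity properties. The discussion preceding the lemma shows that for every $x\in M$ there is some $v\in V$ with $d_M(x,v)\le 3\epsilon$, and by the slight enlargement we get $\psi_v(x)=1$ at that $v$, hence
\begin{equation*}
S(x):=\sum_{w\in V}\psi_w(x)\ge 1\qquad\text{for all }x\in M.
\end{equation*}
For an upper bound on $S$, at any $x\in M$ the set $\{w\in V:\psi_w(x)\ne 0\}$ is contained in $B(x,4\epsilon)$, and since $V_I$ is $\epsilon$-separated in the interior while $V_\Sigma$ is $\epsilon$-separated on $\Sigma$ (which carries the lower Ricci bound $-(n-2)\kappa$ and injectivity bound $r_0$ by H3) and H5)), the Bishop--Gromov volume comparison applied on $M$ and on $\Sigma$ gives a uniform constant $N=N(\kappa,n,\epsilon)$ bounding the cardinality of that set. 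Consequently $1\le S(x)\le N$ on $M$.

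I then set
\begin{equation*}
\phi_v:=\frac{\psi_v}{S}.
\end{equation*}
By construction $\{\phi_v\}_{v\in V}$ is a smooth partition of unity subordinate to $\{B(v,4\epsilon)\}_{v\in V}$. For the gradient bound, the quotient rule yields
\begin{equation*}
|\nabla \phi_v|\le \frac{|\nabla\psi_v|}{S}+\frac{\psi_v|\nabla S|}{S^2}\le |\nabla\psi_v|+|\nabla S|,
\end{equation*}
and since at each point at most $N$ terms of $S$ are nonzero with $|\nabla\psi_w|\le 4/\epsilon$, we obtain $|\nabla S|\le 4N/\epsilon$, so $|\nabla\phi_v|\le A/\epsilon$ with $A=4(N+1)$ depending only on $\kappa,n,\epsilon$ (and hence, once $\epsilon$ is fixed, only on the Ricci lower bound $\kappa$ in the sense stated).

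The main obstacle I expect is the smoothing of $d_M(\cdot,v)$: one has to produce a single smooth approximation $\rho_v$ with a uniform Lipschitz constant both near the boundary (where the manifold is a cylinder and $d_M$ splits) and in the interior (where one must mollify at a scale smaller than the injectivity radius but independent of the point). The cylindrical structure in H1) and the injectivity bound H4) are precisely what make this mollification quantitative; this is the only place where hypotheses H1)--H5) enter nontrivially in the construction.
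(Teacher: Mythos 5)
Your proof follows the same normalization scheme as the paper: build bump functions $\psi_v$ via a smooth cutoff applied to (a version of) the distance to $v$, observe $S=\sum_w\psi_w\ge 1$ using the fact that the balls $B(v,3\epsilon)$ cover $M$, invoke Bishop--Gromov for uniform local finiteness, and set $\phi_v=\psi_v/S$. The one genuine divergence is in how smoothness of $\psi_v$ is obtained. The paper applies the cutoff directly to $d_M(\cdot,v)$, noting that $\psi_v$ is locally constant both near $v$ and past the cut locus, which makes $\psi_v$ smooth provided the relevant injectivity radius exceeds $10\epsilon/3$ (the paper arranges $4\epsilon<r_0<\mathrm{inj}$). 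You instead pre-mollify $d_M(\cdot,v)$ to a smooth $\rho_v$ with $|\rho_v-d_M(\cdot,v)|<\epsilon/100$ and $|\nabla\rho_v|\le 2$, which sidesteps the cut-locus argument at the price of a controlled Greene--Wu-type smoothing that you only sketch (and which in the interior again ultimately relies on the injectivity radius lower bound H4). Both buy the same result; the paper's route is shorter, yours makes the role of the hypotheses more explicit.

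There is one small slip in the bookkeeping that you should fix. With $\chi\equiv 1$ on $(-\infty,3]$, $\chi\equiv 0$ on $[4,\infty)$, and the approximation $|\rho_v-d_M(\cdot,v)|<\epsilon/100$, the function $\psi_v=\chi(\rho_v/\epsilon)$ is only guaranteed to equal $1$ on $B(v,(3-\tfrac{1}{100})\epsilon)$, and its support may reach out to $B(v,(4+\tfrac{1}{100})\epsilon)$, slightly exceeding $B(v,4\epsilon)$. Consequently, ``for every $x$ there is $v$ with $d_M(x,v)\le 3\epsilon$, and by the slight enlargement $\psi_v(x)=1$'' does not follow as stated: for such $v$ you only get $\rho_v(x)<3\epsilon+\epsilon/100$, which is not $\le 3\epsilon$. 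The remedy is to widen the plateau and tighten the support of $\chi$ (for instance $\chi\equiv 1$ on $(-\infty,3.1]$ and $\chi\equiv 0$ on $[3.9,\infty)$); then $d_M(x,v)\le 3\epsilon$ gives $\rho_v(x)<3.01\epsilon\le 3.1\epsilon$ hence $\psi_v(x)=1$, and $\mathrm{supp}\,\psi_v\subset B(v,3.91\epsilon)\subset B(v,4\epsilon)$. This is exactly the slack the paper builds in by choosing $\chi\equiv 1$ on $[0,9/10]$ and scaling by $3/(10\epsilon)$, so that $\psi_v\equiv 1$ on $B(v,3\epsilon)$ and $\mathrm{supp}\,\psi_v\subset B(v,10\epsilon/3)$.
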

\begin{proof}
 We will construct a partition of unity $\{\phi_v\}$ subordinated to the open cover $\{B(v,4\epsilon)\}_{v\in V}$. Choose $4 \epsilon < r_0<\mbox{inj}(M)$. Note that with this choice, the covering is uniformly locally finite. Indeed, it follows from the theorem of Bishop-Gromov that a point is contained in a finite number of balls of the covering, and because $\epsilon< 1$, this number is bounded above by a constant $A_1(\kappa)$ depending only on the lower bound $\kappa$ of the Ricci curvature.


Let $\chi: \mathbb{R}_+ \rightarrow [0,1]$ be a smooth function with $\chi(t)=1$ if $t\le \frac{9}{10}$ and $\chi(t)= 0$ if $t \ge 1$. The family of functions $\{\psi_v\}_{v\in V}$ is defined by
$$
\psi_v(x)=\chi(\frac{3}{10 \epsilon}d(x,v)).
$$

Note that $\psi_v$ is of class $C^{\infty}$: for $x=v$, $d$ is not smooth, but $\psi_v$ is constant, and around the cut locus and after, where again $d$ may not be smooth, $\psi_v$ is constant, equal to $0$. The function $\psi_v$ is equal to $1$ on $B(v,3 \epsilon)$ and to $0$ outside of the ball $B(v,\frac{10\epsilon}{3})$.

\smallskip
As partition of unity, we choose $\{\phi_v\}_{v\in V}$, with
$$
\phi_v(x)= \frac{\psi_v(x)}{\sum_{w\in V} \psi_w(x)}.
$$

\noindent
\textbf{Control of the derivative of $\phi_v$.} We have
$$
\nabla \phi_v(x)= \frac{\nabla \psi_v(x) \sum_{w\in V} \psi_w(x)-\psi_v(x) \sum_{w\in V} \nabla \psi_w(x)}{(\sum_{w\in V} \psi_w(x))^2}
$$
with
$$
\vert \nabla \psi_v(x) \vert= (\max \vert \chi' \vert) \frac{3}{10 \epsilon}\vert \nabla d \vert \le \frac{3 A_2}{10 \epsilon}
$$
where $A_2=\max \vert \chi' \vert$.
As $\sum_{w\in V} \psi_w(x))^2 \ge 1$, it follows that
$$
\vert \nabla \phi_v(x) \vert \le \vert \nabla \psi_v(x) \vert \sum_{w\in V} \psi_w(x) +\psi_v(x) \sum_{w\in V} \vert \nabla \psi_w(x)\vert \le \frac{6A_1(\kappa)A_2}{10\epsilon}.
$$

\end{proof}

The \emph{smoothing} $F=S^Mf\in C^\infty(M)$ of
a function $f:V\rightarrow\mathbb{R}$ is defined by
$$F=\sum_{v\in V}f(v)\phi_v.$$ 

The following two Lemmas are very simple, but fundamental since they will allow the use of some arguments from \cite{cha2}.
\begin{lemma}\label{lemma:commute}
The functions $\phi_v\mid_\Sigma$ with $v\in V_\Sigma$ form a partition of unity on the boundary $\Sigma$. This allows the definition of the smoothing operator
  $$S^\Sigma:\ell^2(V_\Sigma)\rightarrow C^\infty(\Sigma),$$
  which commutes with restriction:
  \begin{gather}\label{equation:smoothingrestrictioncommute}
    S^\Sigma\left(f\Bigl|\Bigr._{V_\Sigma}\right)=(Sf)\Bigl|\Bigr._{\Sigma},\qquad\forall f\in\ell^2(V).
  \end{gather}
\end{lemma}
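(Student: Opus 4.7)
The plan is to reduce both statements to a single geometric observation: for every interior vertex $v \in V_I$, the bump function $\phi_v$ vanishes identically on $\Sigma$. To establish this, recall that by construction of the discretization, $V_I \subset M \setminus [0,4\epsilon)\times\Sigma$, so $d_M(v,\Sigma) \geq 4\epsilon$ for every $v \in V_I$ (including the points of $V_\Sigma'$, which sit on the cylinder at distance exactly $4\epsilon$). The cutoff $\psi_v(x)=\chi(\tfrac{3}{10\epsilon}d(x,v))$ is supported in the ball $B(v, 10\epsilon/3)$, and since $10\epsilon/3 < 4\epsilon$ this support is disjoint from $\Sigma$. As $\phi_v = \psi_v/\sum_w\psi_w$, the vanishing of $\psi_v$ on $\Sigma$ forces $\phi_v|_\Sigma \equiv 0$.

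Granted this, the partition-of-unity claim is immediate: evaluating the global identity $\sum_{v \in V}\phi_v \equiv 1$ at an arbitrary point $x \in \Sigma$ yields
$$1 \;=\; \sum_{v\in V_\Sigma}\phi_v(x) \,+\, \sum_{v\in V_I}\phi_v(x) \;=\; \sum_{v\in V_\Sigma}\phi_v(x),$$
and each $\phi_v|_\Sigma$ is smooth and non-negative. Hence $\{\phi_v|_\Sigma\}_{v\in V_\Sigma}$ is a smooth partition of unity on $\Sigma$. This legitimises the definition
$$S^\Sigma g \;:=\; \sum_{v \in V_\Sigma} g(v)\,\phi_v|_\Sigma, \qquad g \in \ell^2(V_\Sigma).$$

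For the commutation identity I would simply unwind the definitions: for $f \in \ell^2(V)$ and $x \in \Sigma$,
$$(Sf)|_\Sigma(x) \;=\; \sum_{v \in V} f(v)\phi_v(x) \;=\; \sum_{v \in V_\Sigma}f(v)\phi_v|_\Sigma(x) \;=\; S^\Sigma\!\left(f|_{V_\Sigma}\right)(x),$$
the middle equality again using the vanishing of $\phi_v|_\Sigma$ for $v \in V_I$. The only non-trivial step is the support estimate $10\epsilon/3 < 4\epsilon$ that separates the interior cutoffs from the boundary; this is precisely what the $4\epsilon$-buffer built into the definition of $V_I$, together with the specific choice of $3/10$ in the construction of $\psi_v$, was designed to provide, so in the end there is no serious obstacle.
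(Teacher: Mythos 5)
Your proof is correct and takes essentially the same approach as the paper, whose own proof is the single observation that $\phi_v$ for $v\in V_I$ is supported away from $\Sigma$. You simply make the support computation $10\epsilon/3 < 4\epsilon$ (coming from the choice of $\chi$ and the $4\epsilon$-buffer in the definition of $V_I$) explicit and then unwind the definitions, which is exactly what the paper leaves implicit.
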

\begin{proof}
  This follows directly from the fact that a function $\phi_v$ for which $v\in V_I$ is supported away from $\Sigma$. 
\end{proof}

\begin{lemma}\label{lemma:SDvsDS} Let $f$ be a function on $V$. If $v \in V_{\Sigma}$, we have
$$
(\mathcal D \mathcal S f)(v)=(\mathcal D^{\Sigma} \mathcal S^{\Sigma} f)(v)
$$
where $\mathcal D^{\Sigma}$ and $\mathcal S^{\Sigma} $ denote the discretization and the smoothing on $\Sigma$ with the induced Riemannian metric.

\medskip
If $F$ is a differentiable function on $M$ and $x \in \Sigma$, we have
$$
(\mathcal S \mathcal D F)(x)=(\mathcal S^{\Sigma} \mathcal D^{\Sigma} F)(x)
$$
\end{lemma}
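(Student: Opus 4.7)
The plan is to unwind the definitions of $\mathcal{D},\mathcal{S},\mathcal{D}^\Sigma,\mathcal{S}^\Sigma$ and combine them with the preceding Lemma \ref{lemma:commute}. Both identities should follow by a book-keeping argument, reflecting the fact that the whole construction near the boundary has been rigged so that boundary vertices only ``see'' boundary data.

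For the first identity, I would start from $v\in V_\Sigma$ and recall that by definition of $\mathcal{D}$ on boundary vertices,
\begin{equation*}
(\mathcal{D}\mathcal{S}f)(v)=\strokedint_{B_\Sigma(v,3\epsilon)}\mathcal{S}f=\strokedint_{B_\Sigma(v,3\epsilon)}(\mathcal{S}f)\Bigl|\Bigr._\Sigma.
\end{equation*}
By Lemma \ref{lemma:commute}, $(\mathcal{S}f)|_\Sigma=\mathcal{S}^\Sigma(f|_{V_\Sigma})$, which is a function intrinsic to $\Sigma$. Then the right hand side is by definition $(\mathcal{D}^\Sigma\mathcal{S}^\Sigma f)(v)$.

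For the second identity, I would expand the smoothing at $x\in\Sigma$ as $(\mathcal{S}\mathcal{D}F)(x)=\sum_{v\in V}(\mathcal{D}F)(v)\phi_v(x)$. Since $\phi_v$ for $v\in V_I$ is supported away from $\Sigma$ (as observed in the proof of Lemma \ref{lemma:commute}), only boundary vertices contribute:
\begin{equation*}
(\mathcal{S}\mathcal{D}F)(x)=\sum_{v\in V_\Sigma}(\mathcal{D}F)(v)\,\phi_v(x).
\end{equation*}
For $v\in V_\Sigma$, the definition of $\mathcal{D}$ gives $(\mathcal{D}F)(v)=\strokedint_{B_\Sigma(v,3\epsilon)}F|_\Sigma=(\mathcal{D}^\Sigma F|_\Sigma)(v)$. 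Since $\{\phi_v|_\Sigma\}_{v\in V_\Sigma}$ is the partition of unity on $\Sigma$ underlying $\mathcal{S}^\Sigma$, the last expression equals $(\mathcal{S}^\Sigma\mathcal{D}^\Sigma F|_\Sigma)(x)$.

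There is no genuine obstacle; the proof is a direct verification from the definitions, where the only two ingredients used are (i) the averaging defining $\mathcal{D}$ on boundary vertices is taken over a ball in $\Sigma$, not in $M$, and (ii) the interior partition of unity functions vanish on $\Sigma$, so that $\mathcal{S}$ restricted to $\Sigma$ coincides with $\mathcal{S}^\Sigma$ applied to boundary data, which is precisely Lemma \ref{lemma:commute}.
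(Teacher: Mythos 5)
Your proof is correct and takes essentially the same route as the paper: both rest on the two observations that (i) $\mathcal{D}$ at a boundary vertex averages over a ball in $\Sigma$, so $(\mathcal{D}F)(v)=(\mathcal{D}^\Sigma F|_\Sigma)(v)$ for $v\in V_\Sigma$, and (ii) by Lemma \ref{lemma:commute} the smoothing restricted to $\Sigma$ agrees with $\mathcal{S}^\Sigma$ applied to boundary data. The paper states these two facts and concludes ``immediately''; you merely spell out the substitution in each direction, which is fine.
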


\begin{proof}
The proof is a direct consequence of the definition of the discretization we associate to $M$ and of the way to discretize and to smooth.

The restriction to $\Sigma$ of the smoothing of $f$ takes only account of the values of $f$ at point of $V_{\Sigma}$ and the restriction of the partition of unity we have defined is a partition of unity on $\Sigma$. So, for $x \in \Sigma$, we have $(\mathcal S f)(x)=(\mathcal S^{\Sigma} f)(x)$.

By definition, the same is true for the discretization because in order to discretize a function $F$ at points of $V_{\Sigma}$, we take the mean of $F$ restricted to balls of $\Sigma$. So, for $v\in V_{\Sigma}$, we have $(\mathcal D F)(v)=(\mathcal D^{\Sigma} F)(v)$.

From these two facts, we deduce immediatly the lemma.
\end{proof}

\begin{lemma}\label{lemma:basicsmoothing}
There exist constants $\Cl{basicsmoothing1}, \Cl{basicsmoothing2},\Cl{basicsmoothing3}$ depending only on $\kappa,r_0,n$ such that any $f\in\ell^2(V)$ satisfies
  \begin{gather*}
    \|Sf\|_M^2\leq \Cr{basicsmoothing1}\|f\|_V^2,\\\nonumber
    \|\nabla Sf\|_M^2\leq\Cr{basicsmoothing2}q(f),\\\nonumber
	\|Sf\|_{\Sigma}^2\leq \Cr{basicsmoothing3}\|f\|_{V_\Sigma}^2.\nonumber
  \end{gather*}
\end{lemma}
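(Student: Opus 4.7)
The three inequalities are standard estimates for smoothings against a controlled partition of unity, and the plan is to establish them in order, in each case exploiting the uniform local finiteness of the covering $\{B(v,4\epsilon)\}_{v\in V}$ (which follows from Bishop--Gromov, as noted after the definition of the discretization) together with the gradient bound $|\nabla\phi_v|\le A/\epsilon$ from the previous lemma.

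For the first inequality, I would start from the pointwise identity $Sf(x)=\sum_v f(v)\phi_v(x)$ and apply Cauchy--Schwarz using $\sum_v\phi_v(x)=1$, obtaining
$$|Sf(x)|^2\le \sum_v f(v)^2\phi_v(x).$$
Integrating over $M$ and using $0\le \phi_v\le \mathbf{1}_{B(v,4\epsilon)}$ together with the Bishop--Gromov upper bound on $|B(v,4\epsilon)|$ (depending only on $\kappa,n,\epsilon$) yields the desired estimate, with $C_{\text{basicsmoothing1}}$ depending only on $\kappa,r_0,n$.

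For the gradient estimate (which I expect to be the main obstacle), I would use the key identity $\sum_v\nabla\phi_v=0$ coming from $\sum_v\phi_v\equiv 1$. Hence for any reference vertex $v_0$,
$$\nabla Sf(x)=\sum_{v}\bigl(f(v)-f(v_0)\bigr)\nabla\phi_v(x).$$
For each $x\in M$, I would choose $v_0=v_0(x)\in V$ so that $x\in B(v_0,4\epsilon)$ (which exists by Lemma~\ref{lemma:discretizationsRoughlyIso}). Only vertices $v$ with $x\in B(v,4\epsilon)$ contribute, so for each such $v$ one has $d_M(v,v_0)<8\epsilon$, and consequently, by Lemma~\ref{lemma:discretizationsRoughlyIso} and the bound on graph degree, $d_\Gamma(v_0,v)$ is bounded by a constant $N_0=N_0(\kappa,r_0,n,\epsilon)$, while the number of such $v$ is likewise bounded. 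Using $|\nabla\phi_v|\le A/\epsilon$, Cauchy--Schwarz, and telescoping $f(v)-f(v_0)$ along a shortest graph path from $v_0$ to $v$ of length at most $N_0$, I would obtain
$$|\nabla Sf(x)|^2\le \frac{C}{\epsilon^2}\sum_{(w,w')\in E_0(x)}\bigl(f(w)-f(w')\bigr)^2,$$
where $E_0(x)$ is the finite set of edges appearing in the telescopings. The final step is to integrate over $M$ and to interchange the sum and the integral: each edge $(w,w')\in E$ is counted only for $x$ in a bounded union of balls $B(v,4\epsilon)$, whose total volume is bounded by a uniform constant times $\epsilon^n$. Combined with the $1/\epsilon^2$ in front, this yields $\|\nabla Sf\|_M^2\le C_{\text{basicsmoothing2}}\, q(f)$.

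For the third inequality, I would invoke Lemma~\ref{lemma:commute}, which gives $(Sf)|_\Sigma=S^\Sigma(f|_{V_\Sigma})$, and then run the argument of the first inequality on $\Sigma$ in place of $M$. Since $\Sigma$ itself has bounded geometry under hypotheses H3 and H5, and $\{\phi_v|_\Sigma\}_{v\in V_\Sigma}$ is a partition of unity on $\Sigma$ subordinate to $\{B_\Sigma(v,4\epsilon)\}$, Cauchy--Schwarz together with the Bishop--Gromov volume bound on $\Sigma$ produces the estimate with a constant $C_{\text{basicsmoothing3}}$ depending only on $\kappa,r_0,n$. The second inequality is the essential step; the other two are near-verbatim repetitions of the $L^2$ computation on $M$ and on $\Sigma$.
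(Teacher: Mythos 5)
Your proposal is correct and follows essentially the same route as the paper: the paper simply cites paragraphs $(\mathcal{S}\!:\!ii)$ and $(\mathcal{S}\!:\!iii)$ of Chavel, Section VI.5.2, for the first two inequalities, and you have unpacked what those standard arguments are (Cauchy--Schwarz against the partition of unity with bounded local overlap, and the $\sum_v \nabla\phi_v = 0$ trick with telescoping along short graph paths). For the third inequality your reduction via Lemma~\ref{lemma:commute} to the closed manifold $\Sigma$ is exactly what the paper does.
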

\begin{proof}
   The proof of the two first inequalities follows exactly the same arguments as that of paragraph $(\mathcal{S}:ii)$
  and $(\mathcal{S}:iii)$ in~\cite[section VI.5.2]{cha2}.
	
	\smallskip
	For the last inequality, we use the observation of Lemma \ref{lemma:commute}: using again paragraph $(\mathcal{S}:ii)$
  and $(\mathcal{S}:iii)$ in~\cite[section VI.5.2]{cha2}, we get that
	$$
	\|S^{\Sigma}(f\Bigl|\Bigr._{V_\Sigma})\|_{\Sigma}\leq \Cl{basicsmoothing4}\|f\|_{V_{\Sigma}}.
	$$
	
	But we have
	$$
	S^{\Sigma}(f\Bigl|\Bigr._{V_\Sigma})=(Sf)\Bigl|\Bigr._{\Sigma}
	$$
	and
	$$
	\|fs\|_{V_{\Sigma}}=\|f\Bigl|\Bigr._{V_\Sigma}\|_{V}.
	$$
\end{proof}

\section{Proof of the main comparison inequality}\label{section:proofmaincomparison}
The proof of Theorem~\ref{theorem:MainSpectralComparison} is broken
down between Proposition \ref{prop:down} and Proposition \ref{prop:up}.

\begin{prop}\label{prop:down}
  There exists a constant $\Cl{halftruth}$ depending only on $\kappa,r_0, n$ such that 
  $$\sigma_k(\Gamma,V_\Sigma)\leq \Cr{halftruth}k\sigma_k(M).$$
\end{prop}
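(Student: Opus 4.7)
The plan is to apply the variational characterization \eqref{discretesteklov:variational} to $\sigma_k(\Gamma,V_\Sigma)$, testing on the $k$-dimensional subspace of $\ell^2(V)$ spanned by the discretizations of the first $k$ Steklov eigenfunctions of $M$. Concretely, choose Steklov eigenfunctions $F_1,\ldots,F_k\in C^\infty(M)$ normalized so that $(F_i|_\Sigma)_{i=1}^k$ is orthonormal in $L^2(\Sigma)$, with eigenvalues $\sigma_1\leq\cdots\leq\sigma_k=\sigma_k(M)$. Set $f_i=DF_i\in\ell^2(V)$, $E=\mathrm{span}(f_1,\ldots,f_k)$, and for $f=\sum_{i=1}^k a_if_i=DF$ with $F=\sum a_iF_i$ I aim to prove
\begin{equation*}
q(f)\leq C\,k\sigma_k(M)\sum_i a_i^2 \qquad \text{and} \qquad \|f\|_{V_\Sigma}^2\geq c\sum_i a_i^2.
\end{equation*}

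The first of these is immediate: assuming $\sigma_k(M)\leq 1/4$ (the other regime is discussed at the end), Lemma~\ref{lemma:discretizationenergybound} combined with the orthogonality relation $\int_M\langle\nabla F_i,\nabla F_j\rangle=\sigma_i\delta_{ij}$ gives $q(f)\leq \Cr{discretizationenergy}k\|\nabla F\|_M^2=\Cr{discretizationenergy}k\sum a_i^2\sigma_i\leq \Cr{discretizationenergy}k\sigma_k(M)\sum a_i^2$. The second bound is the heart of the argument; it simultaneously forces $E$ to be genuinely $k$-dimensional.

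For the lower bound on $\|f\|_{V_\Sigma}^2$, I would compare $F$ on $\Sigma$ with the boundary smoothing of $f$. Lemma~\ref{lemma:commute} identifies $(Sf)|_\Sigma$ with $S^\Sigma(f|_{V_\Sigma})$, and the third estimate of Lemma~\ref{lemma:basicsmoothing} gives $\|S^\Sigma f\|_\Sigma^2\leq \Cr{basicsmoothing3}\|f\|_{V_\Sigma}^2$. The pointwise difference $F(x)-S^\Sigma DF(x)$ is a convex combination of terms $F(x)-\strokedint_{B_\Sigma(v,3\epsilon)}F$ over the boundary vertices $v$ with $x\in \mathrm{supp}(\phi_v)$, so a Kanai-style $L^2$ Poincaré inequality on the balls $B_\Sigma(v,3\epsilon)$, together with their uniform local finiteness, yields
\begin{equation*}
\|F-S^\Sigma DF\|_\Sigma^2\leq C'\epsilon^2\,\|\nabla^\Sigma F\|_\Sigma^2.
\end{equation*}
At this point Corollary~\ref{coro:linearcombiFourrierStuff}, which trades the boundary gradient for the interior Dirichlet energy, gives $\|\nabla^\Sigma F\|_\Sigma^2\leq \tfrac{k}{8}\|\nabla F\|_M^2\leq \tfrac{k\sigma_k(M)}{8}\|F\|_\Sigma^2$. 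Substituting and using $\|F\|_\Sigma^2\leq 2\|F-S^\Sigma DF\|_\Sigma^2+2\|S^\Sigma DF\|_\Sigma^2$ produces an inequality of the form
\begin{equation*}
\|F\|_\Sigma^2\leq C_1\,k\sigma_k(M)\,\|F\|_\Sigma^2 + C_2\,\|f\|_{V_\Sigma}^2,
\end{equation*}
from which, once $k\sigma_k(M)$ lies below a threshold depending only on $\kappa,r_0,n,\epsilon$, the first term can be absorbed to give $\sum a_i^2=\|F\|_\Sigma^2\leq 2C_2\|f\|_{V_\Sigma}^2$. Dividing the energy estimate by this norm estimate yields the proposition in this regime.

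The main obstacle is exactly this absorption, which forces a dichotomy on the size of $k\sigma_k(M)$. When $k\sigma_k(M)$ exceeds the threshold (in particular when $\sigma_k(M)\geq 1/4$, where Lemma~\ref{lemma:discretizationenergybound} and Corollary~\ref{coro:linearcombiFourrierStuff} stop being available), the desired inequality $\sigma_k(\Gamma,V_\Sigma)\leq \Cr{halftruth}k\sigma_k(M)$ becomes easy because the right-hand side is bounded below by a positive constant depending only on the geometry. One then only needs an a priori upper bound on $\sigma_k(\Gamma,V_\Sigma)$, which follows from the uniform control of the maximum degree of $\Gamma$ (via Bishop--Gromov) combined with a discrete maximum principle applied to graph-Steklov eigenfunctions, which are discrete-harmonic on $V_I$ and hence allow one to pass from $\|f\|_V^2$ to $\|f\|_{V_\Sigma}^2$ in the Rayleigh quotient. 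Gluing the two regimes gives the proposition with a constant $\Cr{halftruth}$ depending only on $\kappa,r_0,n$ (and on the fixed mesh parameter $\epsilon$).
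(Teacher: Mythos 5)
Your overall architecture matches the paper's exactly: test $\sigma_k(\Gamma,V_\Sigma)$ on $E=\mathrm{span}(DF_1,\dots,DF_k)$, bound the numerator with Lemma~\ref{lemma:discretizationenergybound}, prove an absorption-type lower bound on $\|DF\|_{V_\Sigma}$ (this is precisely the paper's Lemma~\ref{lemma:discretizationInjective}, proved by the same chain you write out: Lemma~\ref{lemma:basicsmoothing} to pass to $\|SDF\|_\Sigma$, a Kanai/Chavel Poincar\'e estimate to control $\|F-SDF\|_\Sigma$ by $\|\nabla^\Sigma F\|_\Sigma$, then Corollary~\ref{coro:linearcombiFourrierStuff}), and split on whether $k\sigma_k(M)$ is below or above a threshold $\Cr{treshold1}$. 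The small-$k\sigma_k$ regime in your writeup is correct and essentially verbatim what the paper does.

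The gap is in the large-$k\sigma_k$ regime. You claim an a priori bound $\sigma_k(\Gamma,V_\Sigma)\le C$ follows from the degree bound plus a discrete maximum principle applied to Steklov eigenfunctions, ``which are discrete-harmonic on $V_I$ and hence allow one to pass from $\|f\|_V^2$ to $\|f\|_{V_\Sigma}^2$.'' This does not work as stated. First, the Rayleigh quotient already has $\|f\|_{V_\Sigma}^2$ in the denominator, and for an eigenfunction one has $q(f)=\sigma_k\|f\|_{V_\Sigma}^2$ \emph{by definition}, so estimating the Rayleigh quotient on eigenfunctions is circular. Second, the maximum principle gives $\|f\|_{\ell^\infty(V)}\le\|f\|_{\ell^\infty(V_\Sigma)}$, which does \emph{not} control $\|f\|_V^2/\|f\|_{V_\Sigma}^2$, since the number of interior vertices is unbounded across the class $\mathcal M$. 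The fix, which is also what the paper does, is much simpler and avoids eigenfunctions entirely: test the min--max for $\sigma_{|V_\Sigma|}(\Gamma,V_\Sigma)$ on the $|V_\Sigma|$-dimensional space of functions supported on $V_\Sigma$ (extended by zero). For such $f$ one has $q(f)\le 2\nu\|f\|_{V_\Sigma}^2$ with $\nu$ the maximum degree (controlled by Bishop--Gromov), hence $\sigma_{|V_\Sigma|}(\Gamma,V_\Sigma)\le 2\nu$, and monotonicity gives the bound for all $k\le|V_\Sigma|$. Plugging into $\sigma_k(\Gamma,V_\Sigma)\le \Cr{treshold1}^{-1}k\sigma_k(M)\,\sigma_k(\Gamma,V_\Sigma)$ then closes this case. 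With that replacement your proof is the paper's.
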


Let $F_1,\cdots,F_k$ be Steklov eigenfunctions on $M$. The space
$$E_k:=\mbox{span}(DF_1,\cdots,DF_k)$$ will be used in the variational
characterization of $\sigma_k(\Gamma,V_\Sigma)$. One first needs to
ensure that this space is $k$-dimensional. This is not true in
general, but will hold for  low energy functions thanks to the
following quantitative injectivity property.
\begin{lemma}\label{lemma:discretizationInjective}
  There is a constant $0<\Cl{treshold1}<1/4$ with the usual dependance such that
  $\sigma_k(M)k<\Cr{treshold1}$ implies
  \begin{align}
    \|DF\|_{V_\Sigma}\geq \frac{\Cr{basicsmoothing1}^{-1}}{2}\|F\|_\Sigma.
  \end{align}
\end{lemma}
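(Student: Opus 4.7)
The plan is to show that $D$ is quantitatively injective on combinations of low Steklov eigenfunctions by comparing $F$ with the roundtrip $SDF$ restricted to the boundary. Write $F=\sum_{i=1}^{k} a_i F_i$, with the eigenfunctions $F_i$ normalized so that $\{F_i\mid_\Sigma\}$ is $L^2(\Sigma)$-orthonormal; Green's formula (as in the proof of Corollary \ref{coro:linearcombiFourrierStuff}) then gives $\|F\|_\Sigma^2=\sum a_i^2$ and $\|\nabla F\|_M^2=\sum a_i^2\sigma_i\le \sigma_k(M)\|F\|_\Sigma^2$. Under the hypothesis $k\sigma_k(M)<\Cr{treshold1}<1/4$, the assumption $\sigma_k\le 1/4$ of Corollary \ref{coro:linearcombiFourrierStuff} holds, and one obtains
$$\|\nabla^\Sigma F\|_\Sigma^2\le \frac{k}{8}\|\nabla F\|_M^2\le \frac{k\sigma_k(M)}{8}\|F\|_\Sigma^2.$$

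Next I exploit Lemma \ref{lemma:commute}: the restriction $(SDF)\mid_\Sigma$ equals $S^\Sigma D^\Sigma(F\mid_\Sigma)$, so the entire analysis can be carried out on the closed Riemannian manifold $\Sigma$, which satisfies the Ricci and injectivity bounds (H3)--(H5). A suitable adaptation to $\Sigma$ of the roundtrip estimate of type $\mathcal{SD}$ in \cite[Section VI.5]{cha2} provides a constant $C'$ (depending only on $\kappa,r_0,n,\epsilon$) such that every $G\in C^\infty(\Sigma)$ satisfies
$$\|S^\Sigma D^\Sigma G - G\|_\Sigma \le C'\|\nabla^\Sigma G\|_\Sigma.$$
Combined with the boundary estimate $\|SDF\|_\Sigma\le \sqrt{\Cr{basicsmoothing3}}\,\|DF\|_{V_\Sigma}$ from Lemma \ref{lemma:basicsmoothing} and the triangle inequality, this yields
$$\|F\|_\Sigma \le \|SDF\|_\Sigma + \|SDF-F\|_\Sigma \le \sqrt{\Cr{basicsmoothing3}}\,\|DF\|_{V_\Sigma} + C'\sqrt{k\sigma_k(M)/8}\,\|F\|_\Sigma.$$

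It then suffices to choose $\Cr{treshold1}$ small enough, depending only on $\kappa,r_0,n$ through $C'$, so that $C'\sqrt{\Cr{treshold1}/8}\le 1/2$; absorbing the second term on the right into the left side gives a lower bound $\|DF\|_{V_\Sigma}\ge c\,\|F\|_\Sigma$ of the claimed form (up to relabelling the constant in front). The main obstacle I anticipate is establishing the boundary roundtrip inequality for $S^\Sigma D^\Sigma$ on $\Sigma$: since $\Sigma$ is a closed Riemannian manifold satisfying the relevant bounds and since Lemma \ref{lemma:commute} identifies the boundary restriction of the smoothing and discretization with their intrinsic analogues on $\Sigma$, Chavel's arguments transfer, but one must verify that all constants depend only on the parameters defining the class $\mathcal{M}(\kappa,r_0,n)$ and on $\epsilon$.
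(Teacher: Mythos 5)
Your proposal is correct and follows essentially the same route as the paper's proof: bound $\|F\|_\Sigma$ by $\|SDF\|_\Sigma+\|F-SDF\|_\Sigma$, control $\|SDF\|_\Sigma$ via the boundary smoothing estimate of Lemma \ref{lemma:basicsmoothing}, control the roundtrip error by $\|\nabla^\Sigma F\|_\Sigma$ via the Chavel argument transported to the closed manifold $\Sigma$, and then use Corollary \ref{coro:linearcombiFourrierStuff} to absorb that term under the hypothesis $k\sigma_k<\Cr{treshold1}$. The only differences are cosmetic: you make explicit the role of Lemma \ref{lemma:commute} in reducing the $SD$-roundtrip estimate to $\Sigma$ (the paper leaves this implicit when citing Chavel), and you use the boundary constant $\Cr{basicsmoothing3}$ where the paper's text writes $\Cr{basicsmoothing1}$, which appears to be a slip on the paper's part.
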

\begin{proof}[Proof of Lemma \ref{lemma:discretizationInjective}]
  Observe that
  Lemma~\ref{lemma:basicsmoothing} and the triangle inequality implies
  \begin{align}\label{inequality:goingdown1}
    \|DF\|_{V_\Sigma}\geq \Cr{basicsmoothing1}^{-1}\|SDF\|_\Sigma\geq \Cr{basicsmoothing1}^{-1}(\|F\|_\Sigma-\|F-SDF\|_\Sigma).
  \end{align}
  The argument in \cite[p. 183]{cha2} shows that 
  $$\|F - SDF\|_\Sigma \leq \Cl{SDF}\|\nabla F\|_\Sigma,$$
  for some constant $\Cr{SDF}$.
  Together with inequality~(\ref{inequality:goingdown1}) and Corollary \ref{coro:linearcombiFourrierStuff} this leads to
  \begin{align}
    \|DF\|_{V_\Sigma}&\geq \Cr{basicsmoothing1}^{-1}(\|F\|_\Sigma-\Cr{SDF}\|\nabla^{\Sigma} F\|_\Sigma)\\
    &=\Cr{basicsmoothing1}^{-1}\|F\|_\Sigma\|(1-\sqrt{k}\Cr{SDF}\frac{\|\nabla^{\Sigma}F\|_M}{\|F\|_\Sigma})
    \geq
    \|F\|_\Sigma\Cr{basicsmoothing1}^{-1}\|(1-\Cr{SDF}\sqrt{k\sigma_k}).
  \end{align}
  One can therefore take $\Cr{treshold1}=\min((2\Cr{SDF})^{-2},1/4)$.
\end{proof}

Everything is now in place for the proof of Proposition~\ref{prop:down}
\begin{proof}[Proof of Proposition~\ref{prop:down}]
  In the situation that $\sigma_k(M)k<\Cr{treshold1}$, it follows from
Lemma~\ref{lemma:discretizationInjective} that the space $E_k$ is
$k$-dimensional and because $C_9<1/4$,
Lemma \ref{lemma:discretizationenergybound} leads to
\begin{align*}
  \sigma_k(\Gamma,V_\Sigma)\leq\frac{q(DF)}{\|DF\|_\Sigma^2}\leq
  4\Cr{discretizationenergy}\Cr{basicsmoothing1}^2\frac{\|\nabla F\|_M^2}{\|F\|_\Sigma^2}\leq 4\Cr{discretizationenergy}\Cr{basicsmoothing1}^2\sigma_k(M).
\end{align*}

On the other hand, if $\sigma_k(M)k\geq \Cr{treshold1}$, then one has
$$\sigma_k(\Gamma,V_\Sigma)\leq k\Cr{treshold1}^{-1}\sigma_k(M)\sigma_k(\Gamma,V_\Sigma).$$
Now, let $\nu$ be the maximal degree of a vertex $v\in V_\Sigma$. Then for
$k=|V_\Sigma|$, one has $\sigma_k(\Gamma) \leq \nu,$ so that
$$\sigma_k(\Gamma,V_\Sigma)\leq \Cr{treshold1}^{-1}k\nu\sigma_k(M).$$

It follows that
$$\sigma_k(\Gamma,V_\Sigma)\leq
(4\Cr{discretizationenergy}\Cr{basicsmoothing1}^2+\Cr{treshold1}^{-1}k\nu)\sigma_k(M)
\leq (4\Cr{discretizationenergy}\Cr{basicsmoothing1}^2+\Cr{treshold1}^{-1}\nu)k\sigma_k(M)$$
and one concludes by setting
$A=(4\Cr{discretizationenergy}\Cr{basicsmoothing1}^2+\Cr{treshold1}^{-1}\nu)$.
\end{proof}

We are know ready to move on to the second comparison inequality.
\begin{prop}\label{prop:up}
  There exists a constant $B$ such that the following holds for each $k\leq |V_\Sigma|$:
  $$\sigma_k(M)\leq B\sigma_k(\Gamma,V_\Sigma).$$
\end{prop}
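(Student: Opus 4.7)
The plan is to mirror the structure of the proof of Proposition \ref{prop:down}, but in the opposite direction: I would use smoothed graph eigenfunctions as test functions in the variational characterization of $\sigma_k(M)$. Let $f_1,\dots,f_k \in \ell^2(V_\Sigma)$ be a family of graph Steklov eigenfunctions for $\sigma_1(\Gamma,V_\Sigma),\dots,\sigma_k(\Gamma,V_\Sigma)$, orthonormal in $\ell^2(V_\Sigma)$, set $F_i := Sf_i \in C^\infty(M)$, and consider $E_k := \mathrm{span}(F_1,\dots,F_k) \subset H^1(M)$. Two regimes will be distinguished, according to whether $\sigma_k(\Gamma,V_\Sigma)$ is smaller or larger than a universal threshold $c>0$.

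The crux of the argument is an injectivity estimate complementary to Lemma \ref{lemma:basicsmoothing}: for every $f$ in the span of $f_1,\dots,f_k$,
$$\|Sf\|_\Sigma^2 \geq c_1 \|f\|_{V_\Sigma}^2$$
whenever $\sigma_k(\Gamma,V_\Sigma) < c$. To prove this I would decompose $f = DSf + (f - DSf)$ on $V_\Sigma$ and bound each term. For the first, Jensen's inequality applied to the averaging that defines $D$, together with the lower bound on $|B_\Sigma(v,3\epsilon)|$ and bounded multiplicity of the cover (both from Bishop--Gromov and the $\epsilon$-separation of $V_\Sigma$), yields $\|DSf\|_{V_\Sigma}^2 \leq C_2 \|Sf\|_\Sigma^2$. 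For the second, Lemma \ref{lemma:SDvsDS} gives $(DSf)|_{V_\Sigma} = D^\Sigma S^\Sigma(f|_{V_\Sigma})$, which reduces the estimate to the closed manifold $\Sigma$, where the standard discretization--smoothing bound from \cite[Section VI.5]{cha2} yields $\|f - DSf\|_{V_\Sigma}^2 \leq C_3\, q_\Sigma(f|_{V_\Sigma}) \leq C_3\, q(f)$, the last step using $q_\Sigma \leq q$. Since $f = \sum a_i f_i$ with the $f_i$ orthonormal and mutually $q$-orthogonal, $q(f) = \sum a_i^2 \sigma_i \leq \sigma_k(\Gamma,V_\Sigma)\|f\|_{V_\Sigma}^2$, so taking $c = 1/(4C_3)$ gives the asserted lower bound with $c_1 = 1/(4C_2^2)$.

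Once injectivity is established, $E_k$ is $k$-dimensional and no nonzero $F\in E_k$ vanishes on $\Sigma$. For $F = Sf \in E_k$, Lemma \ref{lemma:basicsmoothing} combined with the previous bound gives
$$\|\nabla F\|_M^2 \leq \Cr{basicsmoothing2}\, q(f) \leq \Cr{basicsmoothing2}\,\sigma_k(\Gamma,V_\Sigma)\|f\|_{V_\Sigma}^2 \leq \frac{\Cr{basicsmoothing2}}{c_1}\,\sigma_k(\Gamma,V_\Sigma)\|F\|_\Sigma^2.$$
The min-max principle for $\sigma_k(M)$ then yields $\sigma_k(M) \leq (\Cr{basicsmoothing2}/c_1)\,\sigma_k(\Gamma,V_\Sigma)$ in this regime.

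In the complementary regime $\sigma_k(\Gamma,V_\Sigma) \geq c$, the injectivity argument is unavailable, and I would rely instead on an a priori universal upper bound for $\sigma_k(M)$ when $k \leq |V_\Sigma|$. Such a bound follows from a Weyl-type estimate of Colbois--El Soufi--Girouard form, $\sigma_k(M) \leq C(k/\mathrm{Vol}_{n-1}(\Sigma))^{1/(n-1)}$, together with $|V_\Sigma| \leq C'\,\mathrm{Vol}_{n-1}(\Sigma)$ (from the $\epsilon$-separation and Bishop--Gromov applied on $\Sigma$): this gives $\sigma_k(M) \leq D$ for a constant $D$ depending only on $\kappa,r_0,n,\epsilon$. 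Then $\sigma_k(M) \leq D \leq (D/c)\,\sigma_k(\Gamma,V_\Sigma)$, and setting $B := \max(\Cr{basicsmoothing2}/c_1,\, D/c)$ concludes the proof. The main obstacle is the injectivity step: its proof relies crucially on Lemma \ref{lemma:SDvsDS}, which is precisely what allows the manifold-with-boundary situation to be reduced, on the level of the discretization identity $DS|_{V_\Sigma} = D^\Sigma S^\Sigma$, to the already understood closed-manifold smoothing estimate on $\Sigma$.
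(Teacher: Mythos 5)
Your overall structure coincides with the paper's: smooth the graph eigenfunctions, prove an injectivity estimate for $S$ on $E_k$ when $\sigma_k(\Gamma,V_\Sigma)$ is below a threshold, and fall back on an a priori upper bound for $\sigma_k(M)$ in the complementary regime. The injectivity step is also essentially the paper's argument: you decompose $f$ on $V_\Sigma$ as $DSf + (f-DSf)$, reduce to the closed manifold $\Sigma$ via Lemma \ref{lemma:SDvsDS}, and invoke the Chavel/Mantuano discretization--smoothing bounds $\|D^\Sigma F^\Sigma\|_{V_\Sigma}\le C\|F^\Sigma\|_\Sigma$ and $\|f-D^\Sigma S^\Sigma f^\Sigma\|_{V_\Sigma}^2\le C'q^\Sigma(f)\le C' q(f)$; this is exactly what the paper's Lemma \ref{lemma:smoothingInjective} does, citing points (8) and (10) of \cite{mantuano}.

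The genuine gap is in the second regime, $\sigma_k(\Gamma,V_\Sigma)\geq c$. You claim an a priori bound of the form $\sigma_k(M)\leq C\bigl(k/\mathrm{Vol}_{n-1}(\Sigma)\bigr)^{1/(n-1)}$ and attribute it to \cite{ceg2}, but the result you are gesturing at is Theorem 2 of \cite{ceg2}, namely $\sigma_k(\Omega)\,\mathrm{Vol}_{n-1}(\Sigma)^{1/(n-1)}\leq C(n)k^{2/n}$, which (a) holds for \emph{Euclidean} domains, not for the class $\mathcal{M}(\kappa,r_0,n)$, and (b) has exponent $k^{2/n}$, not $k^{1/(n-1)}$. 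Combined with $k\le|V_\Sigma|\le C'\mathrm{Vol}_{n-1}(\Sigma)$, the CEG exponent gives $\sigma_k\lesssim \mathrm{Vol}^{2/n-1/(n-1)}$, which is \emph{not} uniformly bounded for $n\geq3$ (the exponent $2/n-1/(n-1)=(n-2)/(n(n-1))$ is positive). So the citation does not deliver the uniform constant $D$ you need. The paper instead gives a direct, self-contained construction that is exactly adapted to $\mathcal{M}$: since $V_\Sigma$ is $\epsilon$-separated, one gets $k$ disjoint metric balls $B_1,\dots,B_k\subset\Sigma$ of radius $\epsilon$; on each, take a first Dirichlet eigenfunction $f_j$ and extend it to the cylindrical collar by $\phi_j(x,t)=f_j(x)(1-t)$, zero elsewhere. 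The disjoint supports make these admissible for the min-max characterization of $\sigma_k(M)$, and Cheng's comparison theorem bounds $\lambda_1(B_j)$ in terms of $n,\kappa,\epsilon$ only, giving $\sigma_k(M)\leq C+1$ uniformly. This is the argument you should use in place of the Weyl-type estimate; the rest of your write-up then goes through.
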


Let $f_1,\cdots,f_k$ be Steklov eigenfunctions on the graph
$(\Gamma,V_\Sigma)$. The space 
$$E_k:=\mbox{span}(Sf_1,\cdots,Sf_k)$$
will be used in the variational characterization of $\sigma_k(M)$. 
As in the smooth case, we show that this space is $k$-dimensional.
\begin{lemma}\label{lemma:smoothingInjective}
  There exists constants $\Cl{treshold2},
  \Cl{constantsmoothinjective}>0$ such that
   $\sigma_k(\Gamma,V_\Sigma)<\Cr{treshold2}$ implies
   $$\|Sf\|_\Sigma^2\geq\Cr{constantsmoothinjective}\|f\|_{V_\Sigma}^2\quad\forall f\in E_k$$
\end{lemma}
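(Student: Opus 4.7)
The plan is dual to the proof of Lemma \ref{lemma:discretizationInjective}: instead of showing that $SDF$ is close to $F$ in $L^2(\Sigma)$ and using the $L^2(\Sigma)$ continuity of $S$, I will show that $DSf$ is close to $f$ in $\ell^2(V_\Sigma)$ and use an $\ell^2(V_\Sigma)$ continuity estimate for $D$ applied to $Sf|_\Sigma$. For $f\in E_k$, the defect $\|f-DSf\|_{V_\Sigma}$ will be controlled by the Dirichlet energy $q(f)$, which in turn is controlled by $\sigma_k(\Gamma,V_\Sigma)\|f\|_{V_\Sigma}^2$, leaving the bound below.

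First, I would establish the continuity of $D$ on the boundary: there exists $K_1>0$ such that
\begin{equation*}
\|DG\|_{V_\Sigma}^2 \leq K_1 \|G\|_\Sigma^2 \qquad \forall\, G\in L^2(\Sigma).
\end{equation*}
This follows from Jensen's inequality applied to each average $(DG)(v)=\strokedint_{B_\Sigma(v,3\epsilon)}G$, together with the uniform positive lower bound on the boundary ball volumes (guaranteed by H3 and H5) and the bounded multiplicity of the cover $\{B_\Sigma(v,3\epsilon)\}_{v\in V_\Sigma}$ (from Bishop–Gromov and $\epsilon$-separation). Applied to $G=(Sf)|_\Sigma$ this gives $\|Sf\|_\Sigma^2\geq K_1^{-1}\|DSf\|_{V_\Sigma}^2$. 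Next, I would establish the discrete analogue of the bound $\|F-SDF\|_\Sigma\leq \Cr{SDF}\|\nabla F\|_\Sigma$ used in Lemma \ref{lemma:discretizationInjective}, namely
\begin{equation*}
\|f-DSf\|_{V_\Sigma}^2 \leq K_2\, q(f).
\end{equation*}
Here Lemma \ref{lemma:SDvsDS} is the crucial tool: for every $v\in V_\Sigma$ one has $(DSf)(v)=(D^\Sigma S^\Sigma(f|_{V_\Sigma}))(v)$, so the left-hand side equals $\|g-D^\Sigma S^\Sigma g\|_{V_\Sigma}^2$ with $g=f|_{V_\Sigma}$, and the computation takes place entirely on the closed manifold $\Sigma$. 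The standard arguments of \cite[Sec.~VI.5.2]{cha2} then bound this by $K_2\, q_{V_\Sigma}(g)\leq K_2\,q(f)$, the last inequality being trivial since $q_{V_\Sigma}$ counts a subset of the edges counted by $q$.

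Third, for $f\in E_k=\mbox{span}(f_1,\dots,f_k)$, the variational characterization of Definition~\ref{defi:steklovgraph} together with the usual orthogonality of Steklov eigenfunctions on $V_\Sigma$ yields $q(f)\leq \sigma_k(\Gamma,V_\Sigma)\|f\|_{V_\Sigma}^2$. Combining with the previous step gives $\|f-DSf\|_{V_\Sigma}\leq \sqrt{K_2\sigma_k(\Gamma,V_\Sigma)}\,\|f\|_{V_\Sigma}$, so by the triangle inequality
\begin{equation*}
\|DSf\|_{V_\Sigma}\geq \bigl(1-\sqrt{K_2\,\sigma_k(\Gamma,V_\Sigma)}\bigr)\|f\|_{V_\Sigma}.
\end{equation*}
Setting $\Cr{treshold2}=(4K_2)^{-1}$ makes the parenthesis at least $1/2$, and then the Step~1 estimate gives $\|Sf\|_\Sigma^2\geq (4K_1)^{-1}\|f\|_{V_\Sigma}^2$, so one may take $\Cr{constantsmoothinjective}=(4K_1)^{-1}$. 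The main obstacle is the discrete reverse estimate in the second paragraph: one must verify that the reduction via Lemma~\ref{lemma:SDvsDS} really eliminates all interior contributions so that Chavel's closed-manifold machinery on $\Sigma$ applies verbatim to $g=f|_{V_\Sigma}$; once this reduction is secured, the remaining ingredients are routine.
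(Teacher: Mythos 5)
Your proof follows essentially the same path as the paper's: both reduce everything to the closed manifold $\Sigma$ via the commutation relation of Lemma~\ref{lemma:SDvsDS} (so that $(Sf)|_\Sigma = S^\Sigma(f|_{V_\Sigma})$ and $(DSf)|_{V_\Sigma} = D^\Sigma S^\Sigma (f|_{V_\Sigma})$), then combine the $L^2$-continuity of the boundary discretization, the near-identity estimate $\|f - D^\Sigma S^\Sigma f^\Sigma\|_{V_\Sigma}$ controlled by $q^\Sigma(f)\le q(f)$, and the Rayleigh-quotient bound $q(f)\le\sigma_k(\Gamma,V_\Sigma)\|f\|_{V_\Sigma}^2$ to close the triangle inequality. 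The only cosmetic difference is that the paper cites points (8) and (10) of Mantuano for the two closed-manifold ingredients, whereas you sketch them directly from Jensen and from Chavel's Section VI.5.2; the underlying estimates are the same.
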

\begin{proof}[Proof of Lemma \ref{lemma:smoothingInjective}]

 Let $f=a_1f_1+...+a_kf_k$ and $F=\mathcal S f = a_1\mathcal S f_1+...+a_k \mathcal S f_k \in E_k$. 

\medskip
We denote by $F^{\Sigma}$ and $f^{\Sigma}$ the restriction of $F$ to
$\Sigma$ and of $f$ to $V_{\Sigma}$, respectively. Because we are working in the
closed manifold $\Sigma$, we can use point (8) from the paper
\cite{mantuano}, which states in our situation that  
$$
\Vert \mathcal D^{\Sigma}F^{\Sigma} \Vert_{V_{\Sigma}} \le C\Vert F^{\Sigma} \Vert_{\Sigma},
$$
for some constant $C$ depending only on $\kappa,n,r_0$.
It follows that
$$
\Vert F^{\Sigma}\Vert_{\Sigma} \ge \frac{1}{C} \Vert \mathcal D^{\Sigma} F^{\Sigma} \Vert_{V_{\Sigma}} \ge
\frac{1}{C}\left( \Vert f^{\Sigma}\Vert_{V_{\Sigma}}-\Vert f-D^{\Sigma}S^{\Sigma} f^{\Sigma}\Vert_{V_\Sigma}\right).
$$
Working on $\Sigma$ and using point (10) of \cite{mantuano}:
$$
\Vert f-D^{\Sigma}S^{\Sigma} f^{\Sigma}\Vert_{V_\Sigma} \le C'q^\Sigma(f)\leq C'q(f),
$$
for another constant $C'$. Here $q^\Sigma(f)$ is the energy of $f$ restricted to the boundary $V_\Sigma\subset V$.
Because $f\in\mbox{Span}(f_1,\cdots,f_k)$, we have
$$
\frac{q(f) }{\Vert f^{\Sigma}\Vert_{V_{\Sigma}}^2} \le \sigma_k(\Gamma,V_\Sigma).
$$
Setting $\Cr{treshold2}=\frac{1}{2C'}$, observe that
the inequality $\sigma_k(\Gamma,V_\Sigma) \le \Cr{treshold2}$ implies
$\|F\|_\Sigma^2\geq \frac{1}{2C}\|f\|_{V_\Sigma}^2$. The proof is
complete, with $\Cr{constantsmoothinjective}=\frac{1}{2C}$.
\end{proof}

We are know ready to finish the proof of the comparison inequality.

\begin{proof}[Proof of Proposition \ref{prop:up}]
In the situation that $\sigma_k(\Gamma,V_\Gamma)<\Cr{treshold2}$, it
follows from Lemma \ref{lemma:smoothingInjective} that the space
$E_k=\mbox{span}(Sf_1,\cdots,Sf_k)$ is $k$-dimensional. In combination
with Lemma \ref{lemma:basicsmoothing}, this lead for each $F\in E_k$ to
\begin{gather}\label{inequalityProof1}
  \sigma_k(M)\leq\frac{\|\nabla F\|^2}{\|F\|_M^2}
  \leq\Cr{basicsmoothing2}\Cr{constantsmoothinjective}^{-1}\frac{q(f)}{\|f\|_{V_\Sigma}}
  \leq \Cr{basicsmoothing2}\Cr{constantsmoothinjective}^{-1}\sigma_k(\Gamma,V_\Sigma).
\end{gather}
On the other hand, if $\sigma_k(\Gamma,V_\Gamma)\geq\Cr{treshold2}$, then one has
$$\sigma_k(M)\leq \Cr{treshold2}^{-1}\sigma_k(\Gamma,V_\Sigma)\sigma_k(M).$$
The proof is completed by giving a rough upper bound on $\sigma_k(M)$.
Because $k\leq|V_\Sigma|$ and $V_\Sigma\subset\Sigma$ is
$\epsilon$-separated,  there exists $k$ disjoint balls
$B_1,\dots,B_k\subset \Sigma$ of radius $\epsilon$.   Let $f_j$ be a first eigenfunction corresponding to the
first Dirichlet eigenvalue $\lambda_1(B_j)$. Each function
$f_j$ is extended to a function $\phi_j:B_j\times[0,1]\subset
M\rightarrow\R$ defined on this set by
$$\phi_j(x,t)=f_j(x)(1-t),$$
and then extended by 0 elsewhere in $M$.
It follows that
$$\frac{\|\nabla\phi_j\|_{M}^2}{\|f_j\|_\Sigma^2}\leq
\frac{\|\nabla^\Sigma
  f_j\|_\Sigma^2+\|f_j\|_\Sigma^2}{\|f_j\|_\Sigma^2}=\lambda_1(B_j)+1.$$
Now, Cheng's theorem \cite{cheng} states that $\lambda_1(B_j)\leq C,$ for some constant
$C=C(n,\kappa,\epsilon)$. 
As the functions $\phi_j$ are compactly supported in the disjoint
domains $B_j\times[0,1]$, the min-max characterization of $\sigma_k$
implies that $\sigma_k(M)\leq C+1.$
Hence
$$\sigma_k(M)\leq (C+1)\Cr{treshold2}^{-1}\sigma_k(\Gamma,V_\Sigma).$$
Together with Inequality \eqref{inequalityProof1}, this implies that
$$\sigma_k(M)\leq B\sigma_k(\Gamma,V_\Sigma),$$
for $B=\max\{(C+1)\Cr{treshold2}^{-1}, \Cr{basicsmoothing2}\Cr{constantsmoothinjective}^{-1}\}$.
\end{proof}

\section{Applications}
\label{section:applications}

In this section, we present the three applications which were mentionned in the introduction. They are similar to each others. The general strategy is as follows:
\begin{itemize}
\item Construct a sequence of graphs $\{G_l\}_{l\in\N}$ with some desired spectral property;
\item Construct a sequence of surfaces $\{\Omega_l\}_{l\in\N}$ associated to $G_l$;
\item Obtain an $\epsilon$-discretization $(\Gamma_l,B_l)$ of $\Omega_l$;
\item Prove that $\Gamma_l$ and $G_l$ are roughly isometric;
\item Conclude using Theorem \ref{theorem:MainSpectralComparison} and Proposition \ref{proposition:spectrumRoughlyIsomGraphs}.
\end{itemize}

\subsection{Spectral stability under quasi-isometries}
In order to compare the Steklov spectrum of our manifolds with the discrete Steklov spectrum of a discretization, it was necessary to suppose that a neighborhood of the boundary $\Sigma$ is isometric to the product $\Sigma \times [0,1[$. This is a strong hypothesis, which however can be relaxed to having a quasi-isometry with the product $\Sigma \times [0,1[$, with uniform control on constants.
 
\begin{prop}\label{proposition:quasiisocontrol}
 Let $M^n$ be a compact manifold with smooth boundary $\Sigma$ and let $g_1,g_2$ be two Riemannian metrics on $M$. Suppose the existence of a constant $A\ge 1$ such that for each $x\in M$ and $0\neq v \in T_xM$ we have
 $$
 \frac{1}{A} \le \frac{g_1(x)(v,v)}{g_2(x)(v,v)} \le A.
 $$
 Then the Steklov spectrum with respect to $g_1$ and $g_2$ satisfies the
 $$
 \frac{1}{A^{2n+1}}\le \frac{\sigma_k(M,g_1)}{\sigma_k(M,g_2)} \le A^{2n+1},
 $$
 and for the normalized Steklov spectrum, we have
 $$
 \frac{1}{A^{2n+2}}\le \frac{\tilde{\sigma}_k(M,g_1)}{\tilde{\sigma}_k(M,g_2)} \le A^{2n+2}.
 $$
\end{prop}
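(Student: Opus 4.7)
The strategy is a direct application of the min-max characterization of Steklov eigenvalues, tracking how each geometric ingredient in the Rayleigh quotient distorts under the pointwise comparison of metrics. Recall that
$$\sigma_k(M,g)=\min_{E}\max_{f\in E\setminus\{0\}}\frac{\int_M|\nabla^g f|^2_g\,dV_g}{\int_\Sigma f^2\,dA_g},$$
where the minimum ranges over $k$-dimensional subspaces $E\subset H^1(M)$ whose image under the trace operator is $k$-dimensional in $L^2(\Sigma)$. Since $H^1(M)$ and the trace map depend only on the smooth structure, the class of admissible subspaces is the same for $g_1$ and $g_2$, and it suffices to compare the three metric-dependent quantities: the gradient norm, the volume form on $M$, and the volume form on $\Sigma$.

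From the hypothesis $A^{-1}g_2\le g_1\le A g_2$, elementary linear algebra gives the dual comparison $A^{-1}g_2^{-1}\le g_1^{-1}\le A g_2^{-1}$ on cotangent vectors, and taking determinants yields
$$A^{-n/2}\,dV_{g_2}\le dV_{g_1}\le A^{n/2}\,dV_{g_2},\qquad A^{-(n-1)/2}\,dA_{g_2}\le dA_{g_1}\le A^{(n-1)/2}\,dA_{g_2}.$$
Combined with $|\nabla^{g_1}f|^2_{g_1}\le A\,|\nabla^{g_2}f|^2_{g_2}$, this gives for every $f\in H^1(M)$ the inequalities
$$\int_M|\nabla^{g_1}f|^2_{g_1}\,dV_{g_1}\le A^{(n+2)/2}\int_M|\nabla^{g_2}f|^2_{g_2}\,dV_{g_2},\qquad \int_\Sigma f^2\,dA_{g_1}\ge A^{-(n-1)/2}\int_\Sigma f^2\,dA_{g_2},$$
together with the symmetric lower bounds. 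Taking the ratio and using the min-max characterization on a fixed subspace $E$ yields
$$\sigma_k(M,g_1)\le A^{(2n+1)/2}\,\sigma_k(M,g_2),$$
and the opposite inequality follows by symmetry. The exponent $2n+1$ in the statement is a convenient non-sharp upper bound on $(2n+1)/2$, and so the first estimate is established.

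For the normalized spectrum $\tilde\sigma_k(M,g):=\sigma_k(M,g)\,|\Sigma|_g^{1/(n-1)}$, the boundary volume ratio is controlled by integrating the comparison on $dA$:
$$\left(\frac{|\Sigma|_{g_1}}{|\Sigma|_{g_2}}\right)^{1/(n-1)}\le A^{1/2},$$
which multiplies the eigenvalue bound by an extra factor of $A^{1/2}$, yielding the claimed $A^{2n+2}$ bound (again a comfortable upper bound on the sharper $A^{n+1}$). There is no genuine obstacle here: once the variational formulation is in place and one observes that the admissible subspaces are metric-independent, the proof reduces to bookkeeping the exponents arising from the Jacobian-type comparisons on $M$ and $\Sigma$. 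The one point that deserves care is ensuring that the comparison of gradient norms uses the inverse metric (since $|\nabla f|^2=g^{ij}\partial_if\,\partial_j f$), which reverses the direction of the inequality relative to the metric comparison itself.
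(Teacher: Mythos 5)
Your argument is exactly the one the paper intends: the paper's own proof is a single sentence appealing to the variational characterization (with a citation to Dodziuk), and you have simply carried out the bookkeeping it leaves implicit. Your exponent count $(2n+1)/2$ and $n+1$ is correct and in fact sharper than the stated $2n+1$ and $2n+2$, which, as you note, are comfortable overestimates; the observation that the admissible class of $k$-dimensional subspaces of $H^1(M)$ with $k$-dimensional trace is metric-independent, and that the gradient comparison must go through the inverse metric, is exactly the content the paper elides.
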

\begin{proof}
This follows directly from the variational characterization of the
Steklov eigenvalues $\sigma_k$. The earliest paper where this
principle was used extensively is that of Dodziuk \cite{dodziuk},
where the spectrum of the Laplace-Beltrami operator acting on forms is
studied. 
\end{proof}

\subsection{Planar domains with large eigenvalues}

The proof of Theorem \ref{theorem:planardomains} is based on gluing copies of three different planar building blocks.

\begin{defi}\label{defi:planfunpiece}
A \emph{planar fundamental piece} is a domain $D\subset\mathbb R^2$
bounded by smooth successive arcs
$\gamma_1,\Gamma_1,...,\gamma_{n-1},\Gamma_{n-1},\gamma_n=\gamma_1$
meeting orthogonaly (see Figure \ref{fig:flatcross}) such that
the arc $\Gamma_i$ is a straight segment of length $1$ and in a
neighbourhood of each $\Gamma_i$, $D$ is isometric to $\Gamma_i\times
[0,1]$ (a square of side $1$). 
\end{defi}

\begin{figure}[ht]
  \centering
  \includegraphics[width=6cm]{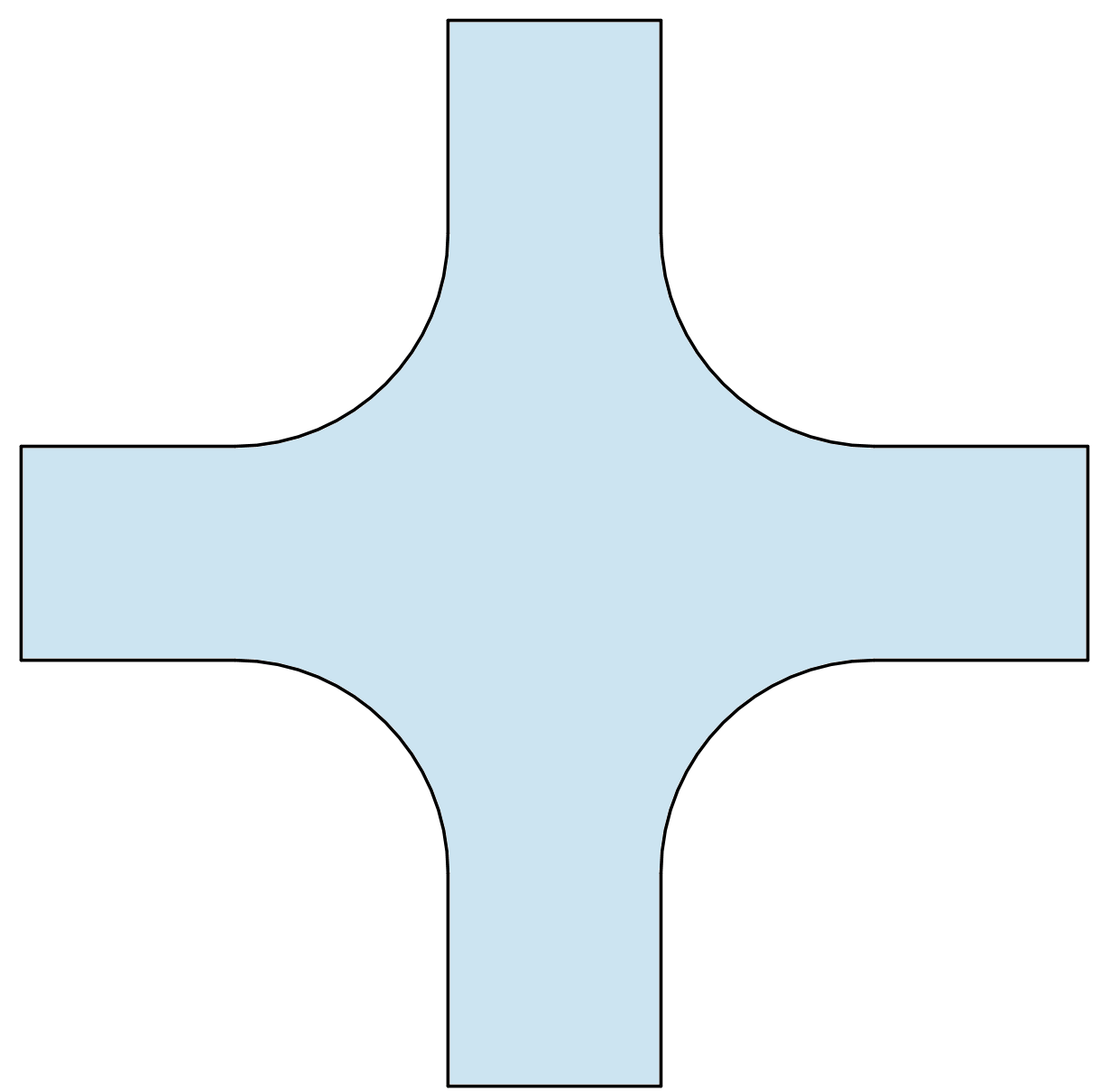}
  \caption{A flat cross domain.}
  \label{fig:flatcross}
\end{figure}

Let us denote by $N(\delta)$ the $\delta$-neighbourhood of $\gamma:=\cup_i\gamma_i$, that is 
$$N(\delta)=\{x\in D: d(x,\gamma)\le \delta\}.$$
\begin{lemma}\label{lemma:quasiproductfundpiece}
  Let $D\subset\R^2$ be a flat fundamental piece.
 Let $g_0$ be the euclidean metric. There exist numbers $K,\delta>0$ and a Riemannian metric $g$ on $D$ such that
\begin{itemize}
\item $(N(\delta),g)$ is isometric to $\gamma\times [0,1]$;
\item The Riemannian metrics $g_0$ and $g$ are quasi-isometric  with constant $K$;
\item The Riemannian metrics $g$ and $g_0$ are homothetic on $D\setminus N(3\delta)$ and on the square ends of $D$.
\end{itemize}

\end{lemma}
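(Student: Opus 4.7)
The proof proceeds by locally modifying the Euclidean metric in a tubular neighbourhood of each curved arc $\gamma_i$, flattening it into a product metric, while leaving the metric unchanged far from $\gamma$ and on the square ends. I would first choose $\delta > 0$ so small that, for each $i$, the Euclidean $3\delta$-tubular neighbourhood $T_i$ of $\gamma_i$ is embedded via the inward normal exponential map, so that Fermi coordinates $(s,t) \in \gamma_i \times [0, 3\delta]$ are defined on $T_i$. In these coordinates $g_0 = dt^2 + j_i(s,t)^2 ds^2$ with $j_i(\cdot, 0) \equiv 1$ and $j_i$ smooth and positive. The choice of $\delta$ must be controlled by the maxima of $|\ddot{\gamma}_i|$ and by the mutual distances between the arcs, so that two tubes $T_i, T_{i'}$ can only meet inside a square neighbourhood of some $\Gamma_j$; thanks to the orthogonality condition and the product structure $\Gamma_j \times [0,1]$ built into Definition~\ref{defi:planfunpiece}, the portions of $\gamma_i$ and $\gamma_{i+1}$ lying in such a square are straight line segments, so $j_i \equiv 1$ there.

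Next, pick a smooth cut-off $\chi \colon [0, 3\delta] \to [0,1]$ with $\chi \equiv 1$ on $[0, \delta]$ and $\chi \equiv 0$ on $[2\delta, 3\delta]$, and define on each tube $T_i$
$$
g := dt^2 + \bigl(\chi(t) + (1 - \chi(t))\, j_i(s,t)\bigr)^2 ds^2,
$$
extended by $g := g_0$ on the complement of $\bigcup_i T_i$. Smoothness of the glueing follows at once: on $\{t \geq 2\delta\} \subset T_i$ the formula reduces to $g_0$, while on the overlap of two tubes (contained in a square end) both relevant $j$'s are identically $1$, so that both local definitions again agree with $g_0$.

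The three asserted properties then follow essentially by inspection. On $N(\delta) = \bigcup_i \{t_i \le \delta\}$ the metric is $dt^2 + ds^2$, so each connected component is isometric to $\gamma_i \times [0, \delta]$; a final global rescaling of $g$ brings this into the normalized form $\gamma \times [0,1]$. Because $j_i$ is smooth and strictly positive on the compact set $\overline{T_i}$, with $j_i(\cdot, 0) \equiv 1$, there exist constants $0 < c \leq C$ (depending only on $\delta$ and on the geometry of $\gamma$) such that $c \leq \chi + (1 - \chi) j_i \leq C$ on each tube, whence $g$ and $g_0$ are quasi-isometric with constant $K = K(c, C)$. Finally $g = g_0$ on $D \setminus N(3\delta)$ and on the square ends by construction, which is a homothety of ratio $1$.

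The main obstacle is geometric rather than analytic: arranging in the first step that $\delta$ can be chosen small enough to simultaneously embed all Fermi tubes and to keep their overlaps confined to the flat square ends. This uses both the orthogonal meeting of each $\gamma_i$ with the adjacent $\Gamma_j$ and the product structure of $D$ near each $\Gamma_j$; without these assumptions the local modification could not be glued smoothly to $g_0$ on the complement, and the $j_i \equiv 1$ cancellation in the overlap regions would fail.
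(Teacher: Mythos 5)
Your proposal is correct and follows essentially the same route as the paper: Fermi coordinates along $\gamma$, a cutoff interpolation between the Euclidean and product metrics on a collar of width $3\delta$, extension by $g_0$ outside, and a final homothety to normalize the collar to $\gamma\times[0,1]$. The only cosmetic differences are that you interpolate the coefficient $j$ rather than $j^2=\phi$ as in the paper, and you are somewhat more explicit about why the modification glues smoothly near the square ends (where $j_i\equiv 1$), which the paper leaves implicit.
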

\begin{proof}
Let $s$ be the arclength parameter along $\gamma$. Using the distance
$t$ to $\gamma$ as a second parameter leads to the Fermi parallel
coordinates, which are defined in a neighborhood $\mathcal{O}\subset
D$ of $\gamma$. In this coordinate system, the eudlidean metric is
expressed by
$$g_0(s,t)=\phi(s,t)ds^2+dt^2,$$
where the smooth function $\phi$ satisfy $\phi(s,0)=1$ (Gauss
Lemma). Let $\delta>0$ be such that the restriction of $\phi$ to
$N(3\delta)$ is smaller than 2. On this neighborhood the euclidean
metric $g_0$ is quasi-isometric with ratio 2 to the product metric
$g'$, which in Fermi coordinates is expressed by $g'(s,t)=ds^2+dt^2$.  
Let $\chi:[0,3\delta] \rightarrow [0,1]$ be a smooth increasing function taking the value $0$ on $[0,\delta]$ and the value $1$ on $[2\delta,3\delta]$. Using the Fermi coordinates again, define a new metric on $N(3\delta)$ by
$$g_\delta(s,t)=\chi(t) g_0(s,t)+(1-\chi(t))g'(s,t).$$
On $N(\delta)$ this metric coincide with the product metric $g'$, while on 
$N(3\delta)\setminus N(2\delta)$ it coincides with the euclidean metric $g_0$. It can therefore be extented to a metric (still denoted $g_\delta$) which is defined on the full domain $D$.

Note that on the square ends of the fundamental piece $D$, one has $g_\delta=g'=g_0$. 	
In order to obtain a cylindrical boundary of length one, define the metric
$$g=\frac{1}{\delta^2}g_\delta.$$
This metric satisfy all the required condition.
Indeed it is $2$ quasi-isometric to $\frac{1}{\delta^2} g_0$ by construction, and equal to
$\frac{1}{\delta^2} g_0$ on $D\setminus N(3\delta)$ and on the square ends of $D$.
\end{proof}

Let $D_1,\cdots,D_m$ be planar fundamental pieces. Let $\Omega\subset\R^2$ be a planar domain of the form
$$\Omega=\mbox{interior }\overline{\dot{\bigcup}_{i=1}^n\Omega_i}$$ 
where each $\Omega_i$ is one of the fundamental piece, with the external boundary $\gamma$ of each piece included in the boundary $\partial\Omega$. We call such a domain a \emph{puzzle domain} (See Figure \ref{figure:puzzle}).

\begin{cor}\label{coro:puzzlequasi}
 Let $D_1,\cdots,D_m$ be planar fundamental pieces. There exist
 constant $K,\kappa,r_0>0$ such that any puzzle domain $\Omega$ based
 on $D_1,\cdots,D_m$ is $K$-quasi-isometric to a Riemannian surface
 $(\Omega',g)$ in the class $\mathcal{M}(\kappa,r_0,2)$. 
\end{cor}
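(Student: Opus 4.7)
The plan is to apply Lemma~\ref{lemma:quasiproductfundpiece} simultaneously to each of the finitely many model pieces $D_1,\ldots,D_m$. For each $D_i$ the lemma produces parameters $\delta_i,K_i>0$ and a modified metric; I would fix $\delta=\min_i\delta_i$ and $K=\max_i K_i$ so that all $m$ constructions can be carried out with one common $\delta$. This yields a Riemannian metric $g_i$ on each $D_i$ which is a product near the external arcs $\gamma_i$ and coincides with the rescaled Euclidean metric $\delta^{-2}g_0$ on $D_i\setminus N(3\delta)$, in particular on each square end.

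Next, I would assemble these local metrics into a global metric $g$ on the puzzle domain $\Omega$. The key observation is that on every square end the metric $g_i$ is the fixed model $\delta^{-2}g_0$, independent of the piece; since the gluings in the puzzle construction identify square ends via Euclidean isometries of these unit squares, the $g_i$ patch together smoothly across every interface. The resulting Riemannian surface $(\Omega',g)$ (with $\Omega'=\Omega$ as a set) is therefore well defined, and pointwise $g$ is comparable to $\delta^{-2}g_0$, hence to $g_0$, with a ratio controlled only by $K$. Thus the identity map $(\Omega,g_0)\to(\Omega',g)$ is a $K$-quasi-isometry in the sense of Proposition~\ref{proposition:quasiisocontrol}, with a constant depending only on the finite family $\{D_1,\ldots,D_m\}$.

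It remains to verify $(\Omega',g)\in\mathcal{M}(\kappa,r_0,2)$. Condition (H1) holds because the $\delta$-neighbourhood of each $\gamma_i$ in $(D_i,g_i)$ is isometric to $\gamma_i\times[0,1]$, and these collars match across gluings to form a global product neighbourhood of $\partial\Omega'=\bigcup_i\gamma_i$. Conditions (H2)--(H3) follow because the Gauss curvature of $g$ is supported inside the transition annuli $N(3\delta)\setminus N(\delta)$ of the finitely many model pieces and is therefore uniformly bounded, and similarly for the induced curvature on the boundary. The injectivity radius bounds (H4)--(H5) are determined by the geometry of the finitely many $D_i$, so $\kappa$ and $r_0$ can be chosen depending only on the model family.

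The main obstacle I anticipate is precisely this uniformity: guaranteeing that the same $K,\kappa,r_0$ work for \emph{every} puzzle domain, regardless of how many pieces are used or how they are arranged combinatorially. This rests on the strictly local nature of the construction---each modification of $g_0$ is performed inside a single piece $D_i$ within $N(3\delta)$ of its external boundary---together with the rigidity of the square-end regions, which are all literal copies of $[0,1]\times[0,1]$ rescaled by the common factor $\delta^{-2}$. Because no deformation propagates across the interfaces and only finitely many model geometries appear, all quantitative estimates from Lemma~\ref{lemma:quasiproductfundpiece} transfer to the global surface with uniform constants.
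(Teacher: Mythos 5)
Your proposal is correct and follows essentially the same approach as the paper, which dispatches the corollary in one line by invoking Lemma~\ref{lemma:quasiproductfundpiece} and the finiteness of the model family. You have usefully spelled out the two points the paper leaves implicit: (i) a single $\delta=\min_i\delta_i$ works for all pieces because shrinking $\delta$ only makes the constraint $\phi\le 2$ on $N(3\delta)$ easier to satisfy, and (ii) the modified metrics agree on the square ends (all equal to $\delta^{-2}g_0$), so they glue smoothly and the product collars join up into a global collar isometric to $\Sigma\times[0,1]$. One small remark: in dimension $2$ condition (H3) is vacuous since a one-dimensional boundary has identically zero Ricci curvature, so no curvature estimate on $\Sigma$ is actually needed.
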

\begin{proof}
  This follows from Lemma \ref{lemma:quasiproductfundpiece} and the
      fact that a finite number of fundamental pieces are used.
\end{proof}

\begin{remark}
One way to think of $\Omega_l$ intuitively is that it is a
"thickening" of the graph $G_l$ (perceived as a subset of the usual
lattice $\Z^2\subset\R^2$), that is by considering a tubular
neighbourhood of the corresponding lattice graph. Nevertheless, it is
easier to analyse the situation precisely when we build $\Omega_l$ by
gluing fundamental pieces.
\end{remark}

We are now ready to proceed with the proof of our first application.
\begin{proof}[Proof of Theorem \ref{theorem:planardomains}.]
For each $l\in\N$, consider the graph $G_l=(V_l,E_l)$ defined through the usual lattice $\Z^2\subset\R^2$ as follows:
$$
V_l= \{(a,b) \in \Z^2 \subset \R^2: \ 0\le a,b \le l \}
$$
and declare the vertices $(a_1,b_1)$ and $ (a_2,b_2)$ to be adjacent if they are related in the lattice $\mathbb Z^2$.
In order to obtain a graph with boundary, a distinguished subset of
vertices has to be chosen. For the present application, chose
$B_l=V_l$.  In this very special situation where the boundary coincide
with the full graph, the spectrum of the discrete Steklov problem on
coincide with the spectrum of the combinatorial Laplacian of the graph
(See Definition \ref{defi:steklovgraph} and Remark
\ref{remark:fullvertices}).  

\begin{figure}
  \includegraphics[width=6cm]{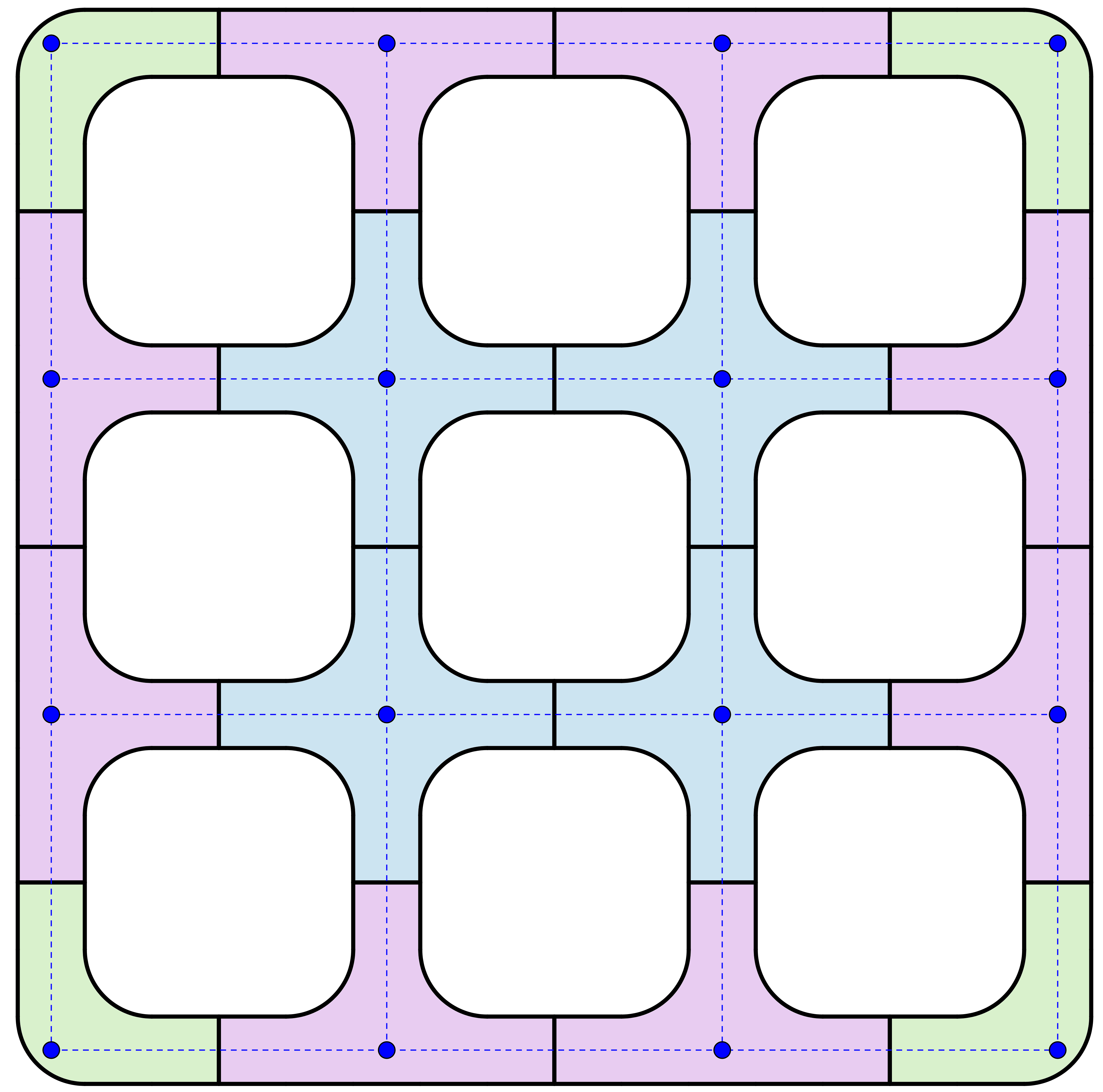}
  \caption{A puzzle domain with three fundamental pieces.}
  \label{figure:puzzle}
\end{figure}

Vertices of $G_l$ are either of degree 2,3 or 4. To each of these
degree, their correspond one planar fundamental piece: $D_2, D_3$ and
$D_4$, as illustrated in Figure \ref{figure:puzzle}. For each $l\in\N$, these are used to construct a puzzle domain $\Omega_l\subset\R^2$ as follows: 
\begin{itemize}
\item To each vertex $v\in V_l$, a congruent copy $\Omega_v$ of the fundamental piece $D_{\mbox{deg}v}$ is attached; \item If $v,w\in V$ are adjacent in the graph $G_l=(V_l,E_l)$ then $\Omega_v$ and $\Omega_w$ are placed so as to touch along a common straight $\Gamma$-segment of their boundary. 
\end{itemize}
It follows from Corollary \ref{coro:puzzlequasi} that there
exists a constant $K>0$ independant of $l\in\N$ such that each domain
$\Omega_l\subset\R^2$ is quasi-isometric (in the Riemannian sense)
with constant $K$ to a Riemannian surface $(M_l,g_l)$ in the class
$\mathcal{M}(\kappa,r_0,2)$ for some constants $\kappa,r_0>0$ which
are also independant of $l$. Moreover, it is clear from the
construction that $(M_l,g)$ is roughly isometric to the graph $G_l$,
with uniform constants $a,b,\tau$.

Let $\epsilon\in (0,r_0/4)$ and consider an $\epsilon$-discretization
$(\Gamma(M_l),V_{\Sigma}(M_l))$  of the surface $M_l$. This graph is
roughly-isometric to $M_l$ by
Corollary\ref{lemma:discretizationsRoughlyIso}.

\medskip
We are now in position to conclude.
It follows from Proposition \ref{proposition:spectrumRoughlyIsomGraphs} that there exists a constant $A_1\ge 1$ not depending on $l$ such that
$$
\frac{1}{A_1} \le \frac{\sigma_2(G_l)}{\sigma_2(\Gamma_l)} \le A_1.
$$
From our main comparison result (Theorem \ref{theorem:MainSpectralComparison}) we get a constant $A_2 \ge 1$ not depending on $l$ such that
$$
\frac{1}{A_2} \le \frac{\sigma_2(\Omega_l)}{\sigma_2(\Gamma_l)} \le A_2.
$$
As for the graph $G_l$ we have $B_l=V_l$, we have
$\sigma_2(G_l)=\lambda_2(G_l)$. It is well known that the first
non-zero eigenvalue of the the discrete Laplacian on this so called
lattice graph is $8\sin^2(\pi/l)$ (see \cite{mohar}). In particular, this implies the existence of a constant $A_3$ such that
$$
\sigma_2(G_l)=\lambda_2(G_l)\ge \frac{A_3}{l^2}
$$
The conclusion is that there exists a constant $C$ depending on $A,A_1,A_2,A_3$ but not depending on $l$ such that
$$
\sigma_2 (\Omega_l) \ge \frac{C}{l^2}.
$$

Now it is obvious by construction that there exists $B_1,B_2>0$ with
$$B_1 l^2 \le\mbox{Vol}(\Omega_l) \le B_2 l^2,\qquad B_1l^2 \le L(\Sigma_l)\le B_2 l^2.$$
This implies first that
$$
\sigma_2 (\Omega_l) L(\Sigma_l) \ge B_1C.
$$
and that
$$
I(\Omega_l) = \frac{L(\Sigma_l)}{\mbox{Vol}(\Omega_l)^{1/2}}\ge \frac{B_1}{\sqrt B_2} l.
$$
So that $I(\Omega_l) \to \infty$ as $l\to \infty$ and $\sigma_2 (\Omega_l) L(\Sigma_l)\ge B_1C$.
\end{proof}

\subsection{Flat surfaces}
The proof of the second application is similar.

\begin{proof}[Proof of Theorem \ref{theorem:LargeLambdaOneFlat}]

It follows from the classical probabilistic method that there exist
a an expanding family $\{G_l\}_{l\in\N}$ of $4$-regular graphs such
that the number of vertices $|V(G_l)|=l$ (See~\cite{pinsker}).
In particular, 
$\lim_{l\rightarrow\infty}|V(G_l)|=+\infty$ and 
$\lambda_1(G_l)$ is uniformly bounded below by a positive
constant $c$.  In order to obtain a surface $\Omega_l$ from the graph
$G_l$, a "flat cross domain" is used (see Figure
\ref{fig:flatcross}). This domain is a fundamental planar piece in the
sense of Definition \ref{defi:planfunpiece}.The construction of
$\Omega_l$ is similar to that used for the previous application, but
it is simpler since only one building block is required: a flat cross
$\Omega_v$ is associated to each vertex $v\in V(G_l)$, and the graph
structure is used to glue them together. It follows exactly as in
Corollary \ref{coro:puzzlequasi} that the surface $\Omega_l$ is
$K$-quasi-isometric to a surface $M_l$ in some class
$\mathcal{M}(\kappa,r_0,2)$ for some constants $K,\kappa,r_0$ which
are independant of $l\in\N$.

Given $\epsilon\in (0,r_0/4)$, let $(\Gamma_{M_l},V_{\Sigma_l})$ be an $\epsilon$-discretization of $M_l$. Lemma \ref{lemma:discretizationsRoughlyIso} says that $M_l$ is roughly-isometric to $(\Gamma_{M_l},V_{\Sigma_l})$ for some constants $a,b,\tau$ independant of $l\in\N$.

The end of the proof is exactly as that of the previous Theorem \ref{theorem:planardomains}.
\end{proof}

\subsection{Surfaces with large eigenvalues and connected boundary}
The basic idea behind the proof of Theorem \ref{theorem:LargeLambdaOneConnectedBdr} is similar to the two previous examples and to the construction of \cite{cg1}, but some care has to be taken to garantee the presence of only one boundary component. 
Intuitively, we proceed as follows: consider a family $G_l$ of expander graphs of degree 4 and genus $N+1$ as in the previous paragraph, and use it to construct a sequence $S_l$ of closed surfaces, using exactly one building block of the form given in Figure \ref{fig:fundamentalpiece}.
\begin{figure}[ht]
  \centering
  \includegraphics[width=4cm]{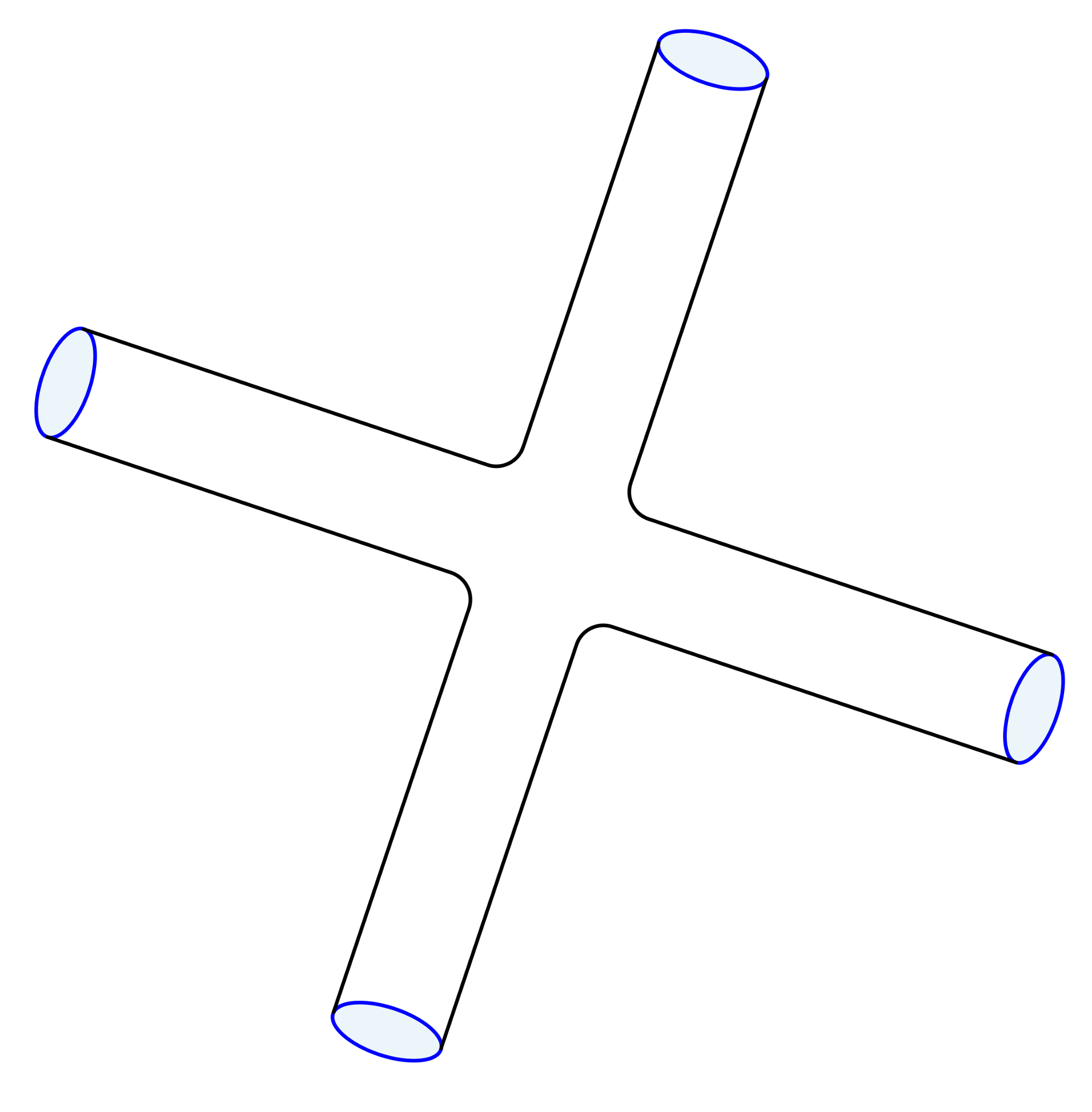}
  \caption{The fundamental piece $M_0$ for $k=4$}
  \label{fig:fundamentalpiece}
\end{figure}
Consider a \emph{maximal tree} on the graph $G_l=(V_l,E_l)$. One would like to embedd this tree in the surface $S_l$ and remove a neighborhood from $O_l$ the closed surface $S_l$. Because this neighborhood is a topological ball in $S_l$, the surface $\Omega_l:=S_l\setminus O_l$ has exactly one boundary component $\Sigma_l$, which is spread out in the surface: it visits each and every fundamental piece that was used to construct $S_l$, and its length is proportional to $l$. Nevertheless, it is not easy to embedd the graph $G_l$ in the closed surface $S_l$ while controlling a quasi-isometry class for $O_l$.  To get around this diffulty, 
4 different fundamental pieces will be used (See Figure \ref{fig:carvedpieces}). These fundamental pieces already carry part of the boundary of $\Omega_l$, which is built using the maximal tree and the five pieces which correspond to the degree of each vertex in the tree. What we have gained through this construction is that each fundamental piece has a fixed geometry, and this implies the existence of a quasi-isometry from a neighborhood of the boundary $\Sigma_l$ to $\Sigma_l\times [0,1)$ with constant not depending on $N$.
The rest of the proof is essentialy the same as those of the two previous examples.

\begin{proof}[Proof of Theorem~\ref{theorem:LargeLambdaOneConnectedBdr}]
Let $M_0$ be the smooth surface of genus $0$ with $4$ boundary
components illustrated in Figure~\ref{fig:fundamentalpiece}.
Each boundary component $B_1,\cdots, B_4$ has a neighbourhood
which is isometric to the cylinder $[0,1]\times S^1$, with 
boundary corresponding to $\{0\}\times S^1$. Moreover, the surface
$M_0$ is symmetric with respect to rotations of $90$ degrees.
From $M_0$ we build 4 new surfaces by carving out smooth curves as
illustrated in figure \ref{fig:carvedpieces}. 
\begin{figure}[ht]
  \centering
  \includegraphics[width=4cm]{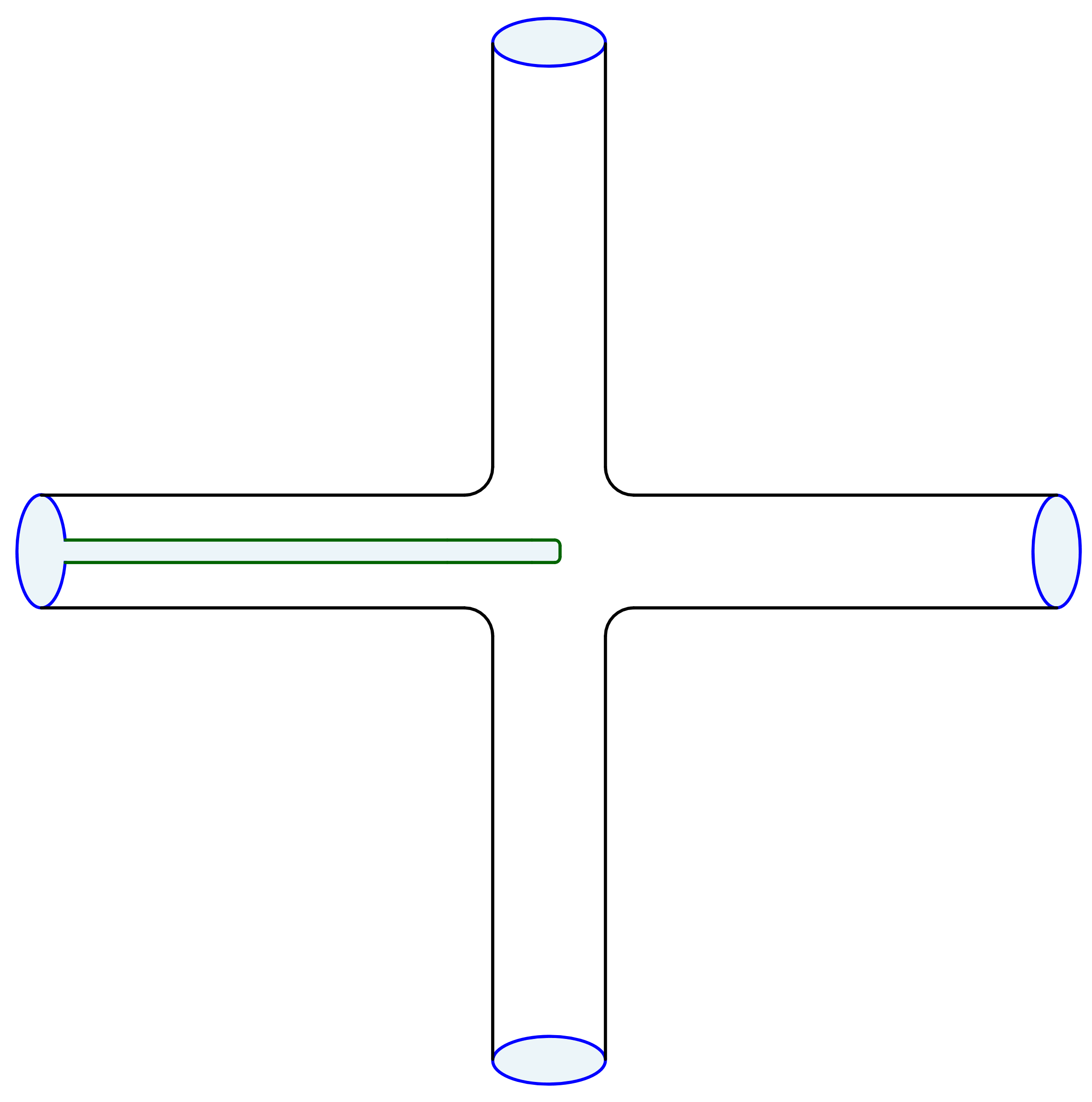}
  \includegraphics[width=4cm]{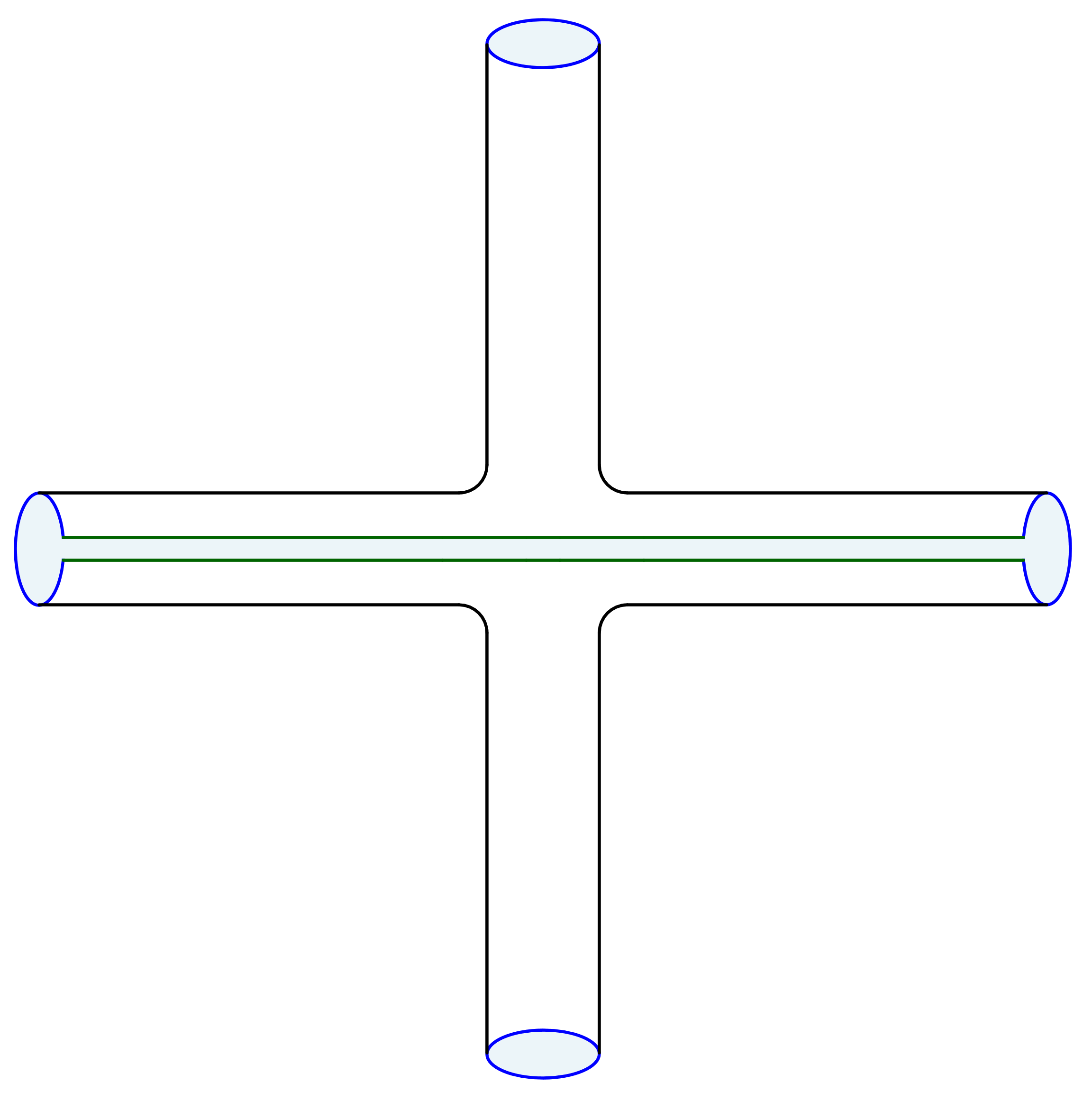}
  \includegraphics[width=4cm]{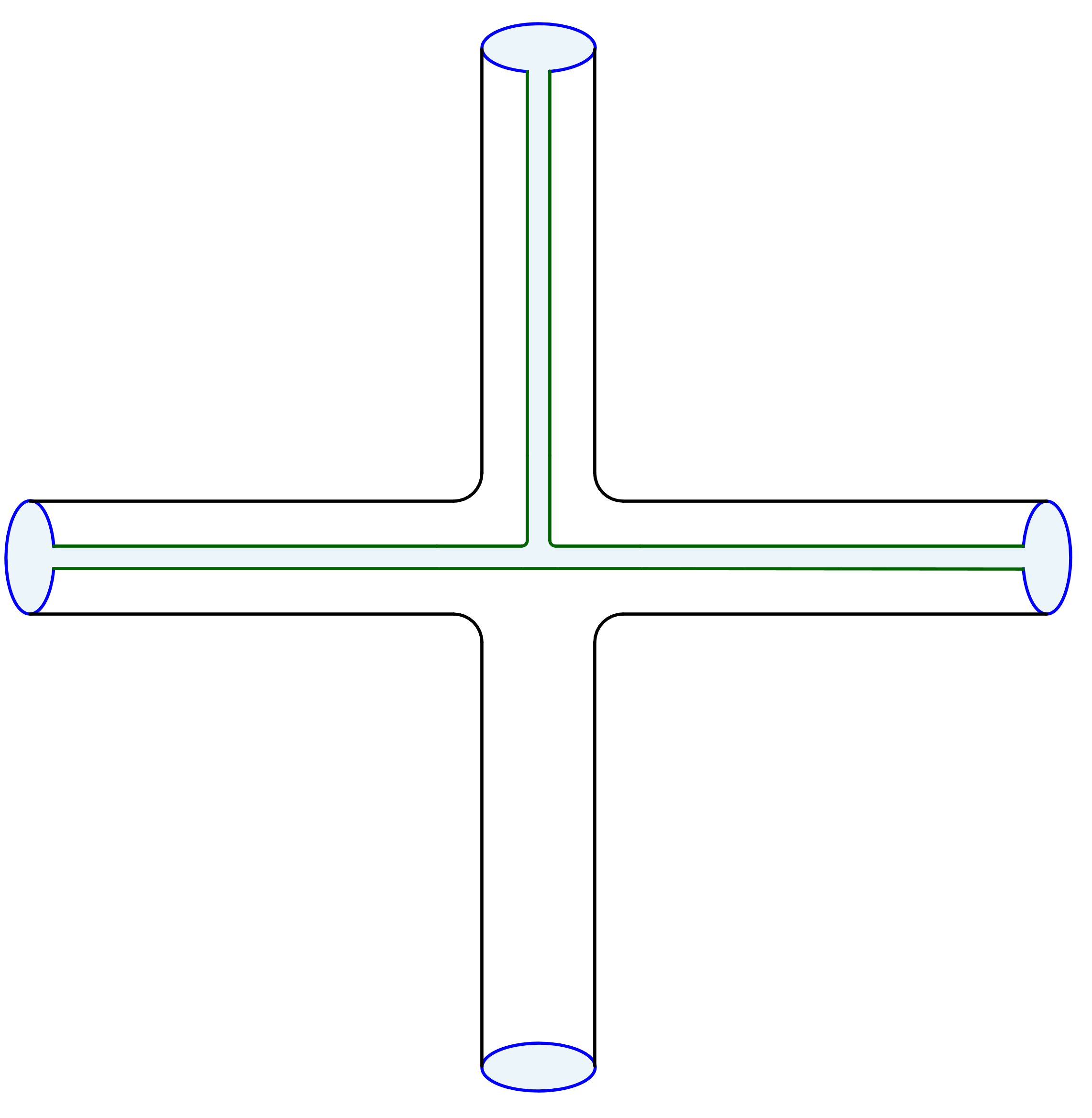}
  \includegraphics[width=4cm]{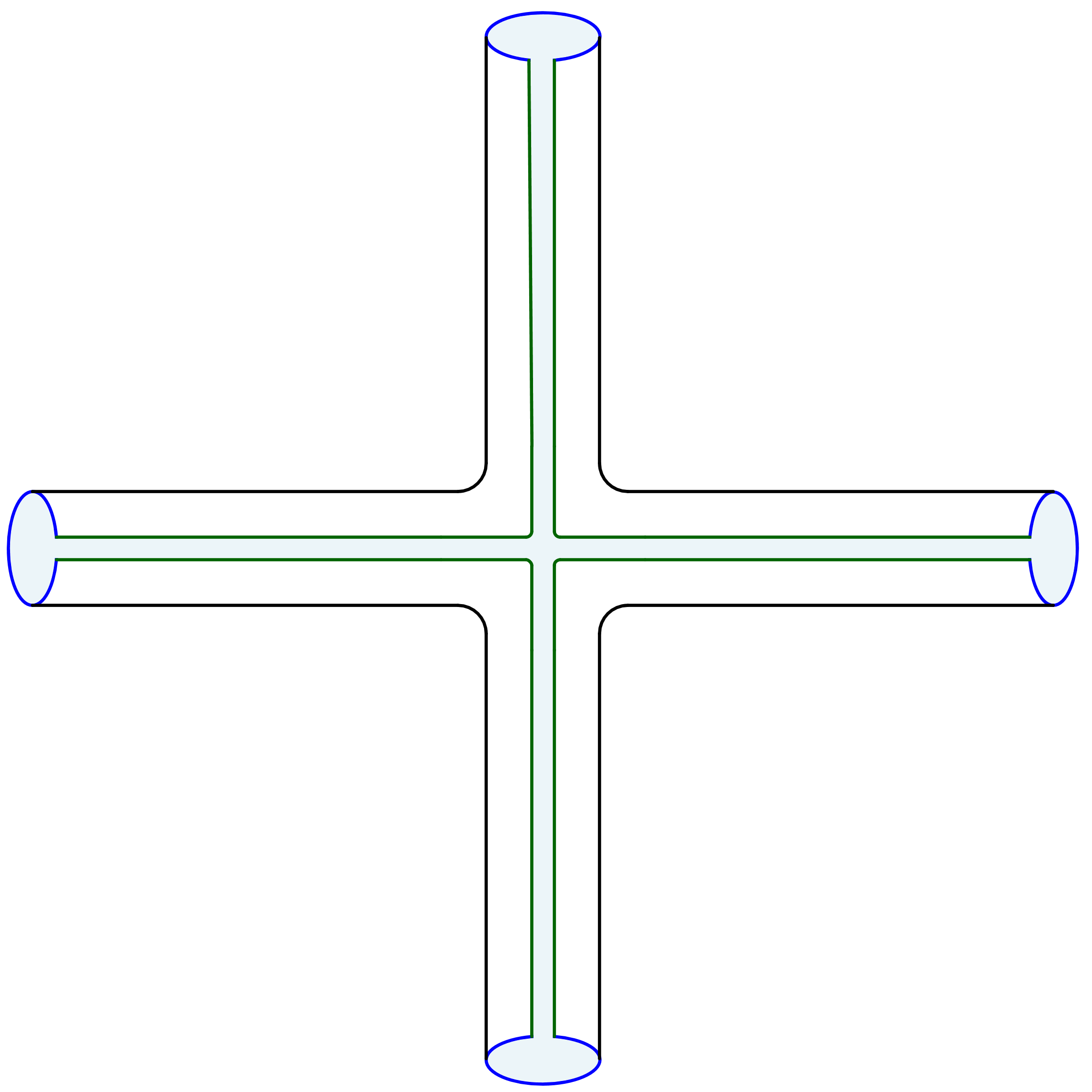}
  \caption{Carved fundamental pieces}
  \label{fig:carvedpieces}
\end{figure}
These surfaces are such that they match together seamlessly, so that they
can be used like the basic building blocks of a puzzle.
Let $G=(V,E)$ be a  finite connected regular graph of degree $4$.
Let $T$ be a maximal tree in $G$. 
To each vertex $v\in V$, a copy $M_v$ of one of the four building blocks is associated,
according to the degree of $v$ in the maximal tree $T$. 
These blocks are then glued together to obtain a surface
$\Omega_{G,T}$, with one boundary component
$\Sigma_{G,T}$. 
It is clear from this construction that each $\Omega_{G,T}$ is $K$-quasi-isometric to a surface $M_{G,T}$ in the class $\mathcal{M}(\kappa,r_0,2)$ for some constants $K,\kappa,r_0$ which are independant of the graph $G$ and maximal tree $T$. See Lemma \ref{lemma:quasiproductfundpiece} for details in the planar case.

Exactly as in the previous two applications, it follows from Proposition \ref{lemma:discretizationsRoughlyIso} that any $\epsilon$-discretization $(\Gamma_{M_G},V_{\Sigma_{G,T}})$ of $M_{G,T}$ is roughly isometric to $M_{G,T}$, with constants $a,b,\tau$ independent of the graph $G$ and of the maximal tree $T$. Moreover, the discretization is roughly isometric to the original graph $(G,T)$. 

The rest of the proof is also exactly as that of the previous Theorem \ref{theorem:LargeLambdaOneFlat}, using an expander sequence of graphs $G_l$ and applying the spectral comparison Proposition \ref{proposition:spectrumRoughlyIsomGraphs} and Theorem \ref{theorem:MainSpectralComparison}.
\end{proof}

\begin{remark}
 In all construction presented above, a choice is involved when we glue various building blocks together: we did not specify which boundary component of one fundamental piece is glued to the other. In the first construction (planar domains), there was no ambiguity, since we are using congruent copies of the fundamental pieces in the plane. For the last two applications, the choice involved does not affect the end result, despite the surface not being uniquely defined by the procedure. One way to resolve this non-uniqueness is to label the edges emanating from each vertex and label the gluing boundaries. See \cite{cg1} for more details.
 \end{remark}
 
\bibliographystyle{plain}
\bibliography{biblioBCG}

\end{document}